\titleformat{\section}{\large\bfseries\center\raggedright}{\thesection}{0.5em}{{#1}}[]
\titleformat{\subsection}{\bfseries\center\raggedright}{\thesubsection}{0.5em}{{#1}}[]
\titleformat{\subsubsection}[runin]{\bfseries}{\thesubsubsection}{0.5em}{{#1}}[.]
\titlespacing*{\section}{0pt}{0.8\baselineskip}{0.6\baselineskip}
\titlespacing*{\subsection}{0pt}{0.6\baselineskip}{0.4\baselineskip}
\titlespacing*{\subsubsection}{0pt}{0.4\baselineskip}{0.4\baselineskip}
\newcommand\namefont{\normalfont\bfseries}
\newcommand\numberfont{\normalfont\bfseries}
\newcommand\notefont{\normalfont\bfseries}
\newtheoremstyle{mystyle} 
	{0.3em} 
	{0.3em} 
	{\itshape} 
	{} 
	{\normalfont} 
	{.} 
	{.5em} 
	{{\namefont\thmname{#1}}~{\numberfont\thmnumber{#2}}{\notefont\thmnote{ (#3)}}} 
\theoremstyle{plain}
\newtheorem{thm}{Theorem}[section]
\newtheorem{rem}[thm]{Remark}
\newtheorem{lem}[thm]{Lemma}
\newtheorem{defn}[thm]{Definition}
\newtheorem{notn}[thm]{Notation}
\newtheoremstyle{namedthmstyle} 
        {} 
	{} 
	{\itshape} 
	{} 
	{\bfseries} 
	{} 
	{ } 
        {}
\theoremstyle{namedthmstyle}
\newcommand{\thistheoremname}{}
\newtheorem*{genericthm}{\thistheoremname}
\newenvironment{namedthm}[1]
{\renewcommand{\thistheoremname}{#1}%
  \begin{genericthm}}
  {\end{genericthm}}
\def\namedlabel#1#2{\begingroup
   \def\@currentlabel{#2}%
   \label{#1}\endgroup
}
\renewcommand{\leq}{\leqslant}
\renewcommand{\geq}{\geqslant}
\newcommand{\ul}[1]{\underline{\smash{#1}}}
\definecolor{darkred}{rgb}{0.6,0.1,0.1}
\definecolor{darkgreen}{rgb}{0.1,0.6,0.1}
\definecolor{darkblue}{rgb}{0.1,0.1,0.6}
\newcommand{\be}{\begin{equation}}
\newcommand{\ee}{\end{equation}}
\newcommand{\bes}{\begin{equation*}}
\newcommand{\ees}{\end{equation*}}
\newcommand{\bfig}{\begin{figure}}
\newcommand{\efig}{\end{figure}}
\newcommand{\bt}{\begin{table}}
\newcommand{\et}{\end{table}}
\newcommand{\bc}{\begin{center}}
\newcommand{\ec}{\end{center}}
\newcommand{\mt}[1]{\mathrm{#1}}
\def\st{\, \left|\right. \,}
\def\:{\colon}
\newcommand{\overbar}[1]{\mkern 1.7mu\overline{\mkern-3.7mu#1\mkern-1.7mu}\mkern 1.7mu} 
\newcommand{\abs}[1]{\left\vert#1\right\vert}
\newcommand{\ap}[1]{\left\langle#1\right\rangle}
\newcommand{\norm}[1]{\left\Vert#1\right\Vert}
\def\grad{\nabla}
\def\conn{\nabla}
\DeclareMathOperator{\dist}{dist}
\DeclareMathOperator{\supp}{supp}
\DeclareMathOperator{\Diam}{diam}
\DeclareMathOperator{\Hess}{Hess}
\def\R{\mathbb{R}} 
\def\P{\mathcal{P}} 
\def\M{M} 
\def\Cont{\mt{C}} 
\def\Lip{\mt{Lip}} 
\def\K{\mathcal{K}}
\def\e{\varepsilon}
\def\d{\,\mathrm{d}}
\def\XXint#1#2#3{{\setbox0=\hbox{$#1{#2#3}{\int}$ }
	\vcenter{\hbox{$#2#3$ }}\kern-.6\wd0}}
\def\der{\mathrm{d}}
\def\p{\partial}
\def\bd{\boldsymbol{W}}
\def\V{\mathcal{V}} 
\def\dim{n}
\def\V{v}
\def\PsiX{\Psi^t_X}
\def\PsiY{\Psi^t_Y}
\def\Lip{\Lambda} 
\def\diam{\Delta}
\def\iM{\mathrm{inj}(\M)} 
\def\cM{\mathrm{conv}(\M)}
\begin{document}

\title{Well-posedness of an interaction model on Riemannian manifolds}

\author{Razvan C. Fetecau\footnote{Department of Mathematics, Simon Fraser University, Burnaby, BC V5A 1S6, Canada.\newline \indent\indent {\protect\url{razvan_fetecau@sfu.ca}}} \and Francesco S. Patacchini\footnote{IFP Energies nouvelles, 1-4 avenue de Bois-Préau, 92852 Rueil-Malmaison, France.\newline \indent\indent {\protect\url{francesco.patacchini@ifpen.fr}}}}

\date{\today}

\maketitle


\begin{abstract}
  We investigate a model for collective behaviour with intrinsic interactions on smooth Riemannian manifolds. For regular interaction potentials, we establish the local well-posedness of measure-valued solutions defined via optimal mass transport. We also extend our result to the global well-posedness of solutions for manifolds with nonpositive bounded sectional curvature. The core concept underlying the proofs is that of Lipschitz continuous vector fields in the sense of parallel transport.
\end{abstract}

{\small
\textbf{Keywords}: intrinsic interactions, measure solutions, swarming on manifolds, parallel transport, Lipschitz continuity

\smallskip
\textbf{AMS Subject Classification}: 35A01, 35B40, 37C05, 58J90
}


\section{Introduction}
\label{sect:intro}
This paper is concerned with the following integro-differential equation for the evolution of a population density $\rho$ on a Riemannian manifold $\M$: 
\begin{equation}
 \label{eqn:model}
\partial_t \rho-\nabla_M \cdot(\rho \nabla_\M K\ast\rho)=0,
\end{equation}
where $K\: \M\times \M \to \R$ is an interaction (also known as aggregation) potential which models social interactions such as attraction and repulsion, and $\nabla_\M \cdot$ and $\nabla_\M $ represent the manifold divergence and gradient, respectively. In \eqref{eqn:model} the symbol $\ast$ denotes a measure convolution: for a time-dependent measure $\rho_t$ on $\M$ and $x\in\M$ we set
\begin{equation} \label{eqn:conv}
	K * \rho_t(x) = \int_M K(x,y) \d \rho_t(y).
\end{equation}
We restrict our solutions $\rho$ to be probability measures on $\M$ at all times: $\int_M  \d \rho_t=1$ for all $t$.

Model \eqref{eqn:model} has numerous applications in swarming and self-organized behaviour in biology \cite{M&K}, material science \cite{CaMcVi2006}, robotics \cite{Gazi:Passino,JiEgerstedt2007}, and social sciences \cite{MotschTadmor2014}. In such applications, equation \eqref{eqn:model} can model interactions between biological organisms such as insects, birds or cells, as well as interactions between robots or even opinions. Concerning the theory, the mathematical analysis of solutions to model \eqref{eqn:model} has focused almost exclusively on the model set up on Euclidean space $\R^\dim$; we refer in this case to \cite{BodnarVelasquez2, BertozziLaurent, Figalli_etal2011, BeLaRo2011} for the well-posedness of the initial-value problem and to \cite{LeToBe2009, FeRa10, BertozziCarilloLaurent, FeHuKo11,FeHu13} for the long-time behaviour of solutions.

The goal of the paper is to establish the well-posedness of measure-valued solutions to model \eqref{eqn:model} set up on general Riemannian manifolds. There are only very few works on this subject, and all require that the manifold be embedded in a larger Euclidean space. For instance, in \cite{WuSlepcev2015,CarrilloSlepcevWu2016,PaSl2021}, the authors investigate the well-posedness of the aggregation model \eqref{eqn:model} when the interactions are {\em extrinsic}, in the sense that the interaction potential depends on the embedding Euclidean distance between points. Another class of models considers {\em intrinsic} interactions, where the interaction potential depends on the geodesic distance on $\M$ between points. Recent works in this direction investigate the well-posedness of model \eqref{eqn:model} with intrinsic interactions on the sphere \cite{FePaPa2020} and on the special orthogonal group $SO(3)$ \cite{FeHaPa2021}. In both \cite{FePaPa2020} and \cite{FeHaPa2021}, however, certain calculations make use of the ambient vector spaces of the manifolds, i.e., in $\R^\dim$ and $\R^{3 \times 3}$, respectively. 

This paper presents a fully intrinsic approach to the well-posedness of solutions to \eqref{eqn:model} on Riemannian manifolds, which does not require any extrinsic calculations in an ambient vector space. To this aim we use the concept of Lipschitz continuity of vector fields via parallel transport. While this is a concept widely used in the literature on optimization on manifolds \cite{OliveiraFerreira2020, FeLoPr2019}, it is much less common in the analysis of differential equations on manifolds. Lipschitz continuity by parallel transport enables us to compare tangent vectors in an intrinsic manner. This approach fundamentally distinguishes itself from that in  \cite{FePaPa2020} and \cite{FeHaPa2021}, where tangent vectors at different points on the manifold are compared in the norm of an ambient vector space. 

As a further motivation for this paper we also mention various studies on the long-time behaviour of solutions to \eqref{eqn:model} on manifolds. An interesting collection of equilibria on the sphere and the hyperbolic plane for model \eqref{eqn:model} with intrinsic interactions can be found in \cite{FeZh2019}. These equilibria show a very rich pattern formation behaviour (e.g., disks, annuli, rings), similar to what has been observed in $\R^\dim$ \cite{KoSuUmBe2011, Brecht_etal2011}. In addition, emergent behaviour has been studied extensively in the related Lohe-type models with extrinsic interactions on the unit sphere, matrix manifolds and tensor spaces in \cite{HaKoRy2017,HaKoRy2018,HaPark2020}. In these works the focus is to investigate the formation of consensus solutions, where the equilibria consist of an aggregation at a single point, rather than the well-posedness of \eqref{eqn:model} in the intrinsic setting.

The paper is structured as follows. We first review in Section \ref{sect:prelim} the concept of parallel transport on a Riemannian manifold and that of Lipschitz continuity by parallel transport, and present useful relations in terms of Hessians and flow maps. In Section \ref{sect:interaction-eq} we give the rigorous notion of solution to equation \eqref{eqn:model} which will be used in the well-posedness theory, as well as introduce the $1$-Wasserstein probability space, which determines the regularity of the solutions we seek. Then, in Section \ref{sect:local-wp} we present our proof of local and global well-posedness for equation \eqref{eqn:model}, starting with the fundamental lemmas underlying the proof. In this section, our results on the global well-posedness are restricted to manifolds with nonpositive bounded sectional curvature. Finally, in Section \ref{sect:add-results} we state two additional results which can be obtained by simply adapting results already proven for the sphere in \cite{FePaPa2020}. Appendices \ref{app:terminology} and \ref{app:prel} give, respectively, basic notions of differential geometry useful to our purposes, and the proofs to preliminary results stated in the main body of the paper.

Everywhere in this paper, $\M$ denotes a manifold satisfying the assumption that follows. We shall only refer to it in some of the main statements; anywhere else, it will be implicitly assumed.  
\begin{namedthm}{(M)}
\namedlabel{hyp:M}{{\bfseries \upshape (M)}}
  $\M$ is a complete, simply connected, smooth Riemannian manifold of finite dimension $n$, with positive injectivity radius. We denote its intrinsic distance by $d$ and sectional curvature by $\K$.
\end{namedthm}


\section{Preliminaries on parallel transport and Lipschitz continuity}
\label{sect:prelim}

In Appendix \ref{app:terminology} we review some basic concepts and terminology from differential geometry that are relevant to the present work, and also introduce some notation. We invite the reader to look there whenever unfamiliar with some of the concepts or notation used in the main body of the paper. In particular, in Appendix \ref{app:terminology} we briefly discuss the logarithm map, normal and totally normal neighbourhoods, the injectivity radius, geodesics, convex sets, normal charts and the push-forward, as well as recall some of their properties important to our analysis; we also give a useful relationship between the Euclidean norm and the intrinsic distance on $\M$. 

Anywhere in the paper, for $x \in \M$ we write $\ap{\cdot,\cdot}_x$ and $\norm{\cdot}_x$ the tangent inner product and norm, respectively, on $T_x\M$, the tangent space of $\M$ at $x$. The tangent bundle of $\M$ is denoted by $TM$.

In this section, we present some background on Lipschitz continuity by parallel transport and its link to Hessians, as well as give the Cauchy--Lipschitz theorem for flow maps on manifolds.

\subsection{Lipschitz continuity via parallel transport}
\label{subsect:Lip-pp}
Given a curve $\gamma\:[0,1] \to \M$ and $v\in T_{\gamma(0)}\M$, the \emph{parallel transport} of $v$ along $\gamma$ is given by the unique solution $X\:[0,1] \to T\M$ with $X(t) \in T_{\gamma(t)} \M$ for all $t\in [0,1]$ to the ODE
\bes
	\begin{cases} \conn_{\gamma'(t)} X(t) = 0,\\ X(0) = v, \end{cases}
\ees
where $\conn_{\gamma'} X$ denotes the covariant derivative of $X$ along $\gamma$ and $\conn$ is the Levi-Civita connection on $\M$. We denote this solution by $\Pi_{\gamma,t}v$ for all $t\in[0,1]$. Given $t\in[0,1]$, the map $v \mapsto \Pi_{\gamma,t}v$ is a linear isometry from $T_{\gamma(0)}\M$ to $T_{\gamma(t)}\M$, i.e.,
\bes
	\ap{\Pi_{\gamma,t}v,\Pi_{\gamma,t}w}_{\gamma(t)} = \ap{v,w}_{\gamma(0)}, \qquad \mbox{for all $v,w\in T_{\gamma(0)}\M$}.
\ees

The situation of interest in this paper is when $\gamma\:[0,1] \to \M$ is the \emph{unique} minimizing geodesic connecting a point $x$ to a point $y$. In this case we write $\Pi_{xy}$ for $\Pi_{\gamma,1}$. It holds that $\Pi_{xy}^{-1} = \Pi_{yx}$, which means that taking the parallel transport of a vector $v\in T_x\M$ to $T_y\M$ and then back to $T_x\M$ returns the same vector $v$.

A \emph{vector field} on a set $U\subset\M$ is a function $X\:U \to T\M$ such that $X(x) \in T_x\M$ for all $x\in U$. Using parallel transport, we can give the following definition of Lipschitz continuous vector fields. We use in our setup a totally normal neighbourhood of $\M$, as any two points in such a set can be connected by a unique minimizing geodesic (though not necessarily lying entirely in it).
\begin{defn}[Lipschitz continuity via parallel transport]
\label{defn:Lip-parallel}
	Suppose that $U$ is a totally normal neighbourhood of $\M$ and let $X$ be a vector field on $U$. We say that $X$ is \emph{locally Lipschitz continuous} if for all compact sets $Q\subset U$ there exists a constant $L_Q>0$ such that
	\be\label{eqn:Lip-vf}
		\norm{X(x) - \Pi_{yx}X(y)}_x \leq L_Q\, d(x,y), \qquad \mbox{for all $x,y\in Q$};
	\ee
	we write $\norm{X}_{\mathrm{Lip}(Q)}$ for the smallest such constant. We say that $X$ is \emph{(globally) Lipschitz continuous} if there exists $L>0$ such that \eqref{eqn:Lip-vf} holds for all $x,y \in U$ by replacing $L_Q$ with $L$; we write $\norm{X}_{\mathrm{Lip}(U)}$ for the smallest such $L$.
\end{defn}
We note that a locally Lipschitz continuous vector field $X$ on a totally normal neighbourhood $U$ is also locally bounded in the sense that for all $Q\subset U$ compact there exists a constant $C_Q>0$ such that $\norm{X(x)}_x \leq C_Q$ for all $x\in Q$; we denote by $\norm{X}_{L^\infty(Q)}$ the smallest such constant. 

We also note that this definition of Lipschitz continuity is chart-free. The notion of Lipschitz continuity on charts is the standard concept used to show the well-posedness of flow maps via the Cauchy--Lipschitz theorem in the theory of dynamical systems on manifolds \cite{Irwin2001,FePaPa2020}. For completeness, we recall the definition here: a vector field $X$ on an open subset $U$ of $\M$ is \emph{locally Lipschitz continuous on charts} if for every chart $(V,\varphi)$ of $\M$ and compact set $Q\subset U \cap V$, there exists a constant $L_{\varphi,Q}>0$ such that
\be
\label{eqn:Lip-vf-charts}
\norm{\varphi_*X(x) - \varphi_*X(y)}_{\R^\dim} \leq L_{\varphi,Q}\, \norm{\varphi(x)-\varphi(y)}_{\R^\dim}, \qquad \mbox{for all $x,y\in Q$}.
\ee
The link between local Lipschitz continuity in the sense of Definition \ref{defn:Lip-parallel} and that on charts is given in the following lemma, which, in short, states that Lipschitz continuity in the sense of parallel transport implies Lipschitz continuity on \emph{normal} charts.

\begin{lem}
  \label{lem:pf-Lip} 
  Let $x\in\M$ and set $\delta\leq r_{\mt{conv}}(x)$. Suppose that $X$ is a vector field on the geodesic ball $B_\delta(x)$ which is locally Lipschitz continuous. Let moreover $(V,\varphi)$ be a normal chart generated by $x$. Then, for any $Q\subset B_\delta(x) \cap V$ compact there exists $L_{\varphi,Q}>0$ such that \eqref{eqn:Lip-vf-charts} holds; furthermore, the constant $L_{\varphi,Q}$ depends linearly on $\norm{X}_{L^\infty(Q)}$ and $\norm{X}_{\mt{Lip}(Q)}$.
\end{lem}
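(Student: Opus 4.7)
The plan is to decompose the difference of pushforwards via a parallel-transport intermediate, so that the parallel-transport Lipschitz hypothesis on $X$ controls the piece where the vector field varies, while the smoothness of $\varphi$ and of parallel transport itself controls the piece where the base point varies. Fix $p, q \in Q$. Since $\delta \leq r_{\mt{conv}}(x)$, the ball $B_\delta(x)$ is strongly convex, so a unique minimizing geodesic joins $q$ to $p$ inside $B_\delta(x)$ and $\Pi_{qp}\colon T_q\M \to T_p\M$ is well-defined. I would write
\begin{equation*}
\varphi_* X(p) - \varphi_* X(q) = D\varphi_p\bigl[X(p) - \Pi_{qp}X(q)\bigr] + \bigl[D\varphi_p\Pi_{qp} - D\varphi_q\bigr]X(q),
\end{equation*}
and estimate the two summands separately in $\R^\dim$.

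The first summand is immediate. Since $\varphi$ is a diffeomorphism and $Q$ is compact, the operator norm of $D\varphi_z$ (from $T_z\M$ with its Riemannian inner product to $\R^\dim$ with the Euclidean one) is bounded uniformly on $Q$ by some $C_1 = C_1(\varphi, Q)$, so Definition \ref{defn:Lip-parallel} gives
\begin{equation*}
\bigl\|D\varphi_p\bigl[X(p) - \Pi_{qp}X(q)\bigr]\bigr\|_{\R^\dim} \leq C_1 \|X\|_{\mt{Lip}(Q)}\, d(p, q).
\end{equation*}

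The second summand is the substantive step and yields the contribution linear in $\|X\|_{L^\infty(Q)}$. Let $\gamma\colon[0,1]\to B_\delta(x)$ be the minimizing geodesic from $q$ to $p$, parameterized so that $\|\dot\gamma(t)\|_{\gamma(t)} = d(p,q)$, and consider the smooth $\R^\dim$-valued curve $P(t) := D\varphi_{\gamma(t)}\Pi_{q,\gamma(t)}X(q)$, so that the summand equals $P(1) - P(0)$. Expressed in the chart coordinates, $\Pi_{q,\gamma(t)}X(q)$ satisfies the linear parallel-transport ODE driven by the Christoffel symbols of $\varphi$ along $\gamma$; these symbols are smooth and hence bounded on $Q$, the Euclidean velocity of $\varphi\circ\gamma$ is bounded by $d(p,q)$ times a constant coming from the uniform equivalence of the Riemannian metric and the Euclidean chart metric on $\varphi(Q)$, and $\|\Pi_{q,\gamma(t)}X(q)\|_{\gamma(t)} = \|X(q)\|_q$ because parallel transport is an isometry. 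Combining these facts yields $|\dot P(t)|_{\R^\dim} \leq C_2 \|X(q)\|_q\, d(p, q)$ for some $C_2 = C_2(\varphi, Q)$, and integrating over $[0,1]$ produces
\begin{equation*}
\bigl\|[D\varphi_p\Pi_{qp} - D\varphi_q]X(q)\bigr\|_{\R^\dim} \leq C_2\, \|X\|_{L^\infty(Q)}\, d(p, q).
\end{equation*}
This is the step I expect to require the most care, as it asks for a single constant depending only on $\varphi$ and $Q$ that absorbs Christoffel-symbol bounds, the Riemannian-to-Euclidean metric comparison, and the uniform geodesic-speed control.

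Finally, the standard comparison $d(p, q) \leq C_3 \|\varphi(p) - \varphi(q)\|_{\R^\dim}$ on compact subsets of a normal chart (recalled in Appendix \ref{app:terminology}) converts the intrinsic distance to the Euclidean one. Combining the three bounds gives \eqref{eqn:Lip-vf-charts} with
\begin{equation*}
L_{\varphi, Q} = C_3\bigl(C_1\, \|X\|_{\mt{Lip}(Q)} + C_2\, \|X\|_{L^\infty(Q)}\bigr),
\end{equation*}
which is linear in both $\|X\|_{L^\infty(Q)}$ and $\|X\|_{\mt{Lip}(Q)}$ as claimed.
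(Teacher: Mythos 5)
Your proposal is correct and follows essentially the same route as the paper's proof: both split the difference into a piece controlled by the parallel-transport Lipschitz constant $\norm{X}_{\mt{Lip}(Q)}$ and a piece measuring the discrepancy between the chart differential (equivalently, the coordinate frame) and parallel transport, bound the latter by integrating the Christoffel-symbol ODE along the connecting geodesic to get a constant times $\norm{X}_{L^\infty(Q)}\,d(p,q)$, and finish with the comparison \eqref{eqn:lemA-3} on the convex image $\varphi(B_\delta(x)) = B_\delta(0)$; your differentiation of $P(t)=D\varphi_{\gamma(t)}\Pi_{q,\gamma(t)}X(q)$ is just a repackaging of the paper's estimate of the frame terms $f_{ij}$ via Lemma \ref{lem:Lip-connection}. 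The one small repair is that the Christoffel-symbol and metric-equivalence constants must be taken on a compact subset of $B_\delta(x)$ containing \emph{all} geodesics joining points of $Q$ (these stay in $B_\delta(x)$ by convexity and sweep out a compact set), not literally on $Q$ itself, which is the same implicit enlargement the paper makes.
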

\begin{proof}
  See Appendix \ref{appendix:pf-Lip}.
\end{proof}

\subsection{Hessians}
\label{subsect:hess}

Given a vector field $X$ on an open subset $U$ of $\M$, a point $x\in U$ and a nonzero tangent vector $v\in T_x\M$, the \emph{derivative of $X$ at $x$ in direction $v$} is given, whenever the limit below exists, by 
\begin{equation}
\label{eqn:cov-der}
	\conn_v X(x) = \frac{\der}{\der t} \Bigl \vert_{t=0} \Pi_{x\gamma(t)}^{-1} X(\gamma(t)) = \lim_{t \to 0} \frac{\Pi_{x\gamma(t)}^{-1} X(\gamma(t))- X(x)}{t},
\end{equation}
where $\gamma$ denotes the geodesic starting at $x$ with velocity $v$ defined at any $t$ small enough for $\exp_x(tv)$ to exist. When indeed the limit above exists, we have $\conn_v X(x) \in T_x\M$ and we say that $X$ is \emph{differentiable at $x$ in direction $v$}. If $X$ is differentiable at $x$ in every direction, the map $\conn X(x) \: T_x\M \to T_x\M$ is linear and is called the \emph{derivative of $X$ at $x$}. If $X$ is differentiable at all points in $U$ in every direction, then we simply say that $X$ is \emph{differentiable} and the map $U\ni x \mapsto \conn X(x)$ is simply referred to as the \emph{derivative of $X$}.

For a differentiable function $f\:U \to \R$ such that the vector field $\grad f$ is differentiable at $x$ in direction $v$, we write $\Hess_vf(x)$ for the \emph{Hessian of $f$ at $x$ in direction $v$}, defined by
\begin{equation*}
  \Hess_vf(x) = \conn_v (\grad f)(x).
\end{equation*}
If $\grad f$ is differentiable at $x$ in every direction, the map $\Hess f(x)\: T_x\M \to T_x\M$ is linear and is referred to as the \emph{Hessian operator} of $f$ at $x$. If furthermore $\grad f$ is differentiable at all points in $U$ in every direction, then we simply say that $f$ has a \emph{Hessian} and the map $U\ni x \mapsto \Hess f(x)$ is called the \emph{Hessian of $f$}. Similar to the Euclidean case, the lemma below links bounded Hessians to Lipschitz continuity in the parallel transport sense \cite{FeLoPr2019}:
\begin{lem}
\label{lemma:bHessian-local}
Let $U\subset \M$ be a totally normal neighbourhood and let $f\:U \to \R$ have a Hessian. If the gradient of $f$ is locally Lipschitz continuous, then its Hessian is locally bounded, i.e., for all $Q\subset U$ compact there is $C_Q>0$ such that
\begin{equation*}
\norm{\Hess_v f(x)}_x \leq C_Q\norm{v}_x, \qquad \text{for all $x \in Q$ and $v\in T_x\M$}.
\end{equation*}
If furthermore $U$ is geodesically convex, then the converse is also true.
\end{lem}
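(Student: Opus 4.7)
The plan is to prove the two implications separately. For the forward direction I will apply the parallel-transport Lipschitz condition directly along short geodesics and pass to the limit in the very definition of the Hessian given in \eqref{eqn:cov-der}. For the converse I will use geodesic convexity to invoke a fundamental-theorem-of-calculus argument along minimizing geodesics, expressing the gradient increment (after parallel transport) as an integral of Hessian values.

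\textbf{Forward direction.} Assume $\grad f$ is locally Lipschitz and fix $Q\subset U$ compact. Using compactness of $Q$ and openness of $U$, I will first enlarge $Q$ to a slightly larger compact set $\widetilde Q\subset U$ so that, for every $x\in Q$ and every unit $v\in T_x\M$, the geodesic $\gamma(t)=\exp_x(tv)$ remains in $\widetilde Q$ for $t\in[0,t_0]$, with $t_0>0$ uniform. Applying \eqref{eqn:Lip-vf} to the pair $(x,\gamma(t))$ with $L_{\widetilde Q}=\norm{\grad f}_{\mathrm{Lip}(\widetilde Q)}$ and dividing by $t$ gives
\bes
	\left\Vert \frac{\Pi_{x\gamma(t)}^{-1}\grad f(\gamma(t))-\grad f(x)}{t} \right\Vert_{x} \leq L_{\widetilde Q}\,\frac{d(x,\gamma(t))}{t} = L_{\widetilde Q}\,\norm{v}_{x},
\ees
where I used that $d(x,\gamma(t))=t\norm{v}_x$ for the geodesic of speed $\norm{v}_x$. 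Passing to the limit $t\to 0$ in the definition \eqref{eqn:cov-der} yields $\norm{\Hess_v f(x)}_x\leq L_{\widetilde Q}\norm{v}_x$, and scaling to nonunit $v$ gives the claim with $C_Q=L_{\widetilde Q}$.

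\textbf{Converse direction.} Suppose in addition that $U$ is geodesically convex and that $\Hess f$ is locally bounded. For a compact set $Q\subset U$, geodesic convexity guarantees that the unique minimizing geodesic $\gamma$ connecting any two $x,y\in Q$ lies in $U$; by continuity of the exponential map, the union of the images of all such geodesics is contained in some compact $\widetilde Q\subset U$, on which $\norm{\Hess_v f(\cdot)}\leq C_{\widetilde Q}\norm{v}$. Given $x,y\in Q$, parametrize the minimizing geodesic $\gamma\:[0,1]\to\M$ from $x$ to $y$, so that $\norm{\gamma'(t)}_{\gamma(t)}=d(x,y)$, and define
\bes
	V(t) = \Pi_{\gamma,t}^{-1} \grad f(\gamma(t)) \in T_x\M.
\ees
A direct computation using the commutation of $\Pi_{\gamma,t}^{-1}$ with covariant differentiation along $\gamma$ gives $V'(t)=\Pi_{\gamma,t}^{-1}\Hess_{\gamma'(t)} f(\gamma(t))$. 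Since $\Pi_{\gamma,1}=\Pi_{xy}$, we have $V(0)=\grad f(x)$ and $V(1)=\Pi_{yx}\grad f(y)$, so
\bes
	\norm{\grad f(x) - \Pi_{yx}\grad f(y)}_x = \left\Vert \int_0^1 V'(t)\,\der t \right\Vert_x \leq \int_0^1 \norm{\Hess_{\gamma'(t)}f(\gamma(t))}_{\gamma(t)}\,\der t \leq C_{\widetilde Q}\,d(x,y),
\ees
where I used that parallel transport is a linear isometry and $\norm{\gamma'(t)}_{\gamma(t)}=d(x,y)$. This is the desired Lipschitz estimate with constant $C_{\widetilde Q}$.

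\textbf{Main obstacle.} The nontrivial points are not the estimates themselves but the compactness bookkeeping: enlarging $Q$ to a compact $\widetilde Q\subset U$ on which the relevant Lipschitz constant or Hessian bound is available, and, in the converse direction, checking that the family of minimizing geodesics between points of $Q$ indeed has relatively compact image inside $U$. The latter follows from geodesic convexity of $U$ together with continuity of the inverse exponential map on a totally normal neighbourhood, so once these observations are in place both directions are short.
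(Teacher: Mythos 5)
Your proof is correct and takes essentially the same route as the paper: the forward direction is identical (divide the parallel-transport Lipschitz bound along a short geodesic by $t$ and pass to the limit in \eqref{eqn:cov-der}), and your converse is the same integrate-the-Hessian-along-the-connecting-geodesic argument that the paper isolates as Lemma \ref{lem:Lip-connection}, only phrased via the commutation of parallel transport with covariant differentiation instead of an orthonormal parallel frame and Jensen's inequality. Your compactness bookkeeping (continuity of $(x,y,t)\mapsto\exp_x(t\log_x y)$ on $Q\times Q\times[0,1]$) is a fine, even slightly more explicit, substitute for the paper's choice of a compact convex $\widetilde Q$.
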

\begin{proof}
See Appendix \ref{appendix:bHessian-local}.
\end{proof}

We directly have the following global version of Lemma \ref{lemma:bHessian-local}:
\begin{lem}
\label{lemma:bHessian}
Let $U\subset \M$ be a totally normal neighbourhood and let $f\:U \to \R$ have a Hessian. If the gradient of $f$ is Lipschitz continuous, then its Hessian is bounded, i.e., there is $C>0$ such that
\begin{equation}
\label{eqn:hess-bdd}
\norm{\Hess_v f(x)}_x \leq C\norm{v}_x, \qquad \text{for all $x \in U$ and $v\in T_x\M$}.
\end{equation}
If furthermore $U$ is geodesically convex, then the converse is also true.
\end{lem}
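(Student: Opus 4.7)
The plan is to mirror the two-directional argument behind the local Lemma \ref{lemma:bHessian-local}, carrying a single global constant through the computation instead of compact-set dependent ones. Both directions reduce to a calculation along a single geodesic, exploiting the fact that parallel transport is a linear isometry that intertwines the covariant derivative with ordinary differentiation in a fixed tangent space.

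For the forward implication, I would fix $x \in U$ and a nonzero $v \in T_x\M$, set $\gamma(t) = \exp_x(tv)$ for $t$ small, and apply \eqref{eqn:cov-der}. The isometry of $\Pi_{\gamma(t)x}$ lets me rewrite
\bes
\norm{\Pi_{\gamma(t)x}\grad f(\gamma(t)) - \grad f(x)}_x = \norm{\grad f(\gamma(t)) - \Pi_{x\gamma(t)}\grad f(x)}_{\gamma(t)},
\ees
and the global Lipschitz hypothesis on $\grad f$ bounds the right-hand side by $L\,d(x,\gamma(t)) = Lt\norm{v}_x$ for $t$ small enough. Dividing by $t$ and sending $t \to 0$ produces \eqref{eqn:hess-bdd} with $C = L$.

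For the converse, geodesic convexity of $U$ lets me join any $x, y \in U$ by a unique minimizing geodesic $\gamma \: [0,1] \to U$ of constant speed $d(x,y)$. I would parallel transport $\grad f$ back to the fixed fibre $T_y\M$ by setting $W(t) = \Pi_{\gamma(t)y}\grad f(\gamma(t))$, so that ordinary differentiation of $W$ in $T_y\M$ recovers the covariant derivative of $\grad f$ along $\gamma$, namely
\bes
W'(t) = \Pi_{\gamma(t)y}\Hess_{\gamma'(t)} f(\gamma(t)).
\ees
The isometry of $\Pi$ combined with \eqref{eqn:hess-bdd} then gives $\norm{W'(t)}_y \leq C\,d(x,y)$, and integrating over $[0,1]$ with $W(0) = \Pi_{xy}\grad f(x)$ and $W(1) = \grad f(y)$ delivers the global Lipschitz estimate $\norm{\grad f(y) - \Pi_{xy}\grad f(x)}_y \leq C\,d(x,y)$.

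The only technical point common to both directions is the compatibility identity for $W'(t)$, which rests on the defining ODE of parallel transport along $\gamma$; this is a standard fact that I would invoke rather than rederive, and it presumably underlies the proof of the local version as well. Consequently I expect no genuine obstacle here: the whole argument is a direct transcription of the local proof in which compact-set constants are replaced by global ones, the only subtlety being the verification that every bound used in the local case depends on the Lipschitz or Hessian norm in an affine-linear way that survives globalization.
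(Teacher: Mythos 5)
Your argument is correct and follows essentially the same route as the paper: the forward direction is the paper's limit argument along the geodesic $\exp_x(tv)$ with the global Lipschitz constant, and your converse—transporting $\grad f$ to a fixed fibre and integrating its derivative along the minimizing geodesic—is just a streamlined form of the paper's Lemma \ref{lem:Lip-connection} (parallel-frame) computation, with the global Hessian bound replacing the compact-set constants.
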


In the optimization literature, the property of a function $f$ defined on a totally normal neighbourhood $U\subset \M$ to have a Lipschitz continuous gradient,
is sometimes referred to as \emph{$L$-smoothness} \cite{AlOrBeLu2020,AlOrBeLu2020_2,ZhaSra2018}. Lemma \ref{lemma:bHessian} therefore says that $L$-smoothness and Hessian boundedness are equivalent on geodesically convex subsets of $\M$ (as they are in Euclidean spaces).

\subsection{Flow maps and the Cauchy--Lipschitz theorem}
\label{subsect:flow-maps}

A \emph{time-dependent vector field} on a subset $U\times [0,T)$ of $\M\times[0,\infty)$ is a function $X \: U \times [0,T) \to T\M$ such that $X(\cdot,t)$ is a vector field on $U$ for all $t\in [0,T)$; we will also use $X_t$ to denote $X(\cdot,t)$. Given such a time-dependent vector field $X$ and $\Sigma \subset U$ measurable, a \emph{flow map} generated by $(X,\Sigma)$ is a function $\Psi_X\: \Sigma \times [0,\tau) \to U$, for some $\tau\leq T$, that for all $x\in\Sigma$ and $t\in[0,\tau)$ satisfies
\begin{equation} \label{eqn:characteristics-general}
  \begin{cases} \dfrac{\der}{\der t} \Psi^t_X(x) = X_t(\Psi^t_X(x)),\\[10pt]
    \Psi^0_X(x) = x, \end{cases}
\end{equation}
where we used the abbreviation $\Psi^t_X$ for $\Psi_X(\cdot,t)$. A flow map is said to be \emph{maximal} if its time domain cannot be extended while \eqref{eqn:characteristics-general} holds; it is said to be \emph{global} if $\tau=T=\infty$ and \emph{local} otherwise.


The following theorem gives the local well-posedness of flow maps generated by Lipschitz vector fields in the sense of Definition \ref{defn:Lip-parallel}.
\begin{thm}[Local Cauchy--Lipschitz Theorem]\label{thm:Cauchy-Lip}
  Suppose that $U$ is a totally normal neighbourhood of $\M$. Let $T\in(0,\infty]$ and let $X$ be a time-dependent vector field on $U\times [0,T)$. Suppose that the vector fields in $\{X_t\}_{t\in [0,T)}$ are locally Lipschitz continuous and satisfy, for any compact sets $Q\subset U$ and $S\subset [0,T)$,
  \be\label{eqn:bdd-Lip}
  \int_S \left( \norm{X_t}_{L^\infty(Q)} + \norm{X_t}_{\mathrm{Lip}(Q)} \right) \d t < \infty.
  \ee
  Then, for every compact subset $\Sigma$ of $U$, there exists a unique maximal flow map generated by $(X,\Sigma)$.
\end{thm}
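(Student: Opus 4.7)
My plan is to reduce the statement to the classical Carath\'eodory--Cauchy--Lipschitz theorem on Euclidean space, applied locally through normal charts. The crucial bridge is Lemma \ref{lem:pf-Lip}, which transfers Lipschitz continuity via parallel transport into ordinary Lipschitz continuity on normal charts, so that standard ODE theory becomes directly available.

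First, I would fix $x_0 \in \Sigma$ and pick $\delta > 0$ small enough that $\delta \leq r_{\mt{conv}}(x_0)$, that $Q := \overline{B_\delta(x_0)}$ is compact and contained in $U$, and that $Q$ lies in the domain $V$ of a normal chart $(V,\varphi)$ generated by $x_0$. Lemma \ref{lem:pf-Lip} then yields, for each $t$, an ordinary Lipschitz constant for $\varphi_* X_t$ on $\varphi(Q)$ depending linearly on $\norm{X_t}_{L^\infty(Q)}$ and $\norm{X_t}_{\mt{Lip}(Q)}$; together with \eqref{eqn:bdd-Lip}, this gives integrable-in-time $L^\infty$ and Lipschitz bounds for $\varphi_* X$ on $\varphi(Q) \times S$ for any compact $S \subset [0,T)$. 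The Carath\'eodory form of Cauchy--Lipschitz on $\R^n$ thus supplies, for each $y \in \varphi(\overline{B_{\delta/2}(x_0)})$, a unique local solution of $\dot z(t) = \varphi_* X_t(z(t))$ with $z(0)=y$, and integrability of $\norm{\varphi_* X_t}_{L^\infty}$ ensures $z(t)$ remains in $\varphi(Q)$ for some positive time. Pulling back via $\varphi^{-1}$ produces a local flow on $\overline{B_{\delta/2}(x_0)}$ satisfying \eqref{eqn:characteristics-general}.

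Second, since $\Sigma$ is compact I cover it by finitely many such half-balls $B_{\delta_i/2}(x_i)$; intersecting the corresponding existence times gives a single $\tau_1 > 0$ and a flow $\Psi^t_X \: \Sigma \to U$ for $t \in [0, \tau_1)$. Uniqueness within each chart is inherited from the Euclidean theorem, and agreement on chart overlaps again follows from the same uniqueness, so $\Psi^t_X$ is unambiguously defined on all of $\Sigma$.

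Third, for maximality I iterate: whenever $\Psi^\tau_X(\Sigma)$ remains a compact subset of $U$, the local construction applies to this set as new initial data and extends the flow strictly beyond $\tau$. A standard supremum (or Zorn) argument then produces the unique maximal flow. The main obstacle is administrative rather than analytic: ensuring that the local-in-space, local-in-time pieces patch together consistently and extend maximally while the targets stay in $U$. Lemma \ref{lem:pf-Lip} isolates all the genuine analytic content into an application of the Euclidean theorem, leaving the rest a careful covering-and-extension argument.
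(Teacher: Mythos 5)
Your proposal is correct and follows essentially the same route as the paper: reduce to the Euclidean (Carath\'eodory) Cauchy--Lipschitz theorem through normal charts, with Lemma \ref{lem:pf-Lip} converting parallel-transport Lipschitz bounds into chart Lipschitz bounds integrable in time via \eqref{eqn:bdd-Lip}, and then a finite-cover compactness argument over $\Sigma$ to obtain a uniform positive existence time and the unique maximal flow. The only difference is organizational (you patch local flows over a cover of half-balls, while the paper builds the maximal trajectory from each point and takes the infimum of the maximal times over a finite cover), which does not change the substance.
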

\begin{proof}
  The proof follows very closely that given in \cite{FePaPa2020} of the more classical version of the theorem for Lipschitz continuous vector fields on charts, with the additional use of Lemma \ref{lem:pf-Lip}. For the details, see Appendix \ref{appendix:Cauchy-Lip}.
\end{proof}


Theorem \ref{thm:Cauchy-Lip} gives \emph{local} well-posedness for flow maps. As given by the theorem below, we have \emph{global} well-posedness whenever $\M$ is convex and the vector fields in question are globally Lipschitz on $\M\times [0,\infty)$.

\begin{thm}[Global Cauchy--Lipschitz Theorem on Convex Manifold]
  \label{thm:Cauchy-Lip-glob}
  Assume that $\M$ is geodescially convex and let $X$ be a time-dependent vector field on $\M\times [0,\infty)$. Suppose that the vector fields in $\{X_t\}_{t\in [0,\infty)}$ are Lipschitz continuous and for any compact set $Q\subset \M$ there holds
  \bes
  \sup_{t\in[0,\infty)} \left(\norm{X_t}_{L^\infty(Q)} + \norm{X_t}_{\mathrm{Lip}(\M)}\right) < \infty.
  \ees
  Then, for every compact subset $\Sigma$ of $\M$, there exists a unique global flow map generated by $(X,\Sigma)$.
\end{thm}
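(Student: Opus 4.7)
The plan is to produce the flow locally via Theorem~\ref{thm:Cauchy-Lip} and then rule out finite-time blow-up with a Grönwall-type distance estimate driven by Lipschitz continuity in the parallel-transport sense. Geodesic convexity of $\M$ makes $\M$ itself a totally normal neighbourhood, so Theorem~\ref{thm:Cauchy-Lip} applies with $U=\M$: the hypothesis $\sup_{t\geq 0}(\norm{X_t}_{L^\infty(Q)}+\norm{X_t}_{\mathrm{Lip}(\M)})<\infty$ for every compact $Q\subset\M$, together with the trivial inequality $\norm{X_t}_{\mathrm{Lip}(Q)}\leq \norm{X_t}_{\mathrm{Lip}(\M)}$, immediately yields \eqref{eqn:bdd-Lip} on any compact time window. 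Applied to the compact set $\Sigma$, this produces a unique maximal flow $\Psi_X\:\Sigma\times[0,\tau)\to\M$ with $\tau\in(0,\infty]$, and the remaining task is to show $\tau=\infty$.

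For the a priori estimate, fix $x_0\in\Sigma$ and set
\begin{equation*}
C:=\sup_{t\geq0}\bigl(\norm{X_t(x_0)}_{x_0}+\norm{X_t}_{\mathrm{Lip}(\M)}\bigr),
\end{equation*}
which is finite by hypothesis. For any $y\in\M$, Lipschitz continuity in the sense of parallel transport combined with the isometry property of $\Pi_{x_0y}$ gives
\begin{equation*}
\norm{X_t(y)}_y\leq \norm{X_t(x_0)}_{x_0}+C\,d(y,x_0)\leq C\bigl(1+d(y,x_0)\bigr).
\end{equation*}
Let $\varphi(t):=d(\Psi^t_X(x),x_0)$. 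Since the length of the flow curve between times $t$ and $t+h$ is at most $\int_t^{t+h}\norm{X_s(\Psi^s_X(x))}_{\Psi^s_X(x)}\der s$, the triangle inequality and the display above yield
\begin{equation*}
\varphi(t+h)-\varphi(t)\leq C\int_t^{t+h} \bigl(1+\varphi(s)\bigr)\der s,
\end{equation*}
so Grönwall's inequality produces $\varphi(t)\leq (1+d(x,x_0))e^{Ct}-1$ on $[0,\tau)$. Hence on any bounded time window $[0,T]$ the image $\Psi^t_X(\Sigma)$ stays in a closed metric ball $\bar B_{R(T)}(x_0)$ uniformly in $x\in\Sigma$ and $t\in[0,\min(\tau,T))$; by Hopf--Rinow (using completeness of $\M$), this ball is compact.

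Suppose now for contradiction that $\tau<\infty$. The preceding bound yields $\norm{X_t(\Psi^t_X(x))}_{\Psi^t_X(x)}\leq C(1+R(\tau))$ on $[0,\tau)$, so each trajectory $t\mapsto \Psi^t_X(x)$ is uniformly Lipschitz in $t$ and hence Cauchy as $t\to\tau^-$; completeness of $\M$ gives a limit $y_x\in\bar B_{R(\tau)}(x_0)$. Applying Theorem~\ref{thm:Cauchy-Lip} to the time-shifted vector field $(y,s)\mapsto X_{\tau+s}(y)$ with compact initial set $\bar B_{R(\tau)}(x_0)$ produces a flow on a nontrivial interval $[0,\tau')$ past $\tau$; concatenating with $\Psi_X$, and invoking uniqueness from the local theorem to guarantee consistency, extends the flow strictly past $\tau$, contradicting maximality.

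The only real subtlety lies in the a priori bound, and it is precisely there that both hypotheses of the theorem are essential: geodesic convexity of $\M$ is what makes $\norm{\cdot}_{\mathrm{Lip}(\M)}$ a sensible global object (so that Definition~\ref{defn:Lip-parallel} applies on all of $\M$), and the parallel-transport formulation of Lipschitz continuity, as opposed to the chart version, is exactly what allows one to control $\norm{X_t(y)}_y$ intrinsically by $\norm{X_t(x_0)}_{x_0}+C\,d(y,x_0)$ without any ambient-space detour. Everything else reduces to the standard maximal-interval extension mechanism for ODEs.
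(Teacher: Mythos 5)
Your proof is correct, but it takes a genuinely different route from the paper's. The paper splits into two cases: when $\M$ is compact it invokes the Escape Lemma together with Theorem \ref{thm:Cauchy-Lip} and a contradiction argument; when $\M$ is unbounded it extracts from the proof of Theorem \ref{thm:Cauchy-Lip} a quantitative lower bound on the local existence time, $\tau\geq\alpha\min\bigl(\dist(\Sigma,\p U)/\sup_t\norm{X_t}_{L^\infty(U)},\,1/\sup_t\norm{X_t}_{\mathrm{Lip}(\M)}\bigr)$, combines it with the linear growth estimate $\sup_t\norm{X_t}_{L^\infty(U)}\leq C(1+\Diam(U))$ (the same consequence of global Lipschitz continuity that you prove intrinsically via parallel transport), chooses $U$ so that $\dist(\Sigma,\p U)\geq 1+\Diam(U)$ to make the time step bounded below independently of the iterate, and concatenates local flows indefinitely. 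You instead argue uniformly in both cases: Theorem \ref{thm:Cauchy-Lip} with $U=\M$ (legitimate, since in the paper's terminology an open geodesically convex set is totally normal) gives a maximal flow, your Gr\"onwall bound $d(\Psi^t_X(x),x_0)\leq(1+d(x,x_0))e^{Ct}-1$ confines trajectories to closed balls that are compact by Hopf--Rinow, and the classical continuation argument at a finite maximal time yields $\tau=\infty$. Your version avoids the case distinction and the somewhat delicate appeal to a constant hidden in the proof of the local theorem, and it directly produces the growing ball $B_{R(t)}(x_0)$ that the paper later extracts from its iterative construction for Theorem \ref{thm:well-posedness-glob}; the paper's uniform-time-step iteration, in exchange, never needs convergence of trajectories at the putative blow-up time. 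Two cosmetic points: the concatenation is a solution because the integral form of \eqref{eqn:characteristics-general} holds across the junction time, not by ``uniqueness for consistency''; and uniqueness of the global flow should be recorded as following from uniqueness of the maximal flow in Theorem \ref{thm:Cauchy-Lip}.
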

\begin{proof}
  The proof follows from the Escape Lemma if $\M$ is compact and from a classical extension argument if $\M$ is unbounded. We refer the reader to Appendix \ref{appendix:Cauchy-Lip-glob} for the details.
\end{proof}



\section{Preliminaries on the interaction equation}
\label{sect:interaction-eq}


For simplicity, as we have already implicitly done in the previous section, we will drop the subindices $\M$ on the differential operators in equation \eqref{eqn:model}. We present in this section the notion of measure-valued solutions for \eqref{eqn:model} and the $1$-Wasserstein probability space. We only give the minimal tools we will need for our purpose; for a thorough theory of probability spaces and continuity equations in Euclidean space, we refer the reader to \cite{AGS2005}. 


\subsection{Notion of solution for the interaction equation} 
\label{subsect:solution}

For $U \subset \M$ open, denote by $\P(U)$ the set of Borel probability measures on the metric space $(U,d)$ and by $\Cont([0,T);\P(U))$ the set of continuous curves from $[0,T)$ into $\P(U)$ endowed with the narrow topology (i.e., the topology dual to the space of continuous bounded functions on $U$; see \cite{AGS2005}). 

If $\Psi\: \Sigma \to U$ for some measurable $\Sigma\subset U$, we denote by $\Psi \# \rho$ the \emph{push-forward} in the mass transport sense of $\rho$ through $\Psi$. Equivalently, $\Psi\#\rho$ is the probability measure such that for every measurable function $\zeta\: U \to [-\infty,\infty]$ with $\zeta\circ \Psi$ integrable with respect to $\rho$, we have
\bes
	\int_U \zeta(x) \d (\Psi \# \rho)(x) = \int_\Sigma \zeta(\Psi(x)) \d\rho(x).
\ees

For $T\in(0,\infty]$ and a curve $(\rho_t)_{t\in [0,T)} \subset \P(U)$, we denote by $\V[\rho]\: U \times [0,T) \to T\M$ the velocity vector field associated to \eqref{eqn:model}, that is,
\begin{equation} \label{eqn:v-field}
	\V[\rho] (x,t) =  -\grad K *\rho_t (x), \qquad \mbox{for all $(x,t) \in U \times [0,T)$},
\end{equation}
where for convenience we used $\rho_t$ in place of $\rho(t)$, as we shall often do in the sequel. Whether $v[\rho]$ is well-defined or not depends on $U$; indeed, we shall see that for $\grad K$ to make sense in the convolution with $\rho_t$, the set $U$ should be such that $\log_xy$ exists for all $x,y\in U$.

We define solutions to \eqref{eqn:model} in a geometric way, as the push-forward of the initial data through the corresponding flow map \cite[Chapter~8.1]{AGS2005}. Specifically, we adopt the following definition as solution to equation \eqref{eqn:model}:

\begin{defn}[Solution]
\label{defn:solution}
	Given $U\subset\M$ open, we say that $(\rho_t)_{t\in[0,T)} \subset \P(U)$ is a \emph{weak solution} to \eqref{eqn:model} if $(\V[\rho],\supp(\rho_0))$ generates a unique flow map $\Psi_{\V[\rho]}$ defined on $\supp(\rho_0)\times [0,T)$, and $\rho_t$ satisfies the implicit relation
\be \label{eqn:rho-push-forward}
	\rho_t = \Psi^t_{\V[\rho]} \# \rho_0, \qquad \mbox{for all $t\in[0,T)$}.
\ee
\end{defn}

It can be shown that solutions in the sense of Definition \ref{defn:solution} are also weak solutions in the sense of distributions to equation \eqref{eqn:model}; see \cite[Lemma 8.1.6]{AGS2005}.


\subsection{Wasserstein distance} 
\label{subsect:Wasserstein}

We will use the \emph{intrinsic $1$-Wasserstein distance} to compare solutions to \eqref{eqn:model}. Let $U\subset \M$ be open. For $\rho,\sigma \in \P(U)$, this distance is defined as:
\bes
	W_1(\rho,\sigma) = \inf_{\pi \in \Pi(\rho,\sigma)} \int_{U\times U} d(x,y) \d\pi(x,y),
\ees
where $\Pi(\rho,\sigma) \subset \P(U\times U)$ is the set of transport plans between $\rho$ and $\sigma$, i.e., the set of elements in $\P(U\times U)$ with first and second marginals $\rho$ and $\sigma$, respectively. 

Denote by $\P_1(U)$ the set of probability measures on $U$ with finite first moment and by $\P_\infty(U) \subset \P_1(U)$ the set of probability measures on $U$ with compact support. Both spaces $(\P_1(U),W_1)$ and $(\P_\infty(U),W_1)$) are well-defined metric spaces. In addition, we metrize the space $\Cont([0,T);\P_1(U))$ (and thus $\Cont([0,T);\P_\infty(U))$) with the distance $\bd_1$ defined by
\bes
	\bd_1(\rho,\sigma) = \sup_{t \in [0,T)} W_1(\rho_t,\sigma_t), \qquad \mbox{for all $\rho,\sigma \in \Cont([0,T);\P_1(U))$}.
\ees
Of course, when $U$ is bounded we have $\P(U) = \P_1(U) = \P_\infty(U)$.

The lemma below, used later in the paper, contains various Lipschitz properties involving the distance $W_1$.
\begin{lem}\label{lem:preliminary}
	Let $U\subset \M$ be open.
	\begin{enumerate}[label=(\roman*)]
		\item\label{it:prel1} Let $\Sigma\subset U$. Let furthermore $\rho \in \P_1(U)$ with $\supp(\rho) \subset \Sigma$ and $\Psi_1,\Psi_2\:\Sigma \to U$ be measurable functions. Then,
			\bes
				W_1({\Psi_1}\#\rho,{\Psi_2}\#\rho) \leq \sup_{x \in \supp(\rho)} d(\Psi_1(x),\Psi_2(x)).
			\ees
		\item\label{it:prel2} Let $T\in(0,\infty]$ and let $X$ be a time-dependent vector field on $U\times[0,T)$. Let $\rho \in \P_1(U)$ and suppose that $(X,\supp(\rho))$ generates a flow map $\Psi_X$ defined on $\supp(\rho)\times[0,\tau)$ for some $\tau\leq T$. Suppose furthermore that $X$ is bounded, i.e., there exists $C>0$ such that $\norm{X(x,t)}_{x\in U}<C$ for all $x\in U$ and $t\in[0,T)$. Then,
			\bes
				W_1({\Psi^t_X} \# \rho,{\Psi^s_X} \# \rho) \leq C|t-s|, \quad \quad \mbox{for all $t,s \in [0,\tau)$}.
			\ees
	\end{enumerate}
\end{lem}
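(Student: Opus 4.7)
The plan is to establish (i) by constructing an explicit coupling between $\Psi_1\#\rho$ and $\Psi_2\#\rho$, and then to derive (ii) by applying (i) together with a standard arclength estimate for the flow curve.

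For part (i), I would use the candidate transport plan $\pi=(\Psi_1,\Psi_2)\#\rho\in\P(U\times U)$. Measurability of $\Psi_1,\Psi_2$ ensures this push-forward is a well-defined Borel probability measure, and by construction its first and second marginals are $\Psi_1\#\rho$ and $\Psi_2\#\rho$, so $\pi\in\Pi(\Psi_1\#\rho,\Psi_2\#\rho)$. Plugging $\pi$ into the definition of $W_1$ and applying the change-of-variables formula for push-forwards gives
\begin{equation*}
W_1(\Psi_1\#\rho,\Psi_2\#\rho)\leq\int_{U\times U}d(x,y)\d\pi(x,y)=\int_\Sigma d(\Psi_1(x),\Psi_2(x))\d\rho(x),
\end{equation*}
and the right-hand side is bounded by $\sup_{x\in\supp(\rho)}d(\Psi_1(x),\Psi_2(x))$ because $\rho$ is a probability measure concentrated on $\supp(\rho)\subset\Sigma$.

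For part (ii), I would apply part (i) with $\Psi_1=\Psi^t_X$ and $\Psi_2=\Psi^s_X$, reducing the claim to a uniform bound on $d(\Psi^t_X(x),\Psi^s_X(x))$ for $x\in\supp(\rho)$. By definition of a flow map, the curve $\tau\mapsto\Psi^\tau_X(x)$ has velocity $X_\tau(\Psi^\tau_X(x))$, so the intrinsic distance between its endpoints is bounded by the length of this curve:
\begin{equation*}
d(\Psi^t_X(x),\Psi^s_X(x))\leq\left|\int_s^t\norm{X_\tau(\Psi^\tau_X(x))}_{\Psi^\tau_X(x)}\d\tau\right|\leq C|t-s|,
\end{equation*}
where the last inequality uses the uniform bound on $X$. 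Taking the supremum over $x\in\supp(\rho)$ and combining with (i) yields the claim. I do not anticipate any serious obstacle: part (i) is essentially the definition of $W_1$ applied to a natural coupling, and part (ii) is a length estimate that requires only boundedness of $X$, not its Lipschitz continuity.
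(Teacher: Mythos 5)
Your proposal is correct and follows the standard argument that the paper invokes by reference to \cite[Lemma 2.3]{FePaPa2020}: part (i) via the coupling $(\Psi_1,\Psi_2)\#\rho$ and the definition of $W_1$, and part (ii) by reducing to (i) and bounding $d(\Psi^t_X(x),\Psi^s_X(x))$ by the arclength of the flow trajectory, which the uniform bound on $X$ controls by $C|t-s|$. No gaps; this is essentially the same proof.
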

\begin{proof}
We refer to \cite[Lemma 2.3]{FePaPa2020} for the proof of this result.
\end{proof}


\section{Well-posedness of the interaction equation}
\label{sect:local-wp}

We study the well-posedness of solutions to \eqref{eqn:model} with an interaction potential $K$ that satisfies:
\begin{namedthm}{(K)}
\namedlabel{hyp:K}{{\bfseries \upshape (K)}}
  $K\: \M \times \M \to \R$ has the form
  \begin{equation}
    \label{eqn:K-gen}
    K(x,y) = g(d(x,y)^2), \qquad \mbox{for all } x,y\in \M,
  \end{equation}
  where $g\: [0,\infty) \to \R$ is differentiable. 
\end{namedthm}

Because the interaction equation \eqref{eqn:model} involves the gradient of $K$, we are interested in the Lipschitz continuity of the gradient of the squared distance function $d$. For all $x,y\in \M$ define $d_y(x) = d(x,y)$; if $x$ and $y$ have a unique minimizing geodesic linking them, it holds that
\begin{equation}
\label{eqn:grad-d2}
\log_x y = - \tfrac{1}{2} \grad d^2_y(x).
\end{equation}
Similarly, for all $x,y\in\M$ write $K_y(x) = K(x,y)$, and again if there exists a unique minimizing geodesic linking $x$ and $y$ the chain rule yields
\begin{equation}
\label{eqn:gradK-gen}
	\grad K_y(x) = -2 g'(d(x,y)^2) \log_x y.
\end{equation}
In other words, equations \eqref{eqn:grad-d2} and \eqref{eqn:gradK-gen} only hold for points $y$ away from the cut locus of $x$. In particular, they hold for points $x,y\in\M$ with $d(x,y) < \iM$.


Equation \eqref{eqn:model} can be interpreted as an aggregation model using \eqref{eqn:v-field} and \eqref{eqn:gradK-gen}. Indeed, when a point mass at $x$ interacts with a point mass at $y$, the mass at $x$ is driven by a force of magnitude proportional to $|g'(d(x,y))^2|d(x,y)$, to move either towards $y$ (provided $g'(d(x,y)^2) >0$) or away from $y$  (provided $g'(d(x,y)^2) < 0$). The velocity field at $x$ computed by \eqref{eqn:v-field} accounts for all contributions from interactions with point masses $y \in U$ through the convolution. 


\subsection{Fundamental lemmas}
\label{subsect:fundamental-lemmas}

We give here several fundamental lemmas which will be at the core of our proof of the well-posedness of solutions to \eqref{eqn:model}.
Note first that, given $x,y\in\M$ and $U\subset \M$ a normal neighbourhood of both $x$ and $y$, there holds
\begin{equation}
  \label{eqn:parallel-log}
  \Pi_{yx} \log_y(x) = -\log_x(y).
\end{equation}
This is immediate from the fact that the parallel transport is an isometry. Indeed, since $\log_y(x)$ is tangent to the geodesic joining $y$ and $x$, the vector $\Pi_{yx} \log_y(x)$ is tangent to the geodesic at $x$ and has length $\| \log_y (x)\|_y = d(x,y)$.

We begin with a result on time-dependent vector fields. 

\begin{lem}[Fundamental Lemma I]\label{lem:dist-flow-maps}
Let $U$ be a totally normal neighbourhood of $\M$ and let $X,Y$ be two time-dependent vector fields on $U$. Let $\Sigma\subset U$ be measurable and suppose that $\Psi_X$ and $\Psi_Y$ are flow maps defined on $\Sigma\times [0,\tau)$, for some $\tau >0$, generated by $(X,\Sigma)$ and $(Y,\Sigma)$, respectively. Assume furthermore that $X$ is Lipschitz continuous with respect to its first variable, uniformly with respect to its second variable, i.e., there exists $L>0$ such that
	\bes
		\norm{X_t(x) - \Pi_{yx}X_t(y)}_x \leq L\, d(x,y), \quad \quad \mbox{for all $(x,y,t) \in U \times U \times [0,T)$}.
	\ees
	Then, for all $x\in\Sigma$, there holds
	\bes
		d(\PsiX(x),\PsiY(x)) \leq \frac{e^{Lt} -1}{L} \|X-Y\|_{L^\infty(U\times [0,\tau))},\quad \quad \mbox{for all $t\in[0,\tau)$}.
	\ees
\end{lem}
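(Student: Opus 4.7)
I fix $x\in\Sigma$ and set $a(t) = \PsiX(x)$, $b(t) = \PsiY(x)$, and $\delta(t) = d(a(t),b(t))$. Since $U$ is totally normal, for every $t\in[0,\tau)$ the points $a(t),b(t)$ are joined by a unique minimizing geodesic, so $\log_{a(t)} b(t)$, $\log_{b(t)} a(t)$ and the parallel transport $\Pi_{b(t) a(t)}$ are well-defined, and $d^2$ is smooth near $(a(t),b(t))$. I will work with $\delta^2$ rather than $\delta$ to avoid any non-differentiability at times where the two trajectories coincide (they do at $t=0$, where $\delta(0)=0$), and deduce a differential inequality for $\delta$ whose Grönwall integration yields the desired bound.

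The first step is to differentiate $\tfrac12\delta^2$. By the chain rule applied to $(a,b)\mapsto \tfrac12 d(a,b)^2$, using identity \eqref{eqn:grad-d2} in each variable and the characteristic equation \eqref{eqn:characteristics-general} for $\PsiX$ and $\PsiY$, I obtain
\[
\tfrac{d}{dt}\tfrac12\delta(t)^2 = -\ap{\log_a b,\, X_t(a)}_a - \ap{\log_b a,\, Y_t(b)}_b,
\]
with $a=a(t)$, $b=b(t)$. Using the parallel-transport identity \eqref{eqn:parallel-log} in the form $\log_b a = -\Pi_{ab}\log_a b$ together with the fact that $\Pi_{ba}$ is a linear isometry, the second inner product can be rewritten in $T_a\M$, giving
\[
\tfrac{d}{dt}\tfrac12\delta(t)^2 = \ap{\log_a b,\; \Pi_{ba} Y_t(b) - X_t(a)}_a.
\]

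Next, I apply Cauchy--Schwarz together with $\norm{\log_a b}_a = \delta(t)$, then add and subtract $\Pi_{ba} X_t(b)$ and again use that $\Pi_{ba}$ is an isometry, obtaining
\[
\tfrac{d}{dt}\tfrac12\delta(t)^2 \leq \delta(t)\Bigl(\norm{X_t(a) - \Pi_{ba} X_t(b)}_a + \norm{X_t(b) - Y_t(b)}_b\Bigr).
\]
The Lipschitz hypothesis on $X$ bounds the first term by $L\,\delta(t)$, and the second term is at most $\norm{X-Y}_{L^\infty(U\times[0,\tau))}$. Hence
\[
\tfrac{d}{dt}\tfrac12\delta(t)^2 \leq L\,\delta(t)^2 + \norm{X-Y}_{L^\infty}\,\delta(t),
\]
from which a standard argument (dividing by $\delta$ where positive, or working with $\delta+\e$ and letting $\e\to 0$) yields the differential inequality $\delta'(t) \leq L\,\delta(t) + \norm{X-Y}_{L^\infty}$ almost everywhere. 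Combined with $\delta(0)=0$, Grönwall's lemma gives
\[
\delta(t) \leq \frac{e^{Lt}-1}{L}\,\norm{X-Y}_{L^\infty(U\times[0,\tau))},
\]
as claimed.

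The only genuinely delicate point is the manipulation of $\delta$ near times where it vanishes; this is handled cleanly by working with $\delta^2$, which is smooth because both factors in $(a,b)\mapsto d(a,b)^2$ live in a totally normal neighbourhood. Everything else is routine once the two identities that justify the computation, namely \eqref{eqn:grad-d2} for the gradient of the squared distance and \eqref{eqn:parallel-log} for the compatibility of $\log$ with parallel transport, are in hand; these are precisely the tools that render the intrinsic Grönwall argument parallel to its Euclidean counterpart.
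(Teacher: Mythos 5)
Your proposal is correct and follows essentially the same route as the paper's proof: differentiate $\tfrac12 d^2$ along the two flows, use $\grad d_y^2(x)=-2\log_x y$ and the identity \eqref{eqn:parallel-log} to collect everything in one tangent space, add and subtract the transported $X_t$, apply the Lipschitz hypothesis and the $L^\infty$ bound, and conclude by Grönwall. Your explicit handling of the degeneracy at $\delta=0$ via $\delta^2$ and the $\delta+\e$ device is a minor technical polish of the paper's ``cancel a $d(p,q)$'' step, not a different argument.
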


\begin{proof}
Let $x \in \Sigma$ and, for a better readability of the following calculations, write $p=\Psi_X^t(x)$ and $q = \Psi_Y^t(x)$; note that $p,q \in U$. We can assume $p\neq q$ or the result is trivial.

Since $U$ is totally normal, the function $t\mapsto d^2(p,q) = d^2(\Phi_X^t(x),\Phi_Y^t(x))$ is differentiable and, for all $t\in [0,\tau)$, we have
\be \label{eqn:est0}
\begin{split}
	\frac12 \frac{\der}{\der t} d^2(p,q) & = \frac12 \ap{ \grad d_{q}^2(p), X_t(p) }_{p} + \frac12 \ap{ \grad d_{p}^2(q), Y_t(q) }_{q}\\[2pt]
	&= - \ap{ \log_{p}(q), X_t(p) }_{p} - \ap{ \log_{q}(p), Y_t(q) }_{q}\\[2pt]
	&= - \ap{ \log_{p}(q), X_t(p) }_{p} - \ap{ \Pi_{qp} \log_{q}(p), \Pi_{qp} Y_t(q) }_p =: I + \mathit{II},
\end{split}
\ee
where for the third equality we used that the parallel transport is an isometry. 

We add and subtract the quantities
\[
	A: = \ap{ \Pi_{qp} \log_{q}(p), X_t(p) }_p \qquad \mbox{and} \qquad B:= \ap{ \Pi_{qp}\log_{q}(p), \Pi_{qp}X_t(q) }_p
\]
to the right-hand side of \eqref{eqn:est0}, which now reads
\[
	I - A + A - B + B + \mathit{II}. 
\]
The term $\mathit{II} + B$ estimates as
\begin{align*}
	\mathit{II} + B &= \ap{ \Pi_{qp}\log_{q}(p), \Pi_{qp}X_t(q) - \Pi_{qp}Y_t(q) }_p\\[2pt]
	&\leq \norm{ \Pi_{qp}\log_{q}(p) }_p \norm{ \Pi_{qp}(X_t-Y_t)(q) }_p\\[2pt]
	&= \norm{ \log_{q}(p) }_q \norm{ X_t(q)-Y_t(q) }_q\\[2pt]
	&\leq d(p,q) \norm{ X-Y }_{L^\infty(U\times[0,\tau))},
\end{align*}
where we used again that $\Pi_{qp}$ is an isometry and that $\norm{\log_q(p)}_q = d(p,q)$. Also estimate
\begin{align*}
	A - B &= \ap{ \Pi_{qp}\log_{q}(p),  X_t(p) - \Pi_{qp}X_t(q)}_p\\[2pt]
	&\leq \norm{ \Pi_{qp}\log_{q}(p) }_p \norm{ X_t(p) - \Pi_{qp}X_t(q) }_p\\[2pt]
	&\leq \norm{ \log_{q}(p) }_q L\, d(p,q)\\[2pt]
	&= L\, d(p,q)^2,
\end{align*}
where we used the Lipschitz continuity of $X$, the isometric property of $\Pi_{qp}$ and that $\norm{\log_q(p)}_q = d(p,q)$. Finally, by \eqref{eqn:parallel-log}, we yield  
\bes
	I - A = - \ap{ \log_{p}(q) + \Pi_{qp}\log_{q}(p), X_t(p) }_p = 0.
\ees

Using these estimates in \eqref{eqn:est0}, we find
\[
	\frac12 \frac{\der}{\der t} d^2(p,q) \leq L\, d^2(p,q) + \norm{X-Y}_{L^\infty(U\times[0,\tau))} d(p,q),
\]
and after cancelling a $d(p,q)$, we get:
\[
	\frac{\der}{\der t} d(p,q) \leq L\, d(p,q) + \norm{X-Y}_{L^\infty(U\times[0,\tau))}.
\]
Gronwall's lemma now gives the desired result.
\end{proof}

We now focus on what restrictions we should further impose on a neighbourhood $U$ for the well-posedness to hold on $U$. Given a point $z\in\M$, \cite[Theorem 6.6.1]{Jost2017} gives a bound on the Hessian of $d_z^2$ provided the sectional curvature $\K$ of $\M$ is locally bounded. Specifically, let $r<\iM$ so that the exponential map $\exp_z$ is a diffeomorphism on $B_r(0)\subset T_z\M$. Denote by $B_r(z) := \exp_z(B_r(0))$ the geodesic ball centred at $z$ with radius $r$, and suppose that there exist reals $\lambda\leq 0$ and $0\leq\mu\leq\frac{\pi^2}{4r^2}$ such that 
\begin{equation}
\label{eqn:K-bounds}
\lambda \leq \K \leq \mu, \qquad \text{ on $B_r(z)$}.
\end{equation}
Then, by \cite[Theorem 6.6.1]{Jost2017} it holds that
\begin{equation}
\label{eqn:bHessian}
2\sqrt{\mu}\, d(x,z) \cot( \sqrt{\mu}\, d(x,z)) \|v\|_x^2 \leq \ap{\Hess_v d_z^2(x),v}_x \leq 2 \sqrt{-\lambda}\, d(x,z) \coth( \sqrt{-\lambda}\, d(x,z)) \|v\|_x^2,
\end{equation}
for all $x \in B_r(z)$ and $v \in T_x \M$. Note that the assumption of completeness and simple connectedness of $\M$ in \ref{hyp:M} ensures that $\iM = \infty$ whenever $\K\leq0$ everywhere (cf. \cite[Corollary 6.9.1]{Jost2017}, a consequence of the Cartan--Hadamard theorem). 

Let us further introduce some notation used throughout the rest of the paper:
\begin{notn}
  \label{notn:local-curv}
  Because $\M$ is locally compact (since finite-dimensional) and $\K$ is continuous, $\K$ is also locally bounded (cf. \cite[Remark 2.2]{Mihai1976}). For any $p\in\M$ and $r>0$, we write $\lambda_r(p)\leq0$ and $\mu_r(p)\geq0$ any lower and upper bounds for $\K$ on the set $\{x\in\M \st d(p,x) < r \}$ (which, by the terminology set up in Appendix \ref{app:terminology}, is denoted by $B_r(p)$ if $r\leq r_\mathrm{inj}(p)$). 
\end{notn}


\begin{lem}[Fundamental Lemma II]
  \label{lemma:d2-Hessian}
  Let $U\subset B_{r/2}(p)$ be open for some $p\in\M$ and $r < \iM$, and denote $\lambda = \lambda_{2r}(p)$ and $\mu = \mu_{2r}(p)$. Suppose moreover that $U$ satisfies
  \begin{equation}
    \label{eqn:diam-U}
    \diam:=\Diam(U) < \frac{\pi}{2 \sqrt{\mu}}, \qquad \text{if $\mu>0$}.
  \end{equation}
  Then, for all $z\in U$, the Hessian $\Hess d^2_z$ is bounded on $U$ by $L:=2\sqrt{-\lambda}\, \diam \coth( \sqrt{-\lambda}\, \diam)$.

If moreover $U$ is geodesically convex, we have
\begin{equation}
\label{eqn:d2-Lipschitz}
\| \log_x z - \Pi_{yx} \log_y z \|_x \leq \tfrac{L}{2} d(x,y), \qquad \text{ for all } x,y,z \in U.
\end{equation}
\end{lem}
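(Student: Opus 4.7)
The plan is to invoke the Hessian bound \eqref{eqn:bHessian} from Jost's theorem with reference point $z \in U$, upgrade it to a bound on the Hessian operator, then feed the result into Lemma \ref{lemma:bHessian} to translate it (via \eqref{eqn:grad-d2}) into the Lipschitz estimate \eqref{eqn:d2-Lipschitz}.

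First, I would verify the geometric inclusions required to apply \eqref{eqn:bHessian}. Fix $z \in U$. By the triangle inequality and $U \subset B_{r/2}(p)$, any $x \in U$ satisfies $d(x,z) \leq d(x,p) + d(p,z) < r/2 + r/2 = r$, so $U \subset B_r(z)$. Similarly, $B_r(z) \subset \{w \in \M \st d(w,p) < 2r\}$, since $d(w,p) \leq d(w,z) + d(z,p) < r + r/2$, and hence the curvature bounds $\lambda \leq \K \leq \mu$ given by Notation \ref{notn:local-curv} are valid on $B_r(z)$. (Strictly, \eqref{eqn:bHessian} also requires $\mu \leq \pi^2/(4r^2)$; one bypasses this by instead applying Jost's theorem on the smaller ball $B_{\diam + \varepsilon}(z) \supset U$ for any $\varepsilon > 0$ sufficiently small that $\sqrt{\mu}(\diam + \varepsilon) < \pi/2$, which is permitted by \eqref{eqn:diam-U}.)

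Next, invoking the upper bound in \eqref{eqn:bHessian} and using that $s \mapsto s \coth s$ is increasing on $(0,\infty)$, together with $d(x,z) \leq \diam$ for all $x,z \in U$, I obtain
\[
\ap{\Hess_v d_z^2(x), v}_x \leq L\,\|v\|_x^2, \qquad \text{for all } x \in U,\ v \in T_x\M.
\]
To promote this quadratic-form estimate to the operator-norm inequality $\|\Hess_v d_z^2(x)\|_x \leq L\|v\|_x$ required by the statement (and by the setup of Lemma \ref{lemma:bHessian}), I use that $\Hess d_z^2(x)$ is a symmetric linear operator on $T_x\M$; the lower bound in \eqref{eqn:bHessian}, combined with \eqref{eqn:diam-U} which ensures $\cot(\sqrt{\mu}\,d(x,z)) > 0$, shows that it is positive semi-definite. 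Its operator norm therefore equals its largest eigenvalue, which is the supremum of $\ap{\Hess_v d_z^2(x), v}_x/\|v\|_x^2$ over unit vectors, hence is bounded by $L$.

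For the second part, suppose $U$ is geodesically convex. Applying Lemma \ref{lemma:bHessian} to $f = d_z^2$ on $U$ converts the Hessian bound into the Lipschitz estimate
\[
\|\grad d_z^2(x) - \Pi_{yx}\grad d_z^2(y)\|_x \leq L\, d(x,y), \qquad \text{for all } x,y \in U.
\]
Substituting the identity $\grad d_z^2 = -2\log_\cdot z$ from \eqref{eqn:grad-d2} and dividing by $2$ yields \eqref{eqn:d2-Lipschitz}. The main delicate point in the whole argument is the geometric bookkeeping in the first step---correctly identifying a ball on which Jost's theorem applies with valid curvature bounds---together with the passage from the quadratic-form bound to the operator-norm bound, which rests on the positive semi-definiteness guaranteed by \eqref{eqn:diam-U}; the remaining steps are unwinding of definitions.
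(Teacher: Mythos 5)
Your proposal is correct and follows essentially the same route as the paper's proof: verify that $U\subset B_r(z)\subset\{w\st d(w,p)<2r\}$ so the curvature bounds apply, invoke the Hessian estimate \eqref{eqn:bHessian} together with the monotonicity of $s\mapsto s\coth(s)$ and the nonnegativity of the lower bound (guaranteed by \eqref{eqn:diam-U}), and then obtain \eqref{eqn:d2-Lipschitz} from Lemma \ref{lemma:bHessian} and \eqref{eqn:grad-d2}. Your two extra touches---shrinking the ball to meet the hypothesis $\mu\leq\pi^2/(4r^2)$ of Jost's theorem, and explicitly passing from the quadratic-form bound to the operator-norm bound via symmetry and positive semi-definiteness of the Hessian---are points the paper leaves implicit, and they are handled correctly.
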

\begin{proof}
  The proof is a direct consequence of \eqref{eqn:bHessian}. See Appendix \ref{appendix:d2-Hessian} for the details.
\end{proof}

\begin{rem}
  \label{rem:Euclidean-Lip}
  When $\lambda_r(p)=\mu_r(p)=0$ for all $p\in \M$ and $r>0$, we recover the trivial result that $L=2$ in Lemma \ref{lemma:d2-Hessian} (and, in particular, $L$ is independent of the diameter of the subset $U$), and \eqref{eqn:d2-Lipschitz} holds for all $x,y,z\in \M$.
\end{rem}

Lemma \ref{lemma:d2-Hessian} states that, for all $z$, the map $x\mapsto \log_xz$ is Lipschitz continuous in the sense of parallel transport. On the other hand, the following lemma shows that, for all $x$, the map $z\mapsto \log_xz$ is Lipschitz continuous in the classical Euclidean sense.

\begin{lem}[Fundamental Lemma III]
  \label{lemma:Lipschitz-2}
  Let $U\subset B_r(p)$ be a totally normal neighbourhood for some $p\in\M$ and $r < \iM$, and write $\mu = \mu_r(p)$. We distinguish two cases:
  \begin{enumerate}[label=(\roman*)]
    \item \ul{$\mu =0$.} Then,
    \begin{equation}
      \label{eqn:log-diff-mu0}
      \| \log_x y - \log_x z \|_x \leq d(y,z), \qquad  \text { for all } x,y,z \in U.
    \end{equation}
    \item \ul{$\mu >0$.} Take any $0<\e<\pi$ and further assume $\Diam(U) \leq \frac{\pi - \e}{\sqrt{\mu}}$. Then,
    \begin{equation}
      \label{eqn:log-diff-mu}
      \| \log_x z - \log_x y \|_x \leq  \frac{\pi -\e}{\sin(\pi - \e)} d(y,z), \qquad \text{ for all } x,y,z \in U.
    \end{equation}
  \end{enumerate}
\end{lem}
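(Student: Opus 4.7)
The plan is to prove both bounds by a single argument that integrates a pointwise operator-norm estimate on $d(\log_x)_z$ along the minimizing geodesic connecting $y$ to $z$, the pointwise estimate being a direct consequence of Rauch's comparison theorem applied to $\exp_x$.

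First, under the upper curvature bound $\K\leq\mu$ on $B_r(p)$, Rauch's theorem yields, for every $x\in U$ and every $v\in T_x\M$ with $\norm{v}_x<\pi/\sqrt{\mu}$ (read as $+\infty$ when $\mu=0$),
\begin{equation*}
\norm{d(\exp_x)_v w}_{\exp_x v}\geq\frac{\sin(\sqrt{\mu}\,\norm{v}_x)}{\sqrt{\mu}\,\norm{v}_x}\,\norm{w}_x, \qquad \text{for all } w\in T_x\M,
\end{equation*}
with the convention $\sin(0)/0=1$. In particular $\exp_x$ is a local diffeomorphism on the relevant ball of $T_x\M$, so the identity $d(\log_x)_z=(d(\exp_x)_{\log_x z})^{-1}$ translates the lower bound into the operator-norm estimate
\begin{equation*}
\norm{d(\log_x)_z}_{\mt{op}}\leq\frac{\sqrt{\mu}\,d(x,z)}{\sin(\sqrt{\mu}\,d(x,z))}.
\end{equation*}
In case (i) this constant equals $1$. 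In case (ii), the diameter hypothesis gives $\sqrt{\mu}\,d(x,z)\leq\sqrt{\mu}\,\Diam(U)\leq\pi-\e$, and the monotonicity of $t\mapsto t/\sin t$ on $(0,\pi)$ yields the universal bound $\norm{d(\log_x)_z}_{\mt{op}}\leq(\pi-\e)/\sin(\pi-\e)$ for every $z\in U$.

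Second, to promote the pointwise bound into the claimed global Lipschitz estimate, I would fix $y,z\in U$ and consider the unique minimizing geodesic $\beta\:[0,1]\to\M$ from $y$ to $z$ (which exists by total normality of $U$), of length $d(y,z)$. Lifting via $\log_x$ yields the curve $\tilde\beta=\log_x\circ\beta$ in the Euclidean vector space $T_x\M$, joining $\log_x y$ to $\log_x z$. Bounding the Euclidean distance between endpoints by the length of $\tilde\beta$ and applying the pointwise estimate gives
\begin{equation*}
\norm{\log_x z-\log_x y}_x\leq\int_0^1\norm{d(\log_x)_{\beta(s)}\dot\beta(s)}_x\,\d s\leq c\int_0^1\norm{\dot\beta(s)}_{\beta(s)}\,\d s=c\,d(y,z),
\end{equation*}
with $c=1$ in case (i) and $c=(\pi-\e)/\sin(\pi-\e)$ in case (ii).

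The delicate point, and the main obstacle I anticipate, is verifying that the geodesic $\beta$ stays throughout in a region on which both $\log_x$ is defined (so that the lift $\tilde\beta$ makes sense) and the Rauch bound applies (so that the curvature hypothesis $\K\leq\mu$ is active along the radial geodesics from $x$ to $\beta(s)$). I expect this to follow from the total normality of $U$, which places $U$ in a normal neighborhood of $x$, combined with the inclusion $U\subset B_r(p)$ controlling curvature; making the argument rigorous without additionally assuming geodesic convexity of $U$ is the step that likely requires the most care.
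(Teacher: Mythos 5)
Your proposal is correct and follows essentially the same route as the paper: the paper likewise derives the pointwise bound $\norm{D_y\log_x(v)}_x \leq \frac{\sqrt{\mu}\,d(x,y)}{\sin(\sqrt{\mu}\,d(x,y))}\norm{v}_y$ (with constant $1$ when $\mu=0$) by inverting the Rauch-type lower bound on $D\exp_x$ quoted from \cite[Corollary 6.6.1]{Jost2017}, and then integrates $\frac{\der}{\der t}\log_x\gamma(t)=D_{\gamma(t)}\log_x(\gamma'(t))$ along the constant-speed minimizing geodesic $\gamma$ from $y$ to $z$, using monotonicity of $\tau\mapsto\tau/\sin\tau$ on $[0,\pi-\e]$ exactly as you do. The ``delicate point'' you flag---that the geodesic from $y$ to $z$ must stay where $\log_x$ is defined and the curvature bound is active---is not treated more carefully in the paper either, which simply applies the pointwise estimate along $\gamma$, so your argument is on the same footing as the published one.
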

\begin{proof}
  This follows from \cite[Corollary 6.6.1]{Jost2017}. We give the details in Appendix \ref{appendix:Lipschitz-2}.
\end{proof}

\begin{rem}
  \label{rem:negative-Lip}
  When $\iM = \infty$ and $\mu_r(p)=0$ for all $p\in \M$ and $r>0$, i.e., $\M$ is globally nonpositively curved, \eqref{eqn:log-diff-mu0} holds for all $x,y,z\in \M$.
\end{rem}
\begin{rem}
  \label{rem:glob-neg-curv}
  If $\M$ is globally nonpositively curved, then the restriction on $U$ in Lemma \ref{lemma:d2-Hessian} in order to get a bounded Hessian simplifies into open and bounded. In Lemma \ref{lemma:Lipschitz-2} the restriction on $U$ simplifies into totally normal neighborhood and bounded.
\end{rem}


\subsection{Local well-posedness}
\label{subsect:fixed-point}

Let us now turn to the local well-posedness of solutions to Equation \eqref{eqn:model}. We consider an additional assumption:
\begin{namedthm}{(Kloc)}
  \namedlabel{hyp:Kloc}{{\bfseries \upshape (Kloc)}} 
  $K\: \M \times \M \to \R$ satisfies \ref{hyp:K} with $g'$ locally Lipschitz continuous. 
\end{namedthm}
\noindent For $\Delta >0$, we write $C_{g'}(\Delta)$ and $L_{g'}(\Delta)$ for the $L^\infty$ norm and the Lipschitz constant of $g'$ on $[0,\Delta^2]$.

\begin{lem}\label{lem:v-Lip}
  Assume the manifold $\M$ and potential $K$ satisfy \ref{hyp:M} and \ref{hyp:Kloc}, and take $U\subset B_{r/2}(p)$ open and geodesically convex for some $p\in\M$ and $r<\iM$ such that \eqref{eqn:diam-U} holds (cf. also Notation \ref{notn:local-curv}). Let $T\in (0,\infty]$ and $\rho\in \Cont([0,T);\P(U))$. Then the time-dependent vector field $v[\rho]$ given by \eqref{eqn:v-field} is bounded and Lipschitz continuous with respect to its first variable, uniformly with respect to its second variable. 
\end{lem}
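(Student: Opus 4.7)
My plan is to write $v[\rho]$ explicitly via \eqref{eqn:gradK-gen} as
\bes
v[\rho](x,t) = 2\int_U g'(d(x,z)^2)\,\log_x z \, \d\rho_t(z),
\ees
which is legitimate because $U\subset B_{r/2}(p)$ with $r<\iM$ ensures $d(x,z)<r<\iM$ for any $x,z\in U$, so $\log_x z$ is defined. Let $\Delta = \Diam(U)$, which is finite and, under the hypotheses, at most $r$. For boundedness, I would use that $\|\log_x z\|_x = d(x,z)\leq \Delta$ and $|g'(d(x,z)^2)|\leq C_{g'}(\Delta)$, so $\|v[\rho](x,t)\|_x \leq 2 C_{g'}(\Delta)\Delta$, uniformly in $(x,t)\in U\times[0,T)$.

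For the Lipschitz estimate in the first variable (in the sense of parallel transport), I would fix $x,y\in U$ and $t\in [0,T)$, and use the linearity and isometric nature of $\Pi_{yx}$ to push the parallel transport inside the integral, yielding
\bes
\Pi_{yx}v[\rho](y,t) = 2\int_U g'(d(y,z)^2)\,\Pi_{yx}\log_y z\, \d\rho_t(z).
\ees
Then I would write the difference $v[\rho](x,t) - \Pi_{yx}v[\rho](y,t)$, add and subtract $g'(d(x,z)^2)\,\Pi_{yx}\log_y z$ inside the integrand, and split it as $I_1 + I_2$, where
\begin{align*}
I_1 &= 2\int_U g'(d(x,z)^2)\bigl(\log_x z - \Pi_{yx}\log_y z\bigr) \d\rho_t(z),\\
I_2 &= 2\int_U \bigl(g'(d(x,z)^2)-g'(d(y,z)^2)\bigr)\Pi_{yx}\log_y z\, \d\rho_t(z).
\end{align*}

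For $I_1$, I would invoke Lemma~\ref{lemma:d2-Hessian} (whose hypotheses are exactly what is being assumed: $U\subset B_{r/2}(p)$, geodesically convex, and satisfying \eqref{eqn:diam-U}) to get $\|\log_x z - \Pi_{yx}\log_y z\|_x \leq \tfrac{L}{2}d(x,y)$, giving $\|I_1\|_x \leq C_{g'}(\Delta)\,L\,d(x,y)$. For $I_2$, I would use the local Lipschitz continuity of $g'$ together with the elementary bound $|d(x,z)^2 - d(y,z)^2| \leq (d(x,z)+d(y,z))\,d(x,y) \leq 2\Delta\,d(x,y)$, plus the isometry $\|\Pi_{yx}\log_y z\|_x = d(y,z) \leq \Delta$, which together yield $\|I_2\|_x \leq 4 L_{g'}(\Delta)\Delta^2\,d(x,y)$. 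Summing, $v[\rho](\cdot,t)$ is Lipschitz uniformly in $t$ with constant $C_{g'}(\Delta)L + 4 L_{g'}(\Delta)\Delta^2$.

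The only delicate point is making sure Lemma~\ref{lemma:d2-Hessian} genuinely applies to $z\mapsto \log_x z$ inside the integrand: the lemma is stated for a fixed endpoint $z$, but the estimate \eqref{eqn:d2-Lipschitz} is used with the roles reversed (fixed $z$, varying $x,y$), which is precisely what \eqref{eqn:d2-Lipschitz} provides. Everything else is a routine add-and-subtract plus Jensen-type integration against the probability measure $\rho_t$; no subtlety is hidden in the $t$-dependence since no regularity of $t\mapsto \rho_t$ is needed for this pointwise-in-$t$ Lipschitz statement.
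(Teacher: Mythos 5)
Your proposal is correct and follows essentially the same route as the paper: boundedness via $\|\grad d_y^2(x)\|_x=2d(x,y)\leq 2\Delta$ and the local bound on $g'$, and the Lipschitz estimate via the same add-and-subtract, using \eqref{eqn:d2-Lipschitz} from Lemma \ref{lemma:d2-Hessian} for one term and the local Lipschitz continuity of $g'$ with $|d(x,z)^2-d(y,z)^2|\leq 2\Delta\,d(x,y)$ for the other, yielding the identical constant $\overbar{L}=C_{g'}(\Delta)L+4L_{g'}(\Delta)\Delta^2$. The only cosmetic difference is that the paper bounds the integrand $\grad K_z(x)-\Pi_{yx}\grad K_z(y)$ pointwise in $z$ before integrating against $\rho_t$, whereas you push $\Pi_{yx}$ inside the integral and split it into $I_1+I_2$; the two are equivalent.
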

\begin{proof}
Write $\Delta$ for $\Diam(U)$. We have, for all $x,y\in U$, 
\begin{align*}
	\norm{\grad K_y(x)}_x &= \norm{g'(d(x,y)^2) \grad d_y^2(x)}_x \leq 2 C_{g'}(\Delta) \Delta,
\end{align*}
where we used the local bound on $g'$ and that $\norm{ \grad d_y^2(x)}_x = 2 d(x,y) \leq 2 \Delta$. Immediately, for all $t\in[0,T)$, it follows
\be\label{eqn:v-bound}
	\| \V[\rho](x,t) \|_x \leq \int_{U} \| \grad K_y(x)\|_x  \d\rho_t(y) \leq 2 C_{g'}(\Delta) \Delta,
\ee
which shows that $\V[\rho]$ is bounded.

Also, for all $x,y,z\in U$, we get
\begin{align*}
	&\norm{\grad K_z (x) - \Pi_{yx} \grad K_z (y))}_x = \norm{g'(d(x,z)^2) \grad  d_z^2(x) - g'(d(y,z)^2) \Pi_{yx} \grad d_z^2(y) }_x\\[3pt]
	&\quad\leq |g'(d(x,z)^2)| \norm{\grad d_z^2(x) - \Pi_{yx}\grad d_z^2(y)}_x + \norm{\grad d_z^2(y)}_y |g'(d(x,z)^2) - g'(d(y,z)^2)|\\[3pt]
	&\quad\leq C_{g'}(\Delta) L\, d(x,y) + 2 d(y,z) L_{g'}(\Delta)  |d(x,z)^2 - d(y,z)^2|,
\end{align*}
where we used the Lipschitz continuity of the vector field $\grad d_z^2$ (equivalently of the logarithm map) given by \eqref{eqn:d2-Lipschitz} and the local bound and Lipschitz continuity of $g'$. Now use $d(y,z) \leq \Delta$ and $$|d(x,z)^2 - d(y,z)^2| = |d(x,z) - d(y,z)| |d(x,z) + d(y,z)| \leq 2 \Delta d (x,y),$$
to get
\[
\norm{\grad K_z (x) - \Pi_{yx} \grad K_z (y))}_x \leq \overbar{L}\, d(x,y) \qquad \text{ for all }x,y,z\in U,
\]
where
\begin{equation}
\label{eqn:Lbar}
\overbar{L} = C_{g'}(\Delta) L + 4 \Delta^2  L_{g'}(\Delta).
\end{equation}
Then, for all $t \in [0,T)$ and $x,y \in U$, we get
\begin{align}
\label{eqn:v-Lip}
	\norm{v[\rho](x,t) - \Pi_{yx} v[\rho](y,t)}_x &\leq \int_U \norm{\grad K_z (x) - \Pi_{yx} \grad K_z (y))}_x \d\rho_t(z) \nonumber \\[3pt]
	&\leq \overbar{L}\, d(x,y),
\end{align}
which shows that $v[\rho]$ is Lipschitz continuous with respect to its first variable (space), uniformly with respect to its second (time).
\end{proof}

\begin{rem}
\label{rem:unif-bounds}
The $L^\infty$ bound and the Lipschitz constant of $\V[\rho]$ in \eqref{eqn:v-bound} and \eqref{eqn:v-Lip} do not depend on the curve $\rho$. This is important for the proof of Theorem \ref{thm:well-posedness}. 
\end{rem}

\begin{lem}\label{lem:grad-lip-2}
  Let $\M$ and $K$ satisfy \ref{hyp:M} and \ref{hyp:Kloc} and $U\subset B_r(p)$ be a totally normal neighborhood for some $p\in\M$ and $r<\iM$, and write $\mu = \mu_r(p)$ (recall Notation \ref{notn:local-curv}). If $\mu > 0$ assume that $\Delta := \Diam(U)$ satisfies $\Delta \leq \frac{\pi - \e}{\sqrt{\mu}}$ for some  $0<\e<\pi$. Let $\rho,\sigma \in \Cont([0,T);\P(U))$. Then, there exists $\Lip>0$ so that
	\begin{equation*}
		\|\V[\rho]-\V[\sigma]\|_{L^\infty(U \times [0,T))} \leq \Lip \, \bd_1(\rho,\sigma).
	\end{equation*}
\end{lem}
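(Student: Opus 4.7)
The plan is to first show that, for each fixed $x\in U$ and $t\in[0,T)$, the map $y \mapsto \grad K_y(x)$ from $(U,d)$ into the normed vector space $(T_x\M,\norm{\cdot}_x)$ is Lipschitz continuous with a constant $\Lip$ that is independent of $x$ and $t$. Granted this, since both $\V[\rho](x,t)$ and $\V[\sigma](x,t)$ belong to the \emph{same} tangent space $T_x\M$, for any transport plan $\pi\in\Pi(\rho_t,\sigma_t)$ one can write
\begin{equation*}
\V[\rho](x,t) - \V[\sigma](x,t) = \int_{U\times U}\bigl(\grad K_{y_2}(x) - \grad K_{y_1}(x)\bigr)\d\pi(y_1,y_2),
\end{equation*}
take norms under the integral, and then take the infimum over $\pi\in\Pi(\rho_t,\sigma_t)$ to obtain $\norm{\V[\rho](x,t) - \V[\sigma](x,t)}_x \leq \Lip\, W_1(\rho_t,\sigma_t) \leq \Lip\, \bd_1(\rho,\sigma)$, which after taking the supremum over $(x,t)\in U\times[0,T)$ is the claim.

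The core step is the uniform Lipschitz bound on $y\mapsto \grad K_y(x)$. Using \eqref{eqn:gradK-gen}, I would decompose, for $y_1,y_2\in U$,
\begin{equation*}
\grad K_{y_1}(x) - \grad K_{y_2}(x) = -2 g'(d(x,y_1)^2)\bigl(\log_x y_1 - \log_x y_2\bigr) - 2\bigl(g'(d(x,y_1)^2) - g'(d(x,y_2)^2)\bigr)\log_x y_2,
\end{equation*}
and estimate the two pieces separately. The first is controlled by $2 C_{g'}(\diam)\,C_L\, d(y_1,y_2)$ via Lemma \ref{lemma:Lipschitz-2}, where $C_L=1$ if $\mu=0$ and $C_L = (\pi-\e)/\sin(\pi-\e)$ if $\mu>0$ (this is precisely where the hypothesis $\diam\leq(\pi-\e)/\sqrt{\mu}$ enters). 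The second is controlled by $4\diam^2 L_{g'}(\diam)\, d(y_1,y_2)$, invoking the local Lipschitz continuity of $g'$ on $[0,\diam^2]$, the elementary estimate $|d(x,y_1)^2-d(x,y_2)^2|\leq 2\diam\, d(y_1,y_2)$ (from the forward and reverse triangle inequalities), and $\norm{\log_x y_2}_x = d(x,y_2)\leq \diam$. Summing gives the uniform constant
\begin{equation*}
\Lip := 2 C_{g'}(\diam)\, C_L + 4\diam^2 L_{g'}(\diam).
\end{equation*}

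The main obstacle is obtaining the Lipschitz bound on $y\mapsto\grad K_y(x)$ \emph{uniformly in $x$}, since the estimate lives in the moving tangent space $T_x\M$. This is exactly the role of Lemma \ref{lemma:Lipschitz-2}, which provides Euclidean-style Lipschitz continuity of the logarithm in its argument — a complement to the parallel-transport Lipschitz continuity of $x\mapsto\log_x z$ from Lemma \ref{lemma:d2-Hessian} that was used in the proof of Lemma \ref{lem:v-Lip}. Once this ingredient is in hand, the transport-plan argument in the first paragraph is essentially a direct vector-valued Kantorovich--Rubinstein estimate, and no further geometric input is needed.
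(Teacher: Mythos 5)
Your proposal is correct and follows essentially the same route as the paper: the same add-and-subtract decomposition of $\grad K_{y_1}(x)-\grad K_{y_2}(x)$ (written via $\log_x$ rather than $\grad d_y^2$, which differ only by the factor $-2$ in \eqref{eqn:grad-d2}), the same use of Lemma \ref{lemma:Lipschitz-2} for the Lipschitz continuity of $z\mapsto\log_x z$, and the same transport-plan (Kantorovich--Rubinstein) estimate, yielding the identical constant $\Lip = 2C_{g'}(\Delta)\ell + 4\Delta^2 L_{g'}(\Delta)$. The only cosmetic difference is that you take an infimum over all plans rather than fixing an optimal one, which changes nothing.
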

\begin{proof}
By Lemma \ref{lemma:Lipschitz-2}, we have
\[
\| \log_x z - \log_x y \|_x \leq  \ell d(y,z) \qquad \text{ for all } x,y,z \in U,
\]
where $\ell =  \frac{\pi -\e}{\sin(\pi - \e)}$ if $\mu >0$ and $\ell = 1$ if $\mu =0$. Hence, using \eqref{eqn:grad-d2} we find
\[
\| \nabla d^2_y(x) - \nabla d^2_z(x) \|_x \leq 2 \ell d(y,z), \qquad \text{ for all } x,y,z \in U.
\]

Since \ref{hyp:Kloc} holds, for all $x,y\in U$ compute
	\begin{align*}
		& \|\nabla K_y(x) - \nabla K_z(x)\|_x = \|g'(d(x,y)^2) \nabla d^2_y(x) - g'(d(x,z)^2) \nabla d^2_z(x)\|_x \nonumber \\[3pt]
		&\qquad \leq |g'(d(x,z)^2)| \|\nabla d^2_y(x) - \nabla d^2_z(x)\|_x + \|\nabla d^2_y(x)\|_x|g'(d(x,y)^2) - g'(d(x,z)^2)|,	
	\end{align*}
where we added and subtracted $g'(d(x,z)^2) \nabla d^2_y(x)$ on the first line and used the triangle inequality. Then, using the bound and Lipschitz constant of $g'$, the fact that $\|\nabla d^2_y(x)\|_x= 2 d(x,y)$ and again the triangle inequality, we find
\begin{align}
 \|\nabla K_y(x) - \nabla K_z(x)\|_x & \leq 2 C_{g'}(\Delta)\ell\, d(y,z) + 2L_{g'}(\Delta) |d(x,y)+d(x,z)| |d(x,y)-d(x,z)| d(x,y) \nonumber \\[2pt]
& \leq (2 C_{g'}(\Delta)\ell + 4 L_{g'}(\Delta) \Delta^2) d(y,z).
	\label{eqn:ineq2-K}	
\end{align}

Now, for $(x,t) \in U \times [0,T)$, take $\pi_t \in \Pi(\rho_t,\sigma_t)$ to be an optimal transport plan between $\rho_t$ and $\sigma_t$, and estimate
	\begin{align*}
		\norm{\V[\rho](x,t) - \V[\sigma](x,t)}_x &= \left \| \int_{U} \nabla K_y(x)\d\rho_t(y) - \int_{U} \nabla K_z(x) \d\sigma_t(z) \right \|_x \\[3pt]
		&= \left \| \int_{U \times U} \nabla K_y(x) \d\pi_t(y,z) - \int_{U \times U} \nabla K_z(x) \d\pi_t(y,z) \right \|_x\\[3pt]
		&\leq \int_{U \times U} \norm{\nabla K_y(x) - \nabla K_z(x)}_x \d\pi_t(y,z).
	\end{align*}
Then, using \eqref{eqn:ineq2-K} we find
	\begin{align}
		\norm{\V[\rho](x,t) - \V[\sigma](x,t)}_x &\leq \Lip  \int_{U \times U} d(y,z) \d\pi_t(y,z) 
= \Lip\, W_1(\rho_t,\sigma_t) \nonumber \\[2pt]
	&\leq \Lip \bd_1(\rho,\sigma), \label{est:LipX}
	\end{align}
where
\begin{equation}
\label{eqn:Lip-Lambda}
\Lip = 2 C_{g'}(\Delta)\ell + 4 L_{g'}(\Delta) \Delta^2.
\end{equation}
Taking the supremum in $(x,t)\in U \times [0,T)$ on the left-hand side of \eqref{est:LipX} gives the result.
\end{proof}

\begin{rem}
  The assumptions on $U$ in Lemma \ref{lem:grad-lip-2} are weaker than those in Lemma \ref{lem:v-Lip}.  
\end{rem}

\begin{thm}[Local Well-Posedness]\label{thm:well-posedness}
	Assume the manifold $\M$ and interaction potential $K$ satisfy \ref{hyp:M} and \ref{hyp:Kloc}, and take $U\subset B_{r/2}(p)$ open and geodesically convex for some $p\in\M$ and $r<\iM$ so that \eqref{eqn:diam-U} holds (cf. Notation \ref{notn:local-curv}). Let $\rho_0 \in \P(U)$. Then, there exist $T>0$ and a unique weak solution in $\Cont([0,T);\P(U))$ starting from $\rho_0$ to the aggregation equation \eqref{eqn:model}.
	\end{thm}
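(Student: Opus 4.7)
The plan is to invoke Banach's fixed-point theorem on the solution map
\begin{equation*}
\Gamma\:\rho \mapsto \bigl(t \mapsto \Psi^t_{\V[\rho]}\#\rho_0\bigr),
\end{equation*}
set up on a suitable closed subset of $(\Cont([0,T];\P(U)),\bd_1)$ for $T>0$ small enough. By Definition \ref{defn:solution}, a fixed point of $\Gamma$ is precisely a weak solution to \eqref{eqn:model} starting from $\rho_0$.

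\emph{Setting up the space.} Since $\M$ is complete and $U\subset B_{r/2}(p)$ with $r<\iM$, the Hopf--Rinow theorem guarantees that $\overbar{U}$ is compact. Assuming (as needed by Theorem \ref{thm:Cauchy-Lip}) that $\supp(\rho_0)$ is a compact subset of $U$, it lies at strictly positive distance $\delta$ from $\partial U$. I would fix a compact set $K\subset U$ containing a $\delta/2$-tubular neighbourhood of $\supp(\rho_0)$ and introduce
\begin{equation*}
\mathcal{X} = \{\rho \in \Cont([0,T];\P(U)) \st \rho|_{t=0}=\rho_0,\ \supp(\rho_t)\subset K \text{ for all } t\in[0,T]\}.
\end{equation*}
Since $K$ is compact, $(\P(K),W_1)$ is complete, and consequently so is $\mathcal{X}$ under $\bd_1$.

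\emph{Step 1 (invariance).} For $\rho\in\mathcal{X}$, Lemma \ref{lem:v-Lip} supplies a uniform bound $M:=2C_{g'}(\Delta)\Delta$ and a uniform spatial Lipschitz constant $\overbar{L}$ on $\V[\rho]$, both independent of $\rho$ by Remark \ref{rem:unif-bounds}. Theorem \ref{thm:Cauchy-Lip} then produces a unique flow $\Psi_{\V[\rho]}$ starting from $\supp(\rho_0)$, and the speed bound $\|\V[\rho]\|_\infty\leq M$ ensures trajectories travel by at most $Mt$. Choosing $T\leq \delta/(2M)$ then guarantees $\supp(\Gamma(\rho)_t)\subset K$ for all $t\in[0,T]$, while Lemma \ref{lem:preliminary}\ref{it:prel2} gives the $W_1$-continuity of $t\mapsto\Gamma(\rho)_t$. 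Hence $\Gamma(\mathcal{X})\subset\mathcal{X}$.

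\emph{Step 2 (contraction).} For $\rho,\sigma\in\mathcal{X}$ I would chain three inequalities. First, Lemma \ref{lem:preliminary}\ref{it:prel1} gives
\begin{equation*}
W_1(\Gamma(\rho)_t,\Gamma(\sigma)_t) \leq \sup_{x\in\supp(\rho_0)} d\bigl(\Psi^t_{\V[\rho]}(x),\Psi^t_{\V[\sigma]}(x)\bigr).
\end{equation*}
Second, Fundamental Lemma I (Lemma \ref{lem:dist-flow-maps}), applied with Lipschitz constant $\overbar{L}$, bounds this by $\tfrac{e^{\overbar{L}T}-1}{\overbar{L}}\|\V[\rho]-\V[\sigma]\|_{L^\infty(U\times[0,T])}$. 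Third, Lemma \ref{lem:grad-lip-2} bounds the latter by $\Lip\,\bd_1(\rho,\sigma)$. Taking the supremum in $t\in[0,T]$ yields
\begin{equation*}
\bd_1(\Gamma(\rho),\Gamma(\sigma)) \leq \tfrac{e^{\overbar{L}T}-1}{\overbar{L}}\Lip\,\bd_1(\rho,\sigma),
\end{equation*}
so further shrinking $T$ so that this prefactor is strictly less than $1$ makes $\Gamma$ a strict contraction on $\mathcal{X}$.

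\emph{Conclusion.} Banach's fixed-point theorem yields a unique $\rho\in\mathcal{X}$ with $\Gamma(\rho)=\rho$, which by the very form of the fixed-point relation is a weak solution in the sense of Definition \ref{defn:solution}. Uniqueness in the full space $\Cont([0,T];\P(U))$ follows by the same estimate, since any candidate solution necessarily lies in $\mathcal{X}$ for $T$ small enough by the uniform velocity bound. The main obstacle I anticipate is Step 1: one must simultaneously (i) obtain uniform-in-$\rho$ bounds and Lipschitz constants for $\V[\rho]$---which is precisely where the geodesic convexity of $U$ and the diameter condition \eqref{eqn:diam-U} enter via Lemma \ref{lem:v-Lip}---and (ii) prevent the flows from pushing mass across $\partial U$ during $[0,T]$. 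Once this geometric invariance is secured, Steps 2 and 3 reduce to assembling the three fundamental lemmas of Section \ref{subsect:fundamental-lemmas}.
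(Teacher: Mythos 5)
Your proposal is correct and follows essentially the same route as the paper: a Banach fixed-point argument for $\Gamma(\sigma)(t)=\Psi^t_{\V[\sigma]}\#\rho_0$, with the contraction obtained by chaining Lemma \ref{lem:preliminary}\ref{it:prel1}, Fundamental Lemma I with the uniform constants from Lemma \ref{lem:v-Lip} (Remark \ref{rem:unif-bounds}), and Lemma \ref{lem:grad-lip-2}, then shrinking $T$ so that $\tfrac{e^{\overbar{L}T}-1}{\overbar{L}}\Lambda<1$. The only difference is cosmetic: where you build an explicit invariant set of curves supported in a compact $K\subset U$ with $T\leq\delta/(2M)$, the paper works on all of $\Cont([0,\tau);\P(U))$ and gets containment in $U$ and a $\sigma$-independent existence time $\tau$ directly from the definition of the flow map and the uniform bounds of Theorem \ref{thm:Cauchy-Lip}.
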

\begin{proof} 
  By Lemma \ref{lem:v-Lip} the velocity field $\V[\sigma]$ (for any fixed continuous curve $\sigma$ in $\P(U)$) satisfies the assumptions of Theorem \ref{thm:Cauchy-Lip}, so that we can infer that $(\V[\sigma],\supp(\rho_0))$ generates a unique maximal flow $\Psi_{v[\sigma]}$ defined on $\supp(\rho_0)\times [0,\tau)$ for some $\tau\in(0,\infty]$. In addition, since the $L^\infty$ bound and the Lipschitz constant in \eqref{eqn:v-bound} and \eqref{eqn:v-Lip} do not depend on the underlying curve (see Remark \ref{rem:unif-bounds}), the maximal time of existence $\tau$ of $\Psi_{v[\sigma]}$ does not depend on $\sigma$ by the Cauchy--Lipschitz theorem (cf. Theorem \ref{thm:Cauchy-Lip}).

  We can then define the map $\Gamma$ by
  \begin{equation}
    \label{eqn:Gamma}
    \Gamma(\sigma)(t) = {\Psi_{\V[\sigma]}^t}\# \rho_0, \qquad \mbox{for all $\sigma \in \Cont([0,\tau);\P(U))$ and $t \in [0,\tau)$}.
  \end{equation}
We will prove that $\Gamma$ defines a map from $\Cont([0,\tau);\P(U))$ into itself and that it has a unique fixed point. This fixed point is the weak solution of \eqref{eqn:model} starting at $\rho_0$.

Note first that $\Gamma$ does indeed map $\Cont([0,\tau);\P(U))$ into itself. For $\sigma \in \Cont([0,\tau);\P(U))$ fixed, by definition of a flow map generated by $(\V[\sigma],\supp(\rho_0))$, we infer that $\Psi_{v[\sigma]}^t(x)\in U$ for all $x\in\supp(\rho_0)$ and $t\in[0,\tau)$. Consequently, $\Gamma(\sigma)(t)$ is supported in $U$ and moreover, it is a probability measure by conservation of mass through the push-forward, so that $\Gamma(\sigma)(t) \in\P(U)$, for all $t\in[0,\tau)$.  Furthermore, the map $t \mapsto \Gamma(\sigma)(t)$ is continuous due to the combination of Lemma \ref{lem:preliminary}\ref{it:prel2} with Lemma \ref{lem:v-Lip}. We conclude that $\Gamma \: (\Cont([0,\tau);\P(U)),\bd_1) \to (\Cont([0,\tau);\P(U)),\bd_1)$.
	
To show that $\Gamma$ is a contraction we will have to restrict the final time to some $T\leq \tau$ to be determined. Let $\rho, \sigma \in \Cont([0,\tau);\P(U))$. Then, for all $t \in [0,\tau)$, we have
	\begin{align}
		W_1({\Psi_{\V[\rho]}^t}\#\rho_0,{\Psi_{\V[\sigma]}^t}\#\rho_0) &\leq \sup_{x \in\supp(\rho_0)} d(\Psi_{\V[\rho]}^t(x),\Psi_{\V[\sigma]}^t(x)) \nonumber \\[3pt]
		&\leq C(t) \| \V[\rho] - \V[\sigma]\|_{L^\infty(U\times [0,\tau))} \nonumber \\[5pt]
		&  \leq C(t) \Lip \bd_1(\rho,\sigma),
		\label{eqn:estW1}
	\end{align}
	where for the first inequality we used Lemma \ref{lem:preliminary}\ref{it:prel1}, for the second inequality we used Lemmas \ref{lem:v-Lip} and \ref{lem:dist-flow-maps}, with
	\bes
		C(t) =  \frac{e^{\bar{L}t} -1}{\bar{L}},
	\ees
and for the last inequality we used Lemma \ref{lem:grad-lip-2}; the Lipschitz constants $\bar{L}$ and $\Lambda$ depend on $\Diam(U)$ and are defined in \eqref{eqn:Lbar} and \eqref{eqn:Lip-Lambda}. Since $t\mapsto C(t)$ is increasing, with $\lim_{t \to 0} C(t) = 0$ and $\Lip$ is independent of time, we can choose $T\leq \tau$ (only depending on $\bar L$ and $\Lambda$) small enough so that 
\[
C(t) \Lip < C(T) \Lip < \overbar{C}, \qquad \text{ for all } t \in [0,T),
\]
for some constant $\overbar{C}<1$. Restricting $T$ accordingly, by taking the supremum over $[0,T)$ in \eqref{eqn:estW1} we find
	\bes
		\bd_1(\Gamma(\rho),\Gamma(\sigma)) \leq \overbar{C} \bd_1(\rho,\sigma),
	\ees
with $\overbar{C}<1$. This shows that the restriction of $\Gamma$ to $(\Cont([0,T);\P(U)),\bd_1)$ is a contraction.

We have thus shown that $\Gamma\: (\Cont([0,T);\P(U)),\bd_1) \to (\Cont([0,T);\P(U)),\bd_1)$ has a unique fixed point, that is, there exists a unique $\rho \in \Cont([0,T);\P(U))$ such that
	\bes
		\rho_t = \Psi_{\V[\rho]}^t \# \rho_0 \quad \mbox{for all $t\in[0,T)$};
	\ees
this fixed point $\rho$ is the desired solution.
\end{proof}

\begin{rem}
  In Theorem \ref{thm:well-posedness}, one could choose $U = B_{r/2}(p)$ as long as this geodesic ball is convex and $r<\iM$, that is, as long as $r<\min(\iM,2\, \cM) = 2\,\cM$ (where the equality comes from the fact that $2\cM \leq \iM$ \cite[Proposition IX.6.1]{Chavel2006}).
\end{rem}


\subsection{Global well-posedness when $\M$ is nonpositively curved}
\label{subsect:global-wp}

We establish here the global well-posedness for \eqref{eqn:model} when $\M$ is nonpositively curved. Recall that in this case $\iM = \infty$; in fact, we have that the exponential map is a diffeomorphism from $T_x\M$ to $\M$ for all $x\in \M$ (see \cite[Corollary 6.9.1]{Jost2017}). In particular, this implies that the velocity field \eqref{eqn:v-field} is well-defined for all $\rho\in \Cont([0,\infty);\P_\infty(\M)$) and $\M$ is geodesically convex. 

We shall focus on the case when $\M$ is nonpositively curved with bounded curvature. We thus consider the following hypothesis, for some $\lambda\leq0$:
\begin{namedthm}{(M)$_\lambda$}
  \namedlabel{hyp:M}{{\bfseries \upshape (M)}} 
  $\M$ satisfies \ref{hyp:M} and its curvature $\K$ is so that $\lambda \leq \K \leq 0$ everywhere on $\M$. 
\end{namedthm}
\noindent We wish to adapt Lemmas \ref{lem:v-Lip} and \ref{lem:grad-lip-2} globally when $\M$ satisfies \ref{hyp:M}$_\lambda$ for some $\lambda\leq0$. If $\M$ is compact, then this is straightforward. Indeed, in this setting Lemmas \ref{lem:v-Lip} and \ref{lem:grad-lip-2} apply directly when replacing $U$ with $\M$ and $T$ with $\infty$ (cf. also Remark \ref{rem:glob-neg-curv}), and the following lemmas hold:

\begin{lem}\label{lem:v-Lip-glob-compact}
  Assume there exists $\lambda\leq0$ such that $\M$ satisfies \ref{hyp:M}$_\lambda$. Suppose $\M$ is compact and $K$ satisfies \ref{hyp:Kloc}. Let $\rho\in \Cont([0,\infty);\P(\M))$. Then the time-dependent vector field $v[\rho]$ given by \eqref{eqn:v-field} is bounded and Lipschitz continuous with respect to its first variable, uniformly with respect to its second variable. 
\end{lem}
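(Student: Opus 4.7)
The plan is to reduce the statement directly to Lemma \ref{lem:v-Lip} with $U = \M$ and $T = \infty$; no new analytic estimate is required, and the entire task consists of checking that, under compactness and nonpositive bounded curvature, $\M$ itself qualifies as an admissible domain in that lemma.

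First I would use hypothesis \ref{hyp:M}$_\lambda$ together with the Cartan--Hadamard theorem (cf. \cite[Corollary 6.9.1]{Jost2017}) to obtain $\iM = \infty$ and the fact that $\exp_x\: T_x\M \to \M$ is a global diffeomorphism for every $x\in\M$; in particular, $\M$ is geodesically convex and any two of its points are joined by a unique minimizing geodesic. Since $\K \leq 0$ everywhere, the local upper bound $\mu_r(p)$ of Notation \ref{notn:local-curv} vanishes for every $p\in\M$ and $r>0$, so the diameter constraint \eqref{eqn:diam-U} appearing in Lemma \ref{lem:v-Lip} is vacuously satisfied (cf. also Remark \ref{rem:glob-neg-curv}).

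Next, compactness of $\M$ gives $\Diam(\M) < \infty$, so fixing any $p\in\M$ and choosing $r > 2\Diam(\M)$ yields $\M = B_{r/2}(p)$ with $r < \iM$. All hypotheses of Lemma \ref{lem:v-Lip} are now satisfied with $U=\M$ and $T=\infty$, and applying that lemma to the given curve $\rho \in \Cont([0,\infty);\P(\M))$ produces directly the claimed boundedness and Lipschitz continuity of $v[\rho]$ in its first variable, uniformly in time.

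The only point worth watching is that the bound and Lipschitz constant delivered by Lemma \ref{lem:v-Lip} depend on the diameter of the domain through $C_{g'}(\Delta)$, $L_{g'}(\Delta)$ and the Hessian bound $L$ of Lemma \ref{lemma:d2-Hessian} appearing in \eqref{eqn:d2-Lipschitz}; here $\Delta = \Diam(\M)$ is finite and $L = 2\sqrt{-\lambda}\,\Delta \coth(\sqrt{-\lambda}\,\Delta)$ is controlled by the global curvature lower bound $\lambda$, so these constants are finite and, as in Remark \ref{rem:unif-bounds}, uniform in $\rho$ and $t$. Hence there is no substantial obstacle: in the compact nonpositively curved setting the local well-posedness machinery of Section \ref{subsect:fixed-point} collapses onto the whole of $\M$.
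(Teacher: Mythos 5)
Your proposal is correct and coincides with the paper's own argument: the paper simply observes that, since $\M$ is compact and nonpositively curved (so $\iM=\infty$, $\M$ is geodesically convex, and the diameter restriction \eqref{eqn:diam-U} is vacuous as $\mu=0$), Lemma \ref{lem:v-Lip} applies directly with $U$ replaced by $\M$ and $T$ by $\infty$, exactly as you do. Your additional check that the constants depend only on $\Diam(\M)$ and $\lambda$ is consistent with Remarks \ref{rem:glob-neg-curv} and \ref{rem:unif-bounds} and adds nothing that conflicts with the paper.
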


\begin{lem}\label{lem:grad-lip-2-glob-compact}
  Assume $\K\leq0$. Suppose $\M$ satisfies \ref{hyp:M} and is compact and suppose $K$ satisfies \ref{hyp:Kloc}. Let furthermore $\rho,\sigma\in \Cont([0,\infty);\P(\M))$. Then, there exists $\Lip>0$ so that
  \begin{equation*}
    \|\V[\rho]-\V[\sigma]\|_{L^\infty(\M \times [0,\infty))} \leq \Lip \, \bd_1(\rho,\sigma).
  \end{equation*}
\end{lem}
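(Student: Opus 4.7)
The plan is to mimic the proof of Lemma \ref{lem:grad-lip-2} almost verbatim, replacing the neighbourhood $U$ with the whole manifold $\M$. The two ingredients that made that proof work locally were (i) a global Lipschitz estimate on the logarithm map in its second argument, and (ii) the finiteness of the diameter of the domain, which guaranteed that the local bound and local Lipschitz constants $C_{g'}(\Delta)$ and $L_{g'}(\Delta)$ of $g'$ on $[0,\Delta^2]$ could be used uniformly. Here, both ingredients remain available: since $\M$ is globally nonpositively curved and complete and simply connected (so $\iM=\infty$), Remark \ref{rem:negative-Lip} delivers
\[
\|\log_x y - \log_x z\|_x \leq d(y,z), \qquad \text{for all } x,y,z\in\M,
\]
and since $\M$ is compact, $\Delta := \Diam(\M)$ is finite so that $C_{g'}(\Delta)$ and $L_{g'}(\Delta)$ are well-defined finite constants.

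With those two facts in hand, I would first bound, for arbitrary $x,y,z\in\M$, the quantity $\|\nabla K_y(x) - \nabla K_z(x)\|_x$ by adding and subtracting $g'(d(x,z)^2)\nabla d^2_y(x)$, applying the triangle inequality, and invoking (i) together with the local Lipschitz continuity and boundedness of $g'$ on $[0,\Delta^2]$; as in \eqref{eqn:ineq2-K}, this yields
\[
\|\nabla K_y(x) - \nabla K_z(x)\|_x \leq \bigl(2C_{g'}(\Delta) + 4L_{g'}(\Delta)\Delta^2\bigr)\, d(y,z).
\]
Then, given $\rho_t,\sigma_t\in\P(\M)$, I would introduce an optimal transport plan $\pi_t\in\Pi(\rho_t,\sigma_t)$ for $W_1$, write
\[
\V[\rho](x,t) - \V[\sigma](x,t) = \int_{\M\times\M}\bigl(\nabla K_y(x) - \nabla K_z(x)\bigr)\, \d\pi_t(y,z),
\]
and estimate the norm inside the integral using the previous inequality to get $\|\V[\rho](x,t) - \V[\sigma](x,t)\|_x \leq \Lambda\, W_1(\rho_t,\sigma_t)$ with $\Lambda = 2C_{g'}(\Delta) + 4L_{g'}(\Delta)\Delta^2$. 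Taking the supremum over $(x,t)\in \M\times[0,\infty)$ on the left and using $W_1(\rho_t,\sigma_t)\leq \bd_1(\rho,\sigma)$ on the right yields the claim.

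There is essentially no obstacle beyond pointing out that compactness plus global nonpositive curvature upgrade the local hypotheses of Lemma \ref{lem:grad-lip-2} (totally normal neighbourhood, bounded diameter, control on $\mu$) to global ones on all of $\M$; the rest of the argument is identical. One minor point worth noting explicitly is that $\mu = 0$ everywhere makes the constant $\ell$ of Lemma \ref{lemma:Lipschitz-2} equal to $1$ (rather than the $(\pi-\e)/\sin(\pi-\e)$ that appears in the positively curved case), which is why the final constant $\Lambda$ above matches \eqref{eqn:Lip-Lambda} with $\ell=1$.
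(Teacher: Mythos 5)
Your argument is correct and is exactly the paper's route: the paper simply observes that, since nonpositive curvature gives $\iM=\infty$ and makes $\M$ a (geodesically convex) totally normal neighbourhood while compactness gives finite diameter, Lemma \ref{lem:grad-lip-2} applies verbatim with $U=\M$, $T=\infty$ and $\mu=0$ (cf. Remarks \ref{rem:negative-Lip} and \ref{rem:glob-neg-curv}), which is precisely what you spell out, down to the constant $\Lambda = 2C_{g'}(\Delta) + 4L_{g'}(\Delta)\Delta^2$ with $\ell=1$. No gaps.
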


When $\M$ is unbounded, Lemmas \ref{lem:v-Lip} and \ref{lem:grad-lip-2} do not adapt this easily. Consider in this case the following hypothesis on $K$, for some $\lambda\leq0$:

\begin{namedthm}{(Kglob)$_\lambda$}
  \namedlabel{hyp:Kglob}{{\bfseries \upshape (Kglob)}} 
  $K\: \M \times \M \to \R$ satisfies \ref{hyp:K} with $g$ such that, for some constant $A_{g'}>0$,
  \begin{equation*}
    \abs{g'(r^2)r - g'(s^2)s} \leq A_{g'} \abs{r-s}, \qquad \text{for all $r,s\geq0$}.
  \end{equation*}
  When $\lambda<0$, it further holds that
  \begin{equation*}
    \abs{g'(r^2)}r \leq A_{g'}, \qquad \text{for all $r\geq0$}.
  \end{equation*}
\end{namedthm}
\noindent With this notation, \ref{hyp:Kglob}$_0$ refers to the first bound only in \ref{hyp:Kglob}$_\lambda$.

The first bound in \ref{hyp:Kglob}$_\lambda$ means that the function $r\mapsto g'(r^2)r$ is globally Lipschitz continuous; it implies in particular that $g'$ is globally bounded by $A_{g'}$. The second bound in \ref{hyp:Kglob}$_\lambda$, in case $\lambda<0$, means that $r\mapsto g'(r^2)r$ is globally bounded; it also says that $g'(r)$ must decrease at least as fast as $1/\sqrt{r}$ as $r\to\infty$.

The Lipschitz condition in \ref{hyp:Kglob}$_\lambda$ is consistent with the classical Cauchy--Lipschitz theory in Euclidean space. Indeed, in this case this Lipschitz condition is enough to get the global Lipschitz continuity of the velocity field (and the local boundedness that follows from it) in order to obtain global well-posedness of the flow maps (cf. Theorem \ref{thm:Cauchy-Lip-glob}). When the space is negatively curved, the Lipschitz condition in \ref{hyp:Kglob}$_\lambda$ is not sufficient anymore to get the global Lipschitz continuity of the velocity field. This stems from \eqref{eqn:d2-Lipschitz}, where the Lispchitz constant depends on the distance between the points considered (as opposed to the flat case discussed in Remark \ref{rem:Euclidean-Lip})---the global upper bound condition in \ref{hyp:Kglob}$_\lambda$ helps counterbalance this effect. All this can be seen in details in the proof of Lemma \ref{lem:v-Lip-glob} below.

\begin{rem}
  \label{rem:glob-Lip-g}
  One can check that the Lipschitz condition in \ref{hyp:Kglob}$_\lambda$ implies
  \begin{equation*}
    \abs{g'(r^2) - g'(s^2)}s \leq 2A_{g'} \abs{r-s}, \qquad \text{for all $r,s\geq0$}.
  \end{equation*}
\end{rem}

\begin{lem}\label{lem:v-Lip-glob}
  Assume there exists $\lambda\leq0$ such that $\M$ satisfies \ref{hyp:M}$_\lambda$. Suppose $\M$ is unbounded and $K$ satisfies \ref{hyp:Kglob}$_\lambda$. Let $\rho\in \Cont([0,\infty);\P_\infty(\M))$. Then the time-dependent vector field $v[\rho]$ given by \eqref{eqn:v-field} is locally bounded and globally Lipschitz continuous with respect to its first variable, uniformly with respect to its second variable.
\end{lem}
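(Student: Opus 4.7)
The plan is to extend the strategy of Lemma \ref{lem:v-Lip} from bounded subsets to the unbounded nonpositively curved manifold $\M$, splitting into the cases $\lambda=0$ and $\lambda<0$. Under \ref{hyp:M}$_\lambda$, the Cartan--Hadamard theorem gives $\iM=\infty$ and the global convexity of the distance function $d(\cdot,z)$ for every $z\in\M$, which I will exploit throughout.

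For boundedness, I start from $\norm{v[\rho](x,t)}_x\leq 2\int_\M |g'(d(x,y)^2)|\,d(x,y)\,\d\rho_t(y)$. When $\lambda<0$, the global inequality $|g'(r^2)|r\leq A_{g'}$ from \ref{hyp:Kglob}$_\lambda$ yields the uniform bound $\norm{v[\rho](x,t)}_x\leq 2A_{g'}$ for all $x\in\M$ and $t\geq 0$. When $\lambda=0$, taking $s=0$ in the Lipschitz condition of \ref{hyp:Kglob}$_0$ gives $|g'(r^2)|r\leq A_{g'} r$; then for $x$ in a compact set $Q$ the integral is finite (since $\rho_t\in\P_\infty(\M)$) and controlled by $\Diam(Q)$ plus the first moment of $\rho_t$, giving local boundedness.

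For the Lipschitz estimate, I will mimic Lemma \ref{lem:v-Lip} by writing $\grad K_z(x)=g'(d(x,z)^2)\grad d_z^2(x)$ and decomposing
\begin{align*}
\norm{\grad K_z(x) - \Pi_{yx}\grad K_z(y)}_x
&\leq |g'(d(x,z)^2)-g'(d(y,z)^2)|\,\norm{\grad d_z^2(x)}_x\\
&\quad + |g'(d(y,z)^2)|\,\norm{\grad d_z^2(x)-\Pi_{yx}\grad d_z^2(y)}_x.
\end{align*}
The first summand yields $4A_{g'}\,d(x,y)$ via Remark \ref{rem:glob-Lip-g}. For the second, since Lemma \ref{lemma:d2-Hessian} does not apply directly when $\M$ is unbounded, I will integrate the pointwise Hessian bound \eqref{eqn:bHessian} (with $\mu=0$) along the geodesic from $x$ to $y$; combined with the convexity of $d(\cdot,z)$ and the inequality $r\coth r\leq 1+r$, this produces
\bes
\norm{\grad d_z^2(x)-\Pi_{yx}\grad d_z^2(y)}_x \leq 2\bigl(1+\sqrt{-\lambda}\,\max(d(x,z),d(y,z))\bigr)d(x,y).
\ees

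The main obstacle is the case $\lambda<0$, where $\max(d(x,z),d(y,z))$ is unbounded in $z$. I will control $|g'(d(y,z)^2)|\max(d(x,z),d(y,z))$ by combining $|g'(r^2)|r\leq A_{g'}$ with the triangle inequality $d(x,z)\leq d(y,z)+d(x,y)$, obtaining a bound of the form $A_{g'}(2+d(x,y))$; the full integrand estimate then has the shape $C_1\,d(x,y)+C_2\,d(x,y)^2$. To eliminate the quadratic term I will employ a dichotomy on $d(x,y)$: when $d(x,y)\leq 1$, absorb $d(x,y)^2\leq d(x,y)$; when $d(x,y)>1$, fall back on the uniform bound $\norm{v[\rho](x,t)}_x\leq 2A_{g'}$ from the boundedness step to conclude $\norm{v[\rho](x,t)-\Pi_{yx}v[\rho](y,t)}_x\leq 4A_{g'}\leq 4A_{g'}\,d(x,y)$ directly. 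Integrating the pointwise-in-$z$ estimate against $\rho_t$ and combining the two regimes produces the desired uniform-in-$t$ global Lipschitz constant.
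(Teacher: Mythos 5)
Your proposal is correct and follows the same overall strategy as the paper's proof: mimic Lemma \ref{lem:v-Lip} under the global hypotheses, with the same add-and-subtract decomposition of $\grad K_z(x)-\Pi_{yx}\grad K_z(y)$ and Remark \ref{rem:glob-Lip-g} for the $g'$-difference term. Where you genuinely diverge is in taming the curvature-dependent factor when $\lambda<0$. The paper applies \eqref{eqn:d2-Lipschitz} on an arbitrary bounded, open, convex $U$ (legitimate by Remark \ref{rem:glob-neg-curv}, so Lemma \ref{lemma:d2-Hessian} is in fact usable on bounded subsets even though $\M$ is unbounded), obtains the factor $g'(\Delta^2)L(\Delta)$ with $L(\Delta)=2\sqrt{-\lambda}\,\Delta\coth(\sqrt{-\lambda}\,\Delta)$, bounds it by $2A_{g'}\coth(1)\max(1,\sqrt{-\lambda})$ by splitting on whether $\Delta\leq 1/\sqrt{-\lambda}$, and then uses the arbitrariness of $U$. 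You instead keep the pointwise Hessian bound \eqref{eqn:bHessian}, use $r\coth r\leq 1+r$, accept a term quadratic in $d(x,y)$, and remove it by a dichotomy on $d(x,y)$, falling back in the far regime on the uniform bound $\norm{v[\rho]}\leq 2A_{g'}$, which is available precisely when $\lambda<0$ thanks to the second condition in \ref{hyp:Kglob}$_\lambda$. Both routes deliver a global Lipschitz constant independent of $t$ and of $\rho$ (which is what the well-posedness argument needs); yours estimates only the integrated field rather than each kernel $\grad K_z$ in the regime $d(x,y)>1$, which suffices, and it arguably avoids a loose step in the paper's chain (the replacement of $|g'(d(x,z)^2)|$ by $|g'(\Delta^2)|$, unjustified without monotonicity of $g'$; you keep the correct pairing $|g'(d(y,z)^2)|\max(d(x,z),d(y,z))$ and use the triangle inequality). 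One cosmetic mismatch: for $\lambda=0$ your local bound involves the first moment of $\rho_t$ and hence is not uniform in $t$, whereas the paper asserts uniform-in-$t$ local boundedness; but the paper's own estimate implicitly confines the mass of $\rho_t$ to the bounded set $U$, and uniformity cannot hold for arbitrary $\rho\in \Cont([0,\infty);\P_\infty(\M))$ under \ref{hyp:Kglob}$_0$ alone (take $K=d^2$ and a Dirac escaping to infinity), so this is an imprecision in the statement's phrasing rather than a gap in your argument; for $\lambda<0$ your bound $2A_{g'}$ is global and uniform, matching the paper's parenthetical remark.
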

\begin{proof}
We follow closely the proof of Lemma \ref{lem:v-Lip}. Let $U\subset\M$ be bounded, open and convex with $\Delta = \Diam(U)$. We have, for all $x,y\in U$, 
\begin{align*}
  \norm{\grad K_y(x)}_x &= \norm{g'(d(x,y)^2) \grad d_y^2(x)}_x \leq 2 A_{g'} \Delta,
\end{align*}
which yields the local boundedness of $\V[\rho](\cdot,t)$, uniformly with respect to $t\in[0,\infty)$. (Note that in case $\lambda<0$, this bound on $\V[\rho](\cdot,t)$ is actually global thanks to the second condition in \ref{hyp:Kglob}$_\lambda$.)

For all $x,y,z\in U$, we get
\begin{align*}
	&\norm{\grad K_z (x) - \Pi_{yx} \grad K_z (y))}_x = \norm{g'(d(x,z)^2) \grad  d_z^2(x) - g'(d(y,z)^2) \Pi_{yx} \grad d_z^2(y) }_x\\[3pt]
        &\quad \leq |g'(d(x,z)^2)| \norm{\grad d_z^2(x) - \Pi_{yx}\grad d_z^2(y)}_x + \norm{\grad d_z^2(y)}_y |g'(d(x,z)^2) - g'(d(y,z)^2)|\\[3pt]
        &\quad\leq |g'(\Delta^2)| L(\Delta)\, d(x,y) + 4A_{g'} \abs{d(x,z) - d(y,z)}\\[3pt]
        &\quad\leq (|g'(\Delta^2)| L(\Delta) + 4A_{g'})\, d(x,y),
\end{align*}
where for the first inequality we added and subtracted $g'(d(x,z)^2)  \Pi_{yx} \grad d_z^2(y)$ and used the triangle inequality; we then used the Lipschitz continuity of the vector field $\grad d_z^2$ given by \eqref{eqn:d2-Lipschitz} with $L(\Delta):=2\sqrt{-\lambda} \Delta\coth(\sqrt{-\lambda}\Delta)$, the fact that $\norm{\grad d_z^2(y)}_y = 2d(y,z)$ and Remark \ref{rem:glob-Lip-g} for the second inequality, and the reverse triangle inequality on $d$ for the third inequality. We notice that if $\lambda=0$, then $L(\Delta) = 2$ and so $|g'(\Delta^2)|L(\Delta)\leq 2A_{g'}$. If $\lambda<0$, we have
\begin{align*}
  g'(\Delta^2) L(\Delta) &= 2g'(\Delta^2)\sqrt{-\lambda}\Delta\coth(\sqrt{-\lambda}\Delta)\\[2pt]
  &\leq 
  \begin{cases}
    2g'(\Delta^2)\coth(1) & \text{if $\Delta \leq \frac{1}{\sqrt{-\lambda}}$}\\[2pt]
    2g'(\Delta^2) \sqrt{-\lambda}\Delta \coth(1) & \text{if $\Delta > \frac{1}{\sqrt{-\lambda}}$}
  \end{cases}\\[2pt]
  &\leq 
    \begin{cases}
    2 A_{g'}\coth(1) & \text{if $\Delta \leq \frac{1}{\sqrt{-\lambda}}$}\\[2pt]
    2 A_{g'}\sqrt{-\lambda}\coth(1) & \text{if $\Delta > \frac{1}{\sqrt{-\lambda}}$}
  \end{cases}\\
  &\leq 2 A_{g'} \coth(1)\max(1,\sqrt{-\lambda}),
\end{align*}
where for the first inequality we used that $r\coth(r)$ and $\coth(r)$ are bounded by $\coth(1)$, respectively for $r\leq 1$ and for $r>1$, and for the second inequality we used the second bound condition in \ref{hyp:Kglob}$_\lambda$. Thus, because $\coth(1)\max(1,\sqrt{-\lambda})\geq1$, in either case $\lambda=0$ or $\lambda<0$ we get
\begin{equation*}
  g'(\Delta^2) L(\Delta) \leq 2 A_{g'} \coth(1)\max(1,\sqrt{-\lambda}).
\end{equation*}

Since the right-hand side above is independent of $U$, and $U$ is any bounded, open, convex subset of $\M$, we deduce that for all $x,y,z\in\M$ we have
\begin{align*}
	&\norm{\grad K_z (x) - \Pi_{yx} \grad K_z (y))}_x \leq 2A_{g'} (\coth(1)\max(1,\sqrt{-\lambda}) + 2)\, d(x,y),
\end{align*}
Then we conclude the proof, with Lipschitz constant $\overbar L = 2A_{g'} (\coth(1)\max(1,\sqrt{-\lambda}) + 2)$.
\end{proof}

\begin{lem}\label{lem:grad-lip-2-glob}
  Assume $\K \leq 0$. Suppose $\M$ satisfies \ref{hyp:M} and is unbounded and suppose $K$ satisfies \ref{hyp:Kglob}$_0$. Let furthermore $\rho,\sigma \in \Cont([0,\infty);\P_\infty(\M))$. Then, there exists $\Lip>0$ so that
  \begin{equation*}
    \|\V[\rho]-\V[\sigma]\|_{L^\infty(\M \times [0,\infty))} \leq \Lip \, \bd_1(\rho,\sigma).
  \end{equation*}
\end{lem}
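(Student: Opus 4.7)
The plan is to mirror the proof of Lemma \ref{lem:grad-lip-2}, but replace the local bound $C_{g'}(\Delta)$ (which would blow up as $\Delta\to\infty$) by a uniform constant coming from the global Lipschitz hypothesis \ref{hyp:Kglob}$_0$, and replace the local Lipschitz estimate on $z\mapsto\log_x z$ by its globally valid $\mu=0$ counterpart. Since $\M$ is complete, simply connected and globally nonpositively curved, the Cartan--Hadamard theorem gives $\iM=\infty$ and $\log_x z$ is defined for every $x,z\in\M$; Remark \ref{rem:negative-Lip} then provides the clean global estimate
\[
	\|\log_x y - \log_x z\|_x \leq d(y,z), \qquad \text{for all $x,y,z\in\M$}.
\]

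Two elementary consequences of \ref{hyp:Kglob}$_0$ will do the rest of the work. Setting $s=0$ in the Lipschitz inequality for $r\mapsto g'(r^2)r$ yields $|g'(r^2)|\leq A_{g'}$ for all $r\geq 0$; and adding and subtracting $g'(s^2)r$ inside $g'(r^2)r - g'(s^2)s$ gives the symmetric variant of Remark \ref{rem:glob-Lip-g},
\[
	|g'(r^2) - g'(s^2)|\, r \leq 2 A_{g'} |r-s|, \qquad \text{for all $r,s\geq 0$}.
\]
For fixed $x,y,z\in\M$ I would then use the decomposition
\[
\nabla K_y(x) - \nabla K_z(x) = -2\bigl(g'(d(x,y)^2) - g'(d(x,z)^2)\bigr)\log_x y - 2\, g'(d(x,z)^2)\bigl(\log_x y - \log_x z\bigr),
\]
bound the first piece by $4 A_{g'}|d(x,y)-d(x,z)|\leq 4 A_{g'}\, d(y,z)$ using the symmetric estimate, the identity $\|\log_x y\|_x = d(x,y)$ and the reverse triangle inequality, and bound the second piece by $2 A_{g'}\, d(y,z)$ using $|g'(\cdot)|\leq A_{g'}$ together with the displayed logarithm estimate. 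This produces the global, position-independent pointwise bound $\|\nabla K_y(x) - \nabla K_z(x)\|_x \leq 6 A_{g'}\, d(y,z)$ for all $x,y,z\in\M$.

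The conclusion then copies verbatim the closing lines of Lemma \ref{lem:grad-lip-2}: for fixed $(x,t)\in\M\times[0,\infty)$, pick an optimal transport plan $\pi_t \in \Pi(\rho_t,\sigma_t)$ (which makes sense since $\rho_t,\sigma_t \in \P_\infty(\M)$), rewrite $v[\rho](x,t) - v[\sigma](x,t)$ as an integral of $\nabla K_y(x) - \nabla K_z(x)$ against $\pi_t$, integrate the pointwise bound to obtain $\leq 6 A_{g'}\, W_1(\rho_t,\sigma_t) \leq 6 A_{g'}\, \bd_1(\rho,\sigma)$, and take the supremum over $(x,t)$, giving the result with $\Lip = 6 A_{g'}$. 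The only real subtlety — and the conceptual heart of the argument — is recognising that the combination of $\K\leq 0$ and the single Lipschitz condition in \ref{hyp:Kglob}$_0$ already keeps both of the ``coefficient $\times$ distance'' combinations appearing in $\nabla K$ uniformly controlled without any restriction on the diameter of the support; in particular, the second bound of \ref{hyp:Kglob}$_\lambda$ (crucial in Lemma \ref{lem:v-Lip-glob} to counteract the divergent Hessian constant $L(\Delta)$ when $\lambda<0$) is not needed here because the upper curvature bound $\mu=0$ already provides a Euclidean-type Lipschitz estimate for the logarithm difference.
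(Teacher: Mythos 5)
Your proposal is correct and follows essentially the same route as the paper: the global $\mu=0$ Lipschitz estimate for $z\mapsto\log_x z$ (Lemma \ref{lemma:Lipschitz-2} with Remark \ref{rem:negative-Lip}), the same add-and-subtract decomposition of $\nabla K_y(x)-\nabla K_z(x)$, the bound $|g'|\leq A_{g'}$ and Remark \ref{rem:glob-Lip-g} with the reverse triangle inequality, and the optimal-plan argument from Lemma \ref{lem:grad-lip-2}, yielding the same constant $\Lambda=6A_{g'}$. Your closing observation that only the first bound of \ref{hyp:Kglob}$_\lambda$ is needed here matches the paper's use of \ref{hyp:Kglob}$_0$.
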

\begin{proof}
  We follow closely the proof of Lemma \ref{lem:grad-lip-2}. By Lemma \ref{lemma:Lipschitz-2} and Remark \ref{rem:negative-Lip}, recall that
\begin{equation}
\label{eq:Lip-d2-proof}
\| \nabla d^2_y(x) - \nabla d^2_z(x) \|_x \leq 2 d(y,z), \qquad \text{ for all } x,y,z \in \M.
\end{equation}
For all $x,y,z\in \M$ compute
	\begin{align*}
		& \|\nabla K_y(x) - \nabla K_z(x)\|_x = \|g'(d(x,y)^2) \nabla d^2_y(x) - g'(d(x,z)^2) \nabla d^2_z(x)\|_x\\[3pt]
          &\quad  \leq |g'(d(x,z)^2)| \|\nabla d^2_y(x) - \nabla d^2_z(x)\|_x + \|\nabla d^2_y(x)\|_x|g'(d(x,y)^2) - g'(d(x,z)^2)|\\[3pt]
          &\quad\leq 2A_{g'} \, d(y,z) + 4A_{g'}\abs{d(x,y)-d(x,z)}\\[3pt]
          &\quad \leq 6 A_{g'} \, d(y,z),	
	\end{align*}
where for the first inequality we added and subtracted $g'(d(x,y)^2)  \nabla d^2_z(x)$ and used triangle inequality; we then used \eqref{eq:Lip-d2-proof}, the fact that $\norm{\nabla d^2_y(x)}= 2 d(x,y)$ and Remark \ref{rem:glob-Lip-g} for the second inequality, and the reverse triangle inequality on $d$ for the third inequality. Following the proof of Lemma \ref{lem:grad-lip-2}, we conclude with $\Lambda = 6 A_{g'}$.
\end{proof}

In addition to global well-posedness, the following theorem shows that the support of the global solution is contained in an increasing geodesic ball centred around the initial data.

\begin{thm}[Global Well-Posedness on Nonpositively Curved Manifold]
  \label{thm:well-posedness-glob}
  Assume that there exists $\lambda\leq0$ such that $\M$ satisfies \ref{hyp:M}$_\lambda$. Suppose either that $\M$ is compact and $K$ satisfies \ref{hyp:Kloc}, or that $\M$ is unbounded and $K$ satisfies \ref{hyp:Kglob}$_\lambda$. Let $\rho_0 \in \P_\infty(\M)$. Then, there exists a unique weak solution in $\Cont([0,\infty);\P_\infty(\M))$ starting from $\rho_0$ to the aggregation equation \eqref{eqn:model}. Moreover, there exist a nondecreasing function $R\:[0,\infty) \to \R$ and a point $p\in \supp(\rho_0)$ such that 
  \begin{equation*}
    \supp(\rho_t) \subset B_{R(t)}(p), \qquad \text{for all $t\geq0$}.
  \end{equation*}
\end{thm}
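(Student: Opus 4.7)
The plan is to adapt the fixed-point argument of Theorem \ref{thm:well-posedness} to global time, replacing the local Cauchy--Lipschitz Theorem \ref{thm:Cauchy-Lip} by its global convex version Theorem \ref{thm:Cauchy-Lip-glob} and replacing the local Lipschitz estimates on $\V[\rho]$ by the global analogues from Lemmas \ref{lem:v-Lip-glob-compact}--\ref{lem:grad-lip-2-glob-compact} in the compact case, and Lemmas \ref{lem:v-Lip-glob}--\ref{lem:grad-lip-2-glob} in the unbounded case.

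First I would fix $T\in(0,\infty)$. Since $\M$ is geodesically convex with $\iM=\infty$, the field $\V[\sigma]$ is well defined on $\M\times[0,T)$ for every $\sigma \in \Cont([0,T);\P_\infty(\M))$, and by the relevant lemma it is globally Lipschitz in space (uniformly in time) and locally bounded; Theorem \ref{thm:Cauchy-Lip-glob} then produces a unique global flow $\Psi_{\V[\sigma]}$. Setting $\Gamma(\sigma)(t) = \Psi_{\V[\sigma]}^t \# \rho_0$ defines a map $\Gamma\colon \Cont([0,T);\P_\infty(\M)) \to \Cont([0,T);\P_\infty(\M))$, because $\supp\Gamma(\sigma)(t) = \Psi_{\V[\sigma]}^t(\supp\rho_0)$ is compact and $t\mapsto\Gamma(\sigma)(t)$ is continuous by Lemma \ref{lem:preliminary}\ref{it:prel2}.

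Next I would establish contraction using the weighted metric $\bd_1^\alpha(\rho,\sigma) = \sup_{t\in[0,T)} e^{-\alpha t} W_1(\rho_t,\sigma_t)$ for a suitably chosen $\alpha>0$. Sharpening the Gronwall step at the end of Lemma \ref{lem:dist-flow-maps} with a time-dependent source yields, for $x\in\supp\rho_0$,
\bes
d(\Psi_{\V[\rho]}^t(x),\Psi_{\V[\sigma]}^t(x)) \leq \int_0^t e^{\overbar{L}(t-s)} \norm{\V[\rho](\cdot,s) - \V[\sigma](\cdot,s)}_{L^\infty(\M)} \d s.
\ees
Combining this with Lemma \ref{lem:preliminary}\ref{it:prel1} and the pointwise-in-time estimate $\norm{\V[\rho](\cdot,s)-\V[\sigma](\cdot,s)}_{L^\infty(\M)} \leq \Lip\, W_1(\rho_s,\sigma_s)$, which is visible inside the proofs of Lemmas \ref{lem:grad-lip-2-glob-compact} and \ref{lem:grad-lip-2-glob}, then multiplying by $e^{-\alpha t}$, produces
\bes
\bd_1^\alpha(\Gamma(\rho),\Gamma(\sigma)) \leq \frac{\Lip}{\alpha-\overbar{L}}\, \bd_1^\alpha(\rho,\sigma),
\ees
a strict contraction for any $\alpha > \overbar L + \Lip$. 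Banach's theorem yields a unique fixed point on $[0,T)$, and since $T$ is arbitrary, uniqueness on overlapping intervals stitches these together into a unique global solution $\rho\in\Cont([0,\infty);\P_\infty(\M))$.

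For the support estimate, fix $p\in\supp\rho_0$ and set $q(t)=\Psi_{\V[\rho]}^t(x)$ for $x\in\supp\rho_0$, so that $\tfrac{\der}{\der t} d(p,q(t)) \leq \norm{\V[\rho](q(t),t)}_{q(t)}$. In the compact case, and in the unbounded case with $\lambda<0$ where the second bound of \ref{hyp:Kglob}$_\lambda$ forces $\V[\rho]$ to be globally bounded by some $M$, one immediately obtains $\supp\rho_t \subset B_{\Diam(\supp\rho_0)+Mt}(p)$. In the remaining unbounded flat case ($\lambda=0$, $\K\equiv0$), only the bound $|g'(r^2)|\leq A_{g'}$ is available, so $\norm{\V[\rho](q(t),t)}_{q(t)} \leq 2A_{g'}\int_\M d(q(t),y)\d\rho_t(y) \leq 4A_{g'} R(t)$ with $R(t):=\max_{x\in\supp\rho_0} d(p,\Psi_{\V[\rho]}^t(x))$, and Gronwall yields the nondecreasing bound $R(t) \leq \Diam(\supp\rho_0)\, e^{4A_{g'}t}$. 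The main obstacle is the contraction step: the naive metric $\bd_1$ yields a Lipschitz constant of order $(e^{\overbar L T}-1)\Lip/\overbar L$, which blows up exponentially in $T$, so the weighted-metric device is essential to close the argument in one shot; the unbounded flat case additionally requires the Gronwall argument above because $\V[\rho]$ is only locally bounded there.
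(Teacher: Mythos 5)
Your proposal is correct in substance and uses the same main ingredients as the paper (the global Lemmas \ref{lem:v-Lip-glob-compact}--\ref{lem:grad-lip-2-glob-compact} and \ref{lem:v-Lip-glob}--\ref{lem:grad-lip-2-glob}, the global Cauchy--Lipschitz Theorem \ref{thm:Cauchy-Lip-glob}, and a Banach fixed point for the push-forward map $\Gamma$), but your globalization mechanics differ from the paper's. The paper simply reruns the local contraction argument of Theorem \ref{thm:well-posedness} and observes that, since $\overbar L$ and $\Lambda$ no longer depend on $\supp(\rho_0)$ or its diameter, the contraction time is uniform, so the construction can be iterated indefinitely; the support bound is then read off from the iterative construction of the global flow in the proof of Theorem \ref{thm:Cauchy-Lip-glob}. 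You instead obtain a contraction on an arbitrary finite horizon $[0,T)$ in one shot via a Bielecki-type weighted metric $\bd_1^\alpha$ (after sharpening the Gronwall step of Lemma \ref{lem:dist-flow-maps} to a time-dependent source and using the per-time estimate $\norm{\V[\rho](\cdot,s)-\V[\sigma](\cdot,s)}_{L^\infty}\leq \Lip\, W_1(\rho_s,\sigma_s)$, which is indeed what the proofs of Lemmas \ref{lem:grad-lip-2-glob-compact} and \ref{lem:grad-lip-2-glob} establish before taking the supremum in time), and then stitch as $T\to\infty$; both steps are legitimate, and your approach buys a cleaner argument (no restart iteration) plus an explicit, quantitative support bound ($R(t)$ linear in $t$ when the velocity is globally bounded, exponential with rate $4A_{g'}$ in the flat unbounded case), which the paper leaves implicit. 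One small caveat, shared with (and equally glossed over in) the paper: for an arbitrary candidate curve $\sigma\in\Cont([0,T);\P_\infty(\M))$ in the unbounded case with $\lambda=0$, the hypotheses of Theorem \ref{thm:Cauchy-Lip-glob} ask for $\sup_t\norm{\V[\sigma]_t}_{L^\infty(Q)}<\infty$, which requires a word about the first moments of $\sigma_t$ being bounded on the time interval (or restricting the fixed-point space to curves with uniformly bounded supports, which $\Gamma$ preserves by your Gronwall estimate); stating this explicitly would make your stitching argument airtight.
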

\begin{proof} 
  By Lemma \ref{lem:v-Lip-glob-compact} (if $\M$ is compact) or Lemma \ref{lem:v-Lip-glob} (if $\M$ is unbounded), since $\M$ is convex, the velocity field $\V[\sigma]$ (for any fixed continuous curve $\sigma$ in $\P_\infty(\M)$) satisfies the assumptions of the global Cauchy-Lipschitz theorem (cf. Theorem \ref{thm:Cauchy-Lip-glob}), so that we can infer that $(\V[\sigma],\supp(\rho_0))$ generates a unique global flow defined on $\supp(\rho_0)\times [0,\infty)$. 

Following the same steps as in the proof of Theorem \ref{thm:well-posedness}, using Lemmas \ref{lem:v-Lip-glob-compact} and \ref{lem:grad-lip-2-glob-compact} (in case $\M$ is compact) or Lemmas \ref{lem:v-Lip-glob} and \ref{lem:grad-lip-2-glob} (in case $\M$ is unbounded) in place of Lemmas \ref{lem:v-Lip} and \ref{lem:grad-lip-2}, respectively, yields the well-posedness by observing that the constants $\overbar L$ and $\Lambda$ are now independent of $\supp(\rho_0)$ and its diameter, which allows us to apply an iterative argument indefinitely and obtain a unique solution in $\Cont([0,\infty);\P_\infty(\M))$. If $\M$ is unbounded, the boundedness of the support of the solution and the existence of a nondecreasing ball containing it follows from the iterative construction of the global flow map in the proof of Theorem \ref{thm:Cauchy-Lip-glob}; if $\M$ is compact, this is trivial.
\end{proof}

\section{Additional results}
\label{sect:add-results}

We give here two additional results which can be derived by following very closely the global well-posedness and stability theory established for the model on the sphere in \cite{FePaPa2020}. For this reason, we do not give the proofs here and refer the reader to \cite[Proposition 4.1]{FePaPa2020} (global well-posedness for attractive potentials) and \cite[Theorem 3.8]{FePaPa2020} (stability of solutions) should they wish to see details.

Following \cite{FePaPa2020}, we give the following definition of purely attractive potential: given $x\in \M$, we say that $K$ is \emph{purely attractive} at $x$ if for all normal neighbourhood $U$ of $x$ and $y\in U$ we have
\begin{equation}
  \label{eqn:purely-att}
  \ap{\grad K_y(x),\log_x y}_x \leq 0.
\end{equation}
Consider now the hypothesis below on $K$:
\begin{namedthm}{(Katt)}
  \namedlabel{hyp:Katt}{{\bfseries \upshape (Katt)}}
  $K\:\M\times\M \to \R$ satisfies \ref{hyp:Kloc} with $g$ such that 
  \begin{equation*}
    g'(r^2) \geq0 \quad \text{for all $r<\iM$}.
  \end{equation*}
\end{namedthm}
\noindent By \eqref{eqn:K-gen}, the nonnegativity condition in \ref{hyp:Katt} means that $K$ is purely attractive.

\begin{thm}[Global Well-Posedness for Attractive Potential]
  \label{thm:well-posedness-glob-attr}
  Let $\M$ and $K$ satisfy \ref{hyp:M} and \ref{hyp:Katt}, and let $U\subset B_{r/2}(p)$ for some $p\in\M$ and $r<\iM$ be open and geodesically convex and such that \eqref{eqn:diam-U} holds. Let moreover $\rho_0 \in \P(U)$ and $R>0$ be such that $\supp(\rho_0) \subset B_R(p) \subset U$. Then, there exists a unique weak solution $\rho$ in $\Cont([0,\infty);\P(U))$ starting from $\rho_0$ to the aggregation equation \eqref{eqn:model}; furthermore, $\supp(\rho_t) \subset \overbar{B_R(p)}$ for all $t\geq0$. 
\end{thm}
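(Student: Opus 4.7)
My plan is to follow the strategy of \cite[Proposition 4.1]{FePaPa2020}: combine the local well-posedness result (Theorem \ref{thm:well-posedness}) with a support-confinement argument that keeps $\supp(\rho_t)$ inside a fixed compact subset of $U$, then iterate local existence to cover all of $[0,\infty)$. Concretely, Theorem \ref{thm:well-posedness} yields a unique local weak solution $\rho$ on some maximal interval $[0,T_{\max})$; the task is to show that $T_{\max}=\infty$ and that $\supp(\rho_t)\subset \overbar{B_R(p)}$ for every $t$.

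The heart of the proof is the claim that the function
\[
\phi(t) := \max_{x_0 \in \supp(\rho_0)} d\bigl(p,\Psi^t_{\V[\rho]}(x_0)\bigr)
\]
is nonincreasing on $[0,T_{\max})$, so that $\phi(t)\leq \phi(0)<R$ and $\supp(\rho_t)=\Psi^t_{\V[\rho]}(\supp(\rho_0))\subset \overbar{B_{\phi(0)}(p)}\subset B_R(p)$. Two ingredients drive this. First, under the standing hypotheses on $U$, the lower bound in \eqref{eqn:bHessian} with $z=p$ is strictly positive on $U$ (since $d(x,p)\leq \Diam(U)<\pi/(2\sqrt{\mu})$ when $\mu>0$ by \eqref{eqn:diam-U}, and simply $2\|v\|_x^2$ when $\mu=0$), so $\Hess d_p^2\succeq 0$ and $d_p^2$ is geodesically convex on $U$, yielding
\[
d_p^2(y)-d_p^2(x)\geq -2\,\ap{\log_x p,\log_x y}_x, \qquad \text{for all } x,y\in U.
\]
Second, along any trajectory $x(t):=\Psi^t_{\V[\rho]}(x_0)$, a direct computation using \eqref{eqn:grad-d2} and \eqref{eqn:gradK-gen} gives
\[
\frac{\der}{\der t}\, d_p^2(x(t))=-4\int_U g'\bigl(d(x(t),y)^2\bigr)\,\ap{\log_{x(t)} p,\log_{x(t)} y}_{x(t)}\,\der\rho_t(y).
\]
At any $x_0$ realizing the maximum in $\phi(t)$, one has $d(p,y)\leq d(p,x(t))$ for every $y\in\supp(\rho_t)$, so the convexity inequality above forces $\ap{\log_{x(t)} p,\log_{x(t)} y}_{x(t)}\geq 0$; combined with $g'\geq 0$ from \ref{hyp:Katt}, the integrand is nonpositive. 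A Danskin-type argument on the compact maximum then gives $D^+\phi^2(t)\leq 0$, hence the claim.

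Global existence now follows by iteration: once $\supp(\rho_t)$ is confined to the compact set $\overbar{B_{\phi(0)}(p)}\subset U$, the local existence time furnished by Theorem \ref{thm:well-posedness} depends only on $\Diam(U)$ through $\overbar L$ and $\Lambda$, not on the current initial datum, so restarting at times arbitrarily close to $T_{\max}$ and patching via local uniqueness covers $[0,\infty)$. The main obstacle I anticipate is the Danskin-type step: justifying that the right Dini derivative of the supremum $\phi^2$ is bounded above by the supremum of the pointwise right derivatives over the set of achievers $x_0$. This reduces to joint continuity of $(t,x_0)\mapsto \frac{\der}{\der t} d_p^2(x(t))$, which follows from the regularity of the flow map (Theorem \ref{thm:Cauchy-Lip}) together with the Lipschitz estimates of Lemma \ref{lem:v-Lip}.
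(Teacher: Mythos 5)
Your proposal is correct and is essentially the proof the paper intends: the paper gives no argument of its own, deferring to \cite[Proposition 4.1]{FePaPa2020}, and your scheme---confinement of the support via the attractivity of $K$ together with the geodesic convexity of $d_p^2$ on $U$ (the nonnegative lower bound in \eqref{eqn:bHessian} under \eqref{eqn:diam-U}, legitimate here since $p\in B_R(p)\subset U$), followed by iteration of Theorem \ref{thm:well-posedness} with constants depending only on $\Diam(U)$---is precisely the adaptation of that argument, with the Hessian comparison replacing the spherical trigonometry used on the sphere. One small point to make explicit in a write-up: the uniform restart time comes not only from $\overbar L$ and $\Lambda$ but also from the fact that the confined support stays at distance at least $R-\phi(0)>0$ from $\partial U$, which is what keeps the maximal flow time from Theorem \ref{thm:Cauchy-Lip} bounded below uniformly across restarts.
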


\begin{thm}[Stability]
  Let $\rho_0,\sigma_0\in\P_\infty(\M)$. Let furthermore $\rho$ and $\sigma$ be weak solutions to \eqref{eqn:model} defined on some time interval $[0,T)$ starting from $\rho_0$ and $\sigma_0$, respectively, as derived in Theorems \ref{thm:well-posedness}, \ref{thm:well-posedness-glob} or \ref{thm:well-posedness-glob-attr}. Then, 
  \begin{equation*}
    W_1(\rho_t,\sigma_t) \leq e^{(\overbar L + \Lambda)t}\, W_1(\rho_0,\sigma_0) \quad \text{for all $t\in[0,T)$},
  \end{equation*}
  where $\overbar L$ and $\Lambda$ are the constants given in the proofs of Lemmas \ref{lem:v-Lip} and \ref{lem:grad-lip-2} in the local, compact and purely attractive cases and of Lemmas \ref{lem:v-Lip-glob} and \ref{lem:grad-lip-2-glob} in the global and unbounded case. 
\end{thm}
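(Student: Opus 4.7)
The plan is to prove the stability estimate by a coupling argument, adapting the calculation from Lemma \ref{lem:dist-flow-maps} and then closing a Gronwall inequality via Lemma \ref{lem:grad-lip-2} (respectively \ref{lem:grad-lip-2-glob} in the unbounded case). Fix $t \in [0,T)$ and let $\pi_0 \in \Pi(\rho_0,\sigma_0)$ be an optimal transport plan for $W_1(\rho_0,\sigma_0)$. Push it forward through both flow maps by setting
\[
  \pi_t := \bigl(\Psi^t_{\V[\rho]},\Psi^t_{\V[\sigma]}\bigr)\#\pi_0,
\]
which is a (generally non-optimal) coupling of $\rho_t$ and $\sigma_t$ by Definition \ref{defn:solution}. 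Setting
\[
  \phi(t) := \int_{\M\times\M} d\bigl(\Psi^t_{\V[\rho]}(x),\Psi^t_{\V[\sigma]}(y)\bigr) \d\pi_0(x,y),
\]
the definition of $W_1$ as an infimum gives $W_1(\rho_t,\sigma_t) \leq \phi(t)$ and $\phi(0) = W_1(\rho_0,\sigma_0)$.

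Next I would control the growth of $\phi$. For $\pi_0$-a.e.\ $(x,y)$, write $p(t) = \Psi^t_{\V[\rho]}(x)$ and $q(t) = \Psi^t_{\V[\sigma]}(y)$, and repeat verbatim the expansion in the proof of Lemma \ref{lem:dist-flow-maps} with $X_t = \V[\rho](\cdot,t)$ and $Y_t = \V[\sigma](\cdot,t)$; the starting condition $p(0)=q(0)$ is never used in that derivation, so we obtain, whenever $p(t)\neq q(t)$,
\[
  \frac{\der}{\der t}\, d(p(t),q(t)) \leq \overbar{L}\, d(p(t),q(t)) + \norm{\V[\rho]-\V[\sigma]}_{L^\infty(U\times[0,T))},
\]
with $\overbar L$ the Lipschitz constant supplied by Lemma \ref{lem:v-Lip} (or by Lemma \ref{lem:v-Lip-glob-compact}/\ref{lem:v-Lip-glob} in the global cases). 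The diagonal set $\{p=q\}$ can be handled either by passing through the smooth proxy $d^2(p,q)+\varepsilon$ and sending $\varepsilon\to 0$, or simply by noting that the one-sided Dini derivative of $d(p(\cdot),q(\cdot))$ is controlled by $\|\V[\rho]-\V[\sigma]\|_{L^\infty}$ wherever $p(t)=q(t)$, so the differential inequality above holds in the almost-everywhere / absolutely continuous sense.

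Integrating this pointwise inequality against $\pi_0$ (the integrand is uniformly bounded by $\Diam(U)$ in the local case and stays in a nondecreasing ball by Theorem \ref{thm:well-posedness-glob}, so differentiation under the integral is legitimate) yields
\[
  \phi'(t) \leq \overbar{L}\,\phi(t) + \norm{\V[\rho](\cdot,t)-\V[\sigma](\cdot,t)}_{L^\infty}.
\]
Now Lemma \ref{lem:grad-lip-2} (respectively Lemma \ref{lem:grad-lip-2-glob-compact} or Lemma \ref{lem:grad-lip-2-glob}, depending on the case at hand) bounds this sup norm by $\Lambda\, W_1(\rho_t,\sigma_t) \leq \Lambda\,\phi(t)$, so
\[
  \phi'(t) \leq (\overbar L + \Lambda)\,\phi(t).
\]
Gronwall's lemma then gives $\phi(t)\leq e^{(\overbar L+\Lambda)t}\phi(0)$, and combining with $W_1(\rho_t,\sigma_t)\leq \phi(t)$ finishes the proof.

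The only genuinely delicate point is the interplay between the different hypotheses in the three well-posedness statements (Theorems \ref{thm:well-posedness}, \ref{thm:well-posedness-glob}, \ref{thm:well-posedness-glob-attr}): in each case one must verify that the two auxiliary Lemmas supplying $\overbar L$ and $\Lambda$ apply on a common domain containing the supports of $\rho_t$ and $\sigma_t$ for all $t\in [0,T)$. In the local case this is the fixed convex set $U$; in the attractive case this is the closed ball $\overbar{B_R(p)}$ given by Theorem \ref{thm:well-posedness-glob-attr} (so $\Lambda$ and $\overbar L$ may be taken relative to any convex open neighbourhood of that ball satisfying \eqref{eqn:diam-U}); in the nonpositively curved unbounded case one takes the union of a nondecreasing family of balls containing both supports, which is admissible because the constants in Lemmas \ref{lem:v-Lip-glob} and \ref{lem:grad-lip-2-glob} are independent of the underlying domain.
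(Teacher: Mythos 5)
Your proposal is correct and is essentially the argument this paper relies on (the proof is deferred to \cite[Theorem 3.8]{FePaPa2020}): a Dobrushin-type coupling obtained by pushing an optimal plan forward through the two flows, the differential inequality of Lemma \ref{lem:dist-flow-maps} along pairs of characteristics (whose derivation indeed never uses equal starting points), the bound $\norm{\V[\rho](\cdot,t)-\V[\sigma](\cdot,t)}_{L^\infty}\leq \Lambda\, W_1(\rho_t,\sigma_t)$ from Lemma \ref{lem:grad-lip-2} (or its global variants), and Gronwall, with the case-by-case check of a common admissible domain handled as you describe. The only refinement I would suggest is to justify the exchange of time derivative and integral via the integral (Tonelli) form of the pointwise inequality, using the uniform-in-time Lipschitz bound on $t\mapsto d(p(t),q(t))$ coming from the boundedness of the velocity fields, rather than boundedness of the integrand alone; this is cosmetic and does not affect correctness.
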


We note that when $\M$ and $K$ verify \ref{hyp:M}$_\lambda$ and \ref{hyp:Kglob}$_\lambda$ for some $\lambda\leq0$, and $\M$ is unbounded (i.e., we are in the global and unbounded case and the constant $\overbar L$ is given in the proof of Lemma \ref{lem:v-Lip-glob}), the above theorem illustrates the fact that the larger (in magnitude) the lower bound $\lambda$ on $\K$ is, the faster solutions may spread apart in time.


\appendix
\section{Appendix: basic concepts and terminology from differential geometry}
\label{app:terminology}

All the concepts discussed here are standard and can be found in any graduate differential geometry book. We refer the reader for example to \cite{doCarmo1992,Lee2018,Chavel2006}.

\smallskip
\noindent \textbf{Logarithm map and normal neighbourhoods.}
Given $x\in\M$, there exists $G\subset T_x\M$ open with $0\in G$ such that the exponential map $\exp_x \: G \to U:=\exp_x(G)$ restricted to $G$ is a diffeomorphism. The inverse of $\exp_x$ is the \emph{logarithm map} at $x$ (on $U$), denoted by $\log_x\: U \to G$. We have that $x\in U$ and $U$ is an open subset of $\M$. We call any such $U$ a \emph{normal neighbourhood} of $x$---conversely, any $x\in U$ has a normal neighbourhood. A subset of $\M$ is said to be a \emph{totally normal neighbourhood} if it is a normal neighbourhood of each of its points.

If $r>0$ is such that $\exp_x$ defines a diffeomorphism on the open ball $B_r(0) \subset T_x\M$, then $B_r(x) := \exp_x(B_r(0))$ is open and is called the \emph{open geodesic ball centred at $x$ of radius $r$}; by the previous paragraph, $B_r(x)$ is by definition a normal neighbourhood of $x$. Also, it coincides with the metric ball centred at $x$ with radius $r$, that is, with the set $\{y\in\M\st d(x,y)<r\}$. The largest such radius $r$ is called the \emph{injectivity radius at $x$} and is denoted $r_\mathrm{inj}(x)$. The infimum over all $y \in \M$ of $r_\mathrm{inj}(y)$ is referred to as the \emph{injectivity radius of $\M$}, which we denote by $\iM$. 

\smallskip
\noindent \textbf{Geodesics and convexity.}
By definition of the exponential map, any point $y$ in a normal neighbourhood of another point $x$ can be connected to $x$ via a unique minimizing geodesic $\gamma_{xy}\:[0,1] \to \M$, which satisfies, for all $t\in [0,1]$,
\bes
	\gamma_{xy}(t) = \exp_x(t\log_x y).
\ees
This formula indicates that $\exp_x$ transforms the straight line $t\mapsto t\log_x(y)$ in $T_x\M$ into the geodesic connecting $x$ and $y$. 

A \emph{(geodesically) convex} subset of $\M$ is a subset in which each pair of points can be connected by a unique minimizing geodesic which lies entirely within it---when open, it is thus a particular case of a totally normal neighbourhood that minimizing geodesics cannot exit. We say that $\M$ is \emph{(geodesically) convex} if it is a geodesically convex subset of itself. Note that a geodesic ball is not necessarily convex. Nevertheless, given any $x\in\M$, it is a fact that $x$ has a convex geodesic ball as a neighbourhood; we refer to the radius of the largest such ball as the \emph{convexity radius at $x$}, denoted $r_{\mt{conv}}(x)$. The infimum over all $y \in \M$ of $r_\mathrm{conv}(y)$ is referred to as the \emph{convexity radius of $\M$}, which we denote by $\cM$.

\smallskip
\noindent \textbf{Normal charts.} Let $x\in \M$ and $U$ be a normal neighbourhood of $x$. We can construct a chart in a canonical way. Indeed, let $\{E_1(x),\dots,E_\dim(x)\}$ be an orthonormal basis of $T_x\M$ and define $E\:T_x\M \to \R^\dim$ by
\be
\label{eqn:ortho-basis}
	E(v) = (v_1,\dots,v_n), \qquad \mbox{for all $v= v_1E_1(x) + \cdots + v_nE_\dim(x) \in T_x\M$},
\ee
i.e., $E(v)$ gives the vector in $\R^\dim$ whose coordinates are the coordinates of $v$ in $\{E_1(x),\dots,E_\dim(x)\}$. Then, define the map $\varphi\: U \to \R^\dim$ by
\be\label{eqn:normal-chart}
	\varphi(y) = E\circ \log_x(y) \quad \mbox{for all $y\in U$};
\ee
note that $\varphi(x) = 0$. The pair $(U,\varphi)$ induces a local chart of $\M$ containing $x$. Any chart thus constructed is referred to as a \emph{normal chart} (generated by $x$).

\smallskip
\noindent \textbf{Push-forward and coordinates.}
Given a differentiable function $f\: \M \to \R^\dim$, a point $x\in\M$ and $v\in T_x\M$, we call $f_*v := \der f(x)(v) \in \R^\dim$ the \emph{push-forward} of $v$ through $f$. Given any chart $(V,\varphi)$ of $\M$ containing some $x,y \in \M$, we write
\bes
	\left \{ \frac{\p}{\p\varphi^1}(y),\dots,\frac{\p}{\p\varphi^\dim}(y)\right \}
\ees
for the basis of $T_y\M$ defined by 
\bes
	\frac{\p}{\p\varphi^i}(y) = \der \varphi^{-1}(\varphi(y))(e_i), \quad \mbox{that is,} \quad e_i = \der\varphi(y)\left(\frac{\p}{\p\varphi^i}(y)\right) = \varphi_* \frac{\p}{\p\varphi^i}(y),
\ees
where, for all $i\in\{1,\dots,n\}$, $e_i$ is the $i$th vector of the canonical basis of $\R^\dim$. For all $v\in T_y\M$ there exist $v_1,\dots,v_n\in\R$ such that 
\bes
	v = v_1 \frac{\p}{\p\varphi^1}(y) + \cdots + v_n \frac{\p}{\p\varphi^\dim}(y).
\ees
Then, by linearity, 
\bes
	\varphi_*v = \der\varphi(y)(v) = v_1 \der \varphi(y) \left( \frac{\p}{\p\varphi^1}(y) \right) + \cdots + v_n \der \varphi(y) \left( \frac{\p}{\p\varphi^\dim}(y) \right) = v_1 e_1 + \cdots + v_n e_n = (v_1,\dots,v_n).
\ees
Hence $\varphi_*v$ gives the vector in $\R^\dim$ whose coordinates are those of $v$ in $\left( \p/\p\varphi^1(y),\dots,\p/\p\varphi^\dim(y) \right)$.

\bigskip
\noindent \textbf{Euclidean norm and instrinsic distance.} 
We give now useful relations between norms in $\R^\dim$ and intrinsic distances on $\M$. Let $(V,\varphi)$ be a chart of $\M$ and let $Q\subset V$ be compact. Then there exist $c_{\varphi,Q},C_{\varphi,Q}>0$ so that
\be
\label{eqn:lemA-1}
c_{\varphi,Q} \norm{\varphi_*v}_{\R^\dim} \leq \norm{v}_x \leq C_{\varphi,Q} \norm{\varphi_*v}_{\R^\dim}, \qquad \mbox{for all $x\in Q$ and $v\in T_x\M$}.
\ee
If furthermore $\varphi(U)\subset\R^\dim$ is convex, then there exists $L_{\varphi,Q}>0$ so that
\be
\label{eqn:lemA-3}
d(x,y) \leq L_{\varphi,Q} \norm{\varphi(x)-\varphi(y)}_{\R^\dim}, \qquad \mbox{for all $x,y\in Q$}.
\ee

We now prove these relations. Let us first show \eqref{eqn:lemA-1}. Define
\bes
T_\varphi = \{(x,v) \in Q\times T\M \st v\in T_x\M,\, \norm{\varphi_*v}_{\R^\dim} = 1\},
\ees
which is compact as well. Then, because $(x,v) \mapsto \norm{v}_x$ is continuous on $T_\varphi$ and we have $\norm{v}_x > 0$ for all $(x,v)\in T_\varphi$, there exist $c_{\varphi,Q},C_{\varphi,Q}>0$ such that
\be
c_{\varphi,Q} \leq \norm{v}_x \leq C_{\varphi,Q}, \qquad \mbox{for all $(x,v)\in T_\varphi$}.
\ee
Fix now $(x,v)\in Q\times T\M$ with $v\in T_x\M$ and $\norm{\varphi_*v}_{\R^\dim}\neq0$. Using the inequality above we get
\be
\label{eqn:ip-proof-2}
\norm{v}_x = \norm{\varphi_*v}_{\R^\dim} \norm{\frac{v}{\norm{\varphi_*v}_{\R^\dim}}}_x \leq C_{\varphi,Q}\norm{\varphi_*v}_{\R^\dim},
\ee
and similarly for the side with $c_{\varphi,Q}$. Also, trivially, if $\norm{\varphi_*v}_{\R^\dim}=0$, then the same inequalities hold, proving \eqref{eqn:lemA-1}.

Let us now turn to \eqref{eqn:lemA-3}. Define
\bes
R_{\varphi,Q} = \{ \xi \in \R^\dim \st \exists\, (x,y,t)\in Q \times Q \times [0,1],\, \xi = (1-t)\varphi(x) + t\varphi(y) \},
\ees
which we note is compact. Let $x,y \in Q$ and define $\gamma$ by
\bes
\gamma(t) = \varphi^{-1}((1-t)\varphi(x) + t\varphi(y)), \qquad \mbox{for all $t\in[0,1]$},
\ees
so that in particular, the composition $\varphi\circ \gamma$ is the (Euclidean) geodesic from $\varphi(x)$ to $\varphi(y)$. Since $\varphi(V)$ is convex we deduce that $R_{\varphi,Q} \subset \varphi(V)$ and so $\gamma([0,1]) \subset \varphi^{-1}(R_{\varphi,Q}) \subset U$. Since $\varphi^{-1}(R_{\varphi,Q})$ is furthermore compact, using \eqref{eqn:ip-proof-2} and setting $L_{\varphi,Q} = C_{\varphi,\varphi^{-1}(R_{\varphi,Q})}$ yields
\begin{align*}
  d(x,y) &\leq \int_0^1 \norm{\gamma'(t)}_{\gamma(t)} \d t \leq L_{\varphi,Q} \int_0^1 \norm{\varphi_*\gamma'(t)}_{\R^\dim} \d t\\
         &= L_{\varphi,Q} \int_0^1 \norm{(\varphi\circ\gamma)'(t)}_{\R^\dim} \d t = L_{\varphi,Q}\norm{\varphi(x) - \varphi(y)}_{\R^\dim}. \qedhere
\end{align*}

\section{Appendix: proofs of preliminary results}
\label{app:prel}

\subsection{Proof of Lemma \ref{lem:pf-Lip}}
\label{appendix:pf-Lip}

Let us first show the following lemma, which is also going to be useful in Appendix \ref{appendix:bHessian-local}.
\begin{lem}
  \label{lem:Lip-connection}
  Let $U\subset \M$ be open and geodesically convex, and let further $X$ be a differentiable vector field on $U$. Then,
  \begin{equation*}
    \norm{X(x) - \Pi_{yx}X(y)}_x^2 \leq \int_0^1 \norm{\conn_{\gamma'(t)} X(\gamma(t))}_{\gamma(t)}^2 \d t, \qquad \text{for all $x,y\in U$},
  \end{equation*}
  where $\gamma\:[0,1] \to \M$ is the minimizing geodesic joining $x$ to $y$. (Refer to \eqref{eqn:cov-der} for the definition of the above integrand.)
\end{lem}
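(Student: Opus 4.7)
The plan is to reduce the problem to the Euclidean fundamental theorem of calculus on the fixed tangent space $T_x\M$, using the fact that parallel transport is an isometry. Since $U$ is geodesically convex, there is a unique minimizing geodesic $\gamma:[0,1]\to\M$ from $x$ to $y$, lying entirely in $U$, and the curve $Y(t) := \Pi_{\gamma(t)x} X(\gamma(t)) \in T_x\M$ is well-defined and differentiable. Observe that $Y(0) = X(x)$ and $Y(1) = \Pi_{yx}X(y)$, so the quantity we want to bound is $\|Y(1) - Y(0)\|_x$.

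The first key step is to compute $Y'(t)$. Using the definition \eqref{eqn:cov-der} of the covariant derivative along $\gamma$ and the fact that $\Pi_{\gamma(s)x} = \Pi_{\gamma(t)x} \circ \Pi_{\gamma(s)\gamma(t)}$ for $s$ near $t$ (by uniqueness of parallel transport along $\gamma$), one gets
\[
Y'(t) = \Pi_{\gamma(t)x}\, \conn_{\gamma'(t)} X(\gamma(t)),
\]
and since parallel transport is a linear isometry,
\[
\|Y'(t)\|_x = \|\conn_{\gamma'(t)} X(\gamma(t))\|_{\gamma(t)}.
\]

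With $Y$ identified as a smooth curve in the fixed inner product space $T_x\M$, the ordinary fundamental theorem of calculus together with the triangle inequality for integrals yields
\[
\|X(x) - \Pi_{yx}X(y)\|_x = \|Y(1) - Y(0)\|_x = \left\|\int_0^1 Y'(t)\,\d t\right\|_x \leq \int_0^1 \|\conn_{\gamma'(t)} X(\gamma(t))\|_{\gamma(t)}\,\d t.
\]
Finally, squaring and applying the Cauchy--Schwarz inequality (equivalently, Jensen's inequality) on $[0,1]$ with the constant function $1$ gives
\[
\left(\int_0^1 \|\conn_{\gamma'(t)} X(\gamma(t))\|_{\gamma(t)}\,\d t\right)^2 \leq \int_0^1 \|\conn_{\gamma'(t)} X(\gamma(t))\|_{\gamma(t)}^2\,\d t,
\]
which is the desired inequality.

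The only nontrivial step is the derivative formula $Y'(t) = \Pi_{\gamma(t)x}\conn_{\gamma'(t)}X(\gamma(t))$; everything else is straightforward once we work in the fixed space $T_x\M$. This identity is standard but relies on the group property of parallel transport along a single curve (namely $\Pi_{\gamma,t+h} = \Pi_{\gamma(t)\gamma(t+h)}\circ \Pi_{\gamma,t}$), which reduces the computation at time $t$ to the definition of the covariant derivative at $\gamma(t)$ and then one final parallel transport back to $x$.
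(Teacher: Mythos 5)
Your proof is correct and is essentially the paper's argument in coordinate-free form: the paper expands $X$ along $\gamma$ in a parallel orthonormal frame and differentiates the components via metric compatibility, which is exactly the content of your identity $\frac{\der}{\der t}\,\Pi_{\gamma(t)x}X(\gamma(t)) = \Pi_{\gamma(t)x}\conn_{\gamma'(t)}X(\gamma(t))$, followed in both cases by integration and Jensen/Cauchy--Schwarz. The only cosmetic difference is that your route passes through the sharper intermediate $L^1$ bound $\norm{X(x)-\Pi_{yx}X(y)}_x \leq \int_0^1 \norm{\conn_{\gamma'(t)}X(\gamma(t))}_{\gamma(t)}\d t$ before squaring, while the paper applies Jensen componentwise.
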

\begin{proof}
  Take two points $x,y \in U$ and the unique minimizing geodesic $\gamma\:[0,1] \to \M$ joining the two points; in particular, $\gamma([0,1])\subset U$ for all $t \in [0,1]$. Consider an orthonormal parallel frame $t\mapsto \{ e_1(t),e_2(t), \dots, e_\dim(t)\}$ along $\gamma$. From
\[
  X(\gamma(t)) = \sum_{i=1}^\dim \langle X(\gamma(t)), e_i(t) \rangle_{\gamma(t)} e_i(t), \qquad \text{for all $t\in[0,1]$},
\]
using that the parallel transport is a linear operator, we get:
\begin{align*}
  \Pi_{yx} X (\gamma(1)) &= \sum_{i=1}^\dim \langle X(\gamma(1)), e_i(1) \rangle_{\gamma(1)} \Pi_{yx} e_i(1) = \sum_{i=1}^\dim \langle X(\gamma(1)), e_i(1) \rangle_{\gamma(1)} e_i(0).
\end{align*}
Consequently, 
\begin{equation}
\label{eqn:diff-coord}
\| X(x) - \Pi_{yx} X(y) \|_x^2 = \sum_{i=1}^\dim  \left( \langle X(\gamma(1)), e_i(1) \rangle_{\gamma(1)} -  \langle X(\gamma(0)), e_i(0) \rangle_{\gamma(0)} \right)^2.
\end{equation}
Using the compatibility of the connection with the metric \cite{doCarmo1992,DuJiMoSaWa2020} we obtain
\begin{align*}
\frac{\der}{\der t} \langle X(\gamma(t)), e_i(t) \rangle_{\gamma(t)} & = \langle \conn_{\gamma'(t)} X(\gamma(t)), e_i(t) \rangle_{\gamma(t)} + \langle  X(\gamma(t)), \conn_{\gamma'(t)} e_i(t) \rangle_{\gamma(t)}\\[2pt]
  &=  \langle \conn_{\gamma'(t)} X(\gamma(t)), e_i(t) \rangle_{\gamma(t)},
\end{align*}
where for the second equality we used  $\conn_{\gamma'(t)} e_i(t) = 0$, for all $t\in[0,1]$. By integrating the equation above from $0$ to $1$ and using it in \eqref{eqn:diff-coord} we find
\begin{align*}
\| X(x) - \Pi_{yx} X(y) \|_x^2 &= \sum_{i=1}^\dim \left( \int_0^1  \langle \conn_{\gamma'(t)} X(\gamma(t)), e_i(t) \rangle_{\gamma(t)} \d t  \right)^2 \\[2pt]
& \leq \int_0^1  \sum_{i=1}^\dim \langle \conn_{\gamma'(t)} X(\gamma(t)), e_i(t) \rangle_{\gamma(t)}^2 \d t\\[2pt]
& = \int_0^1 \| \conn_{\gamma'(t)} X(\gamma(t)) \|_{\gamma(t)}^2 \d t,
\end{align*}
where for the second line we used Jensen's inequality, and for the third we used the fact that $\{e_i(t)\}_{i=1}^\dim$ is an orthonormal basis of $T_{\gamma(t)}\M$.
\end{proof}

  Let us now turn to the proof of Lemma \ref{lem:pf-Lip}. Take $Q\subset B_\delta(x) \cap V$ compact and let $y,z\in Q$. We write $G_\varphi(y)$ for the metric tensor associated with $\ap{\cdot,\cdot}_y$ in the basis $\{\p/\p\varphi^1(y),\dots,\p/\p\varphi^\dim(y)\}$. For all $i,j\in\{1,\dots,\dim\}$ we denote by $p_i(y,z)$ and $f_{ij}(y,z)$ the $i$th coordinates in this basis of $X(y) - \Pi_{zy}X(z)$ and $\p/\p\varphi^j(y) - \Pi_{zy}\p/\p\varphi^j(z)$, respectively. 
  
  For all $i\in\{1,\dots,\dim\}$ we have
  \begin{align*}
    |p_i(y,z)| &= \left| \ap{X(y) - \Pi_{zy}X(z), G_\varphi(y)^{-1} \frac{\p}{\p\varphi^i}(y)}_y \right|\\[3pt]
               &\leq \norm{X(y) - \Pi_{zy}X(z)}_y \norm{G_\varphi(y)^{-1} \frac{\p}{\p\varphi^i}(y)}_y \leq P_\varphi \norm{X}_{\mathrm{Lip}(Q)} d(y,z),
  \end{align*}
  where $P_\varphi = \sup_{i\in\{1,\dots,\dim\}} \norm{G_\varphi^{-1}(\cdot) \p/\p\varphi^i(\cdot)}_{L^\infty(Q)}$, which is finite since $\p/\p\varphi^i$ for all $i\in\{1,\dots,\dim\}$ and $G_\varphi$ are continuous over the compact set $Q$; note that $P_\varphi$ is independent of $y$ and $z$. 
  
 Similarly, for all $i,j\in\{1,\dots,\dim\}$, we have
  \bes
  |f_{ij}(y,z)| \leq P_\varphi F_\varphi d(y,z),
  \ees
  where $F_\varphi = \sup_{i\in\{1,\dots,\dim\}} \norm{\p/\p\varphi^i}_{\mathrm{Lip}(Q)}$, which is finite since $\p/\p\varphi^i$ as a vector field on $B_\delta(x)$ is Lipschitz continuous for all $i\in\{1,\dots,\dim\}$. Indeed, by Lemma \ref{lem:Lip-connection}, for all $i\in\{1,\dots,\dim\}$ there holds
  \begin{equation*}
    \norm{\frac{\p}{\p\varphi^i}(y) - \Pi_{zy}\frac{\p}{\p\varphi^i}(z)}_y^2 \leq \int_0^1 \norm{\conn_{\gamma'(t)} \frac{\p}{\p\varphi^i}(\gamma(t))}_{\gamma(t)}^2 \d t, \qquad \text{for all $y,z\in B_\delta(x)$},
  \end{equation*}
  where $\gamma\:[0,1]\to \M$ is the unique minimizing geodesic linking $y$ to $z$. Decomposing $\conn_{\gamma'(t)} \frac{\p}{\p\varphi^i}(\gamma(t))$ using Christoffel symbols and using that the terms involved are continuous, hence locally bounded, one can bound the integrand in the right-hand side by $L \norm{\gamma'(t)}_{\gamma(t)}$, with $L>0$ constant. Then,  by rescaling $\gamma$ to be constant-speed (i.e., so that $\norm{\gamma'(t)}_{\gamma(t)} = d(y,z)$ for all $t\in[0,1]$), we get the desired Lipschitz continuity:
  \begin{equation*}
    \norm{\frac{\p}{\p\varphi^i}(y) - \Pi_{zy}\frac{\p}{\p\varphi^i}(z)}_y^2 \leq L \int_0^1 \norm{\gamma'(t)}_{\gamma(t)}^2 \d t = L\,d(y,z)^2, \qquad \text{for all $y,z\in B_\delta(x)$}.
  \end{equation*}

Now, write $(x_1(y),\dots,x_n(y)) = \varphi_*X(y) \in \R^n$, so that 
  \bes
  X(y) = x_1(y) \frac{\p}{\p\varphi^1}(y) + \cdots + x_n(y) \frac{\p}{\p\varphi^\dim}(y),
  \ees
  and analogously for $z$ (cf. Appendix \ref{app:terminology}). By linearity of the parallel transport from $z$ to $y$, we have
  \begin{align*}
    \sum_{i=1}^\dim p_i(y,z) \frac{\p}{\p\varphi^i}(y) &= X(y) - \Pi_{zy}X(z) = \sum_{i=1}^\dim x_i(y) \frac{\p}{\p\varphi^i}(y) - \sum_{i=1}^n x_i(z) \Pi_{zy}\frac{\p}{\p\varphi^i}(z)\\
                                                       &= \sum_{i=1}^\dim (x_i(y) - x_i(z)) \frac{\p}{\p\varphi^i}(y) + \sum_{i=1}^\dim x_i(z) \left( \frac{\p}{\p\varphi^i}(y) - \Pi_{zy}\frac{\p}{\p\varphi^i}(z) \right)\\
                                                       &= \sum_{i=1}^\dim (x_i(y) - x_i(z)) \frac{\p}{\p\varphi^i}(y) + \sum_{i=1}^\dim \sum_{j=1}^\dim x_j(z) f_{ij}(y,z) \frac{\p}{\p\varphi^i}(y),
  \end{align*}
  which yields, for all $i\in\{1,\dots,\dim\}$,
  \bes
  p_i(y,z) = x_i(y) - x_i(z) + \sum_{j=1}^\dim x_j(z) f_{ij}(y,z).
  \ees
  Thus, for all $i\in\{1,\dots,\dim\}$,
  \bes
  |x_i(y) - x_i(z)| \leq |p_i(y,z)| + \sum_{j=1}^\dim |x_j(z)| |f_{ij}(y,z)| \leq P_\varphi ( \norm{X}_{\mathrm{Lip}(Q)} + \dim P_\varphi F_\varphi \norm{X}_{L^\infty(Q)}) d(y,z). 
  \ees
  To conclude the proof we use \eqref{eqn:lemA-3}. Indeed, by definition of normal chart, we have $\varphi(B_\delta(x))=\varphi(\exp_x(B_\delta(0)) = E(\log_x(\exp_x(B_\delta(0)))) = E(B_\delta(0)) \cong B_\delta(0)$, which is a convex subset of $\R^\dim$, where $E$ is given in \eqref{eqn:ortho-basis}.

\subsection{Proof of Lemma \ref{lemma:bHessian-local}}
\label{appendix:bHessian-local}

Assume first that $\grad f$ is locally Lipschitz continuous. Let $Q\subset U$ be compact, take any $x\in Q$ and $v \in T_x \M$ and write $\gamma$ for the constant-speed geodesic with $\gamma(0) = x$ and $\gamma'(0) = v$. In particular, $d(\gamma(t),x) = t \|v\|_x$ for all $t\geq0$ such that $\gamma(t)$ is defined, and for $\tau>0$ small enough we have $\gamma(t)\in \widetilde Q$ for all $t<\tau$ for some $\widetilde Q$ compact satisfying $Q\subset \widetilde Q \subset U$. By local Lipschitz continuity of $\grad f$ we thus have, for all $t<\tau$,
\begin{equation}
\label{eqn:Ls-pt}
\| \Pi^{-1}_{x \gamma(t)} \grad f(\gamma(t)) - \grad f(x) \|_x \leq L_{\widetilde Q}\, d(\gamma(t),x) = L_{\widetilde Q}\, t \|v\|_x.
\end{equation}
By \eqref{eqn:cov-der} and the definition of the Hessian, we have
\[
\Hess_vf(x) = \conn_v (\grad f)(x) = \lim_{t \to 0}  \frac{\Pi^{-1}_{x \gamma(t)} \grad f(\gamma(t)) - \grad f(x)}{t},
\]
and using \eqref{eqn:Ls-pt} we find
\[
\| \Hess_vf(x) \|_x \leq L_{\widetilde Q}\, \|v\|_x.
\]
The Hessian operator $\Hess f(x)$ is therefore bounded. By arbitrariness of $Q$ and $x\in Q$, we conclude that $\Hess f$ is locally bounded.

To prove the converse, suppose that $U$ is convex and that the Hessian of $f$ is locally bounded. Let $Q\subset U$ be compact; because $U$ is open and convex, there exists a compact, geodesically convex set $\widetilde Q$ with $Q\subset \widetilde Q \subset U$. Furthermore, take two points $x,y \in Q$ and the constant-speed geodesic $\gamma\:[0,1] \to \M$ joining the two points; in particular, $\| \gamma'(t)\|_{\gamma(t)} = d(x,y)$ and $\gamma([0,1])\subset \widetilde Q$ for all $t \in [0,1]$. By Lemma \ref{lem:Lip-connection} applied to the vector field $\grad f$ we get
\begin{equation*}
  \| \grad f (x) - \Pi_{yx} \grad f (y) \|_x^2 \leq \int_0^1 \| \Hess_{\gamma'(t)}f(\gamma(t)) \|_{\gamma(t)}^2 \d t.
\end{equation*}
By the local boundedness of $\Hess f$ and the fact that $\gamma$ is constant-speed we further get
\begin{align*}
  {\| \grad f (x) - \Pi_{yx} \grad f (y) \|}_x^2 &\leq C_{\widetilde Q}^2\, d(x,y)^2,
\end{align*}
for some $C_{\widetilde Q}>0$, which, by arbitrariness of $Q$ and $x,y\in Q$, ends the proof.

\subsection{Proof of Theorem \ref{thm:Cauchy-Lip}}
\label{appendix:Cauchy-Lip}



Take $x\in U$ and let $\delta\leq r_{\mt{conv}}(x)$ be such that $B_\delta(x)\subset U$. Let $(B_\delta(x),\varphi)$ be a normal chart generated by $x$, and consider the initial-value problem
	\be\label{eqn:characteristics-Rn}
		\begin{cases} \alpha'(t) = \Xi(\alpha(t),t),\\ \alpha(0) = \varphi(x),
	\end{cases}
	\ee
	where we define $\Xi\: \varphi(B_\delta(x)) \times [0,T) \to \R^\dim$ by
	\bes
		\Xi(\xi,t) = \varphi_*X_t (\varphi^{-1}(\xi)), \qquad \mbox{for all $(\xi,t) \in \varphi(B_\delta(x)) \times [0,T)$}.
	\ees
	Take $R\subset \varphi(B_\delta(x)) = B_\delta(0)$ and $S\subset [0,T)$ compact, so that in particular $Q:=\varphi^{-1}(R)\subset B_\delta(x)$ is compact. For all $\xi,\eta\in R$ and $t\in S$, our Lipschitz-continuity assumption on $X_t$ yields
	\begin{align*}
		\norm{\Xi(\xi,t) - \Xi(\eta,t)}_{\R^\dim} &= \norm{\varphi_*X_t (\varphi^{-1}(\xi)) - \varphi_*X_t (\varphi^{-1}(\eta))}_{\R^\dim}\\
		&\leq (A \norm{X_t}_{L^\infty(Q)} + B \norm{X_t}_{\mt{Lip}(Q)}) \norm{\xi-\eta}_{\R^\dim},
	\end{align*}
	for some constants $A,B>0$ coming from Lemma \ref{lem:pf-Lip}. Also, for all $\xi\in R$ it holds that
	\bes
		\norm{\Xi(\xi,t)}_{\R^\dim} = \norm{\varphi_*X_t (\varphi^{-1}(\xi))}_{\R^\dim} \leq C \norm{X_t}_{L^\infty(Q)},
	\ees
        where $C>0$ comes from \eqref{eqn:lemA-1}. Therefore, by \eqref{eqn:bdd-Lip} we get that $\Xi$ satisfies
	\bes
		\int_S \left( \norm{\Xi(\cdot,t)}_{L^\infty(R)} + \norm{\Xi(\cdot,t)}_{\mathrm{Lip}(R)} \right) \d t \leq \int_S \left( (A+C) \norm{X_t}_{L^\infty(Q)} + B \norm{X_t}_{\mathrm{Lip}(Q)} \right) \d t < \infty.
	\ees
	
	By arbitrariness of the compact sets $R\subset \varphi(B_\delta(x))$ and $S\subset [0,T)$ and by the classical Cauchy--Lipschitz theorem on $\R^\dim$ (see for instance \cite[Lemma 8.1.4]{AGS2005}), this yields the existence of a unique maximal solution $\alpha_x$ to \eqref{eqn:characteristics-Rn} defined on some time interval $[0,\tau_x)$, with $\tau_x\leq T$, and with values in $\varphi(B_\delta(x))$. By defining $\varphi_x = \varphi^{-1} \circ \alpha_x$, we see that $\alpha_x$ satisfies \eqref{eqn:characteristics-Rn} if and only if
	\be\label{eqn:characteristics-Rn-chart}
		\begin{cases} \varphi_*\varphi_x'(t) = (\varphi \circ \varphi_x)'(t) = \varphi_*X_t(\varphi_x(t)) & \mbox{for all $t\in [0,\tau_x)$},\\ \varphi(\varphi_x(0)) = \varphi(x).
	\end{cases}
	\ee
	By the bijectivity of $\varphi$, we get that $\varphi_x$ is thus the unique maximal solution to the characteristic equation starting at $x$. 
	
	Let now $\Sigma$ be a compact subset of $U$. We are left with showing that $\tau:=\inf_{x\in\Sigma}(\tau_x)>0$. By classical Euclidean Lipschitz theory, we deduce that for all $x\in U$ there exists $\delta_x>0$ such that $\bar\tau_x:=\inf_{y\in B_{\delta_x}(x)} \tau_y > 0$. Since $\Sigma$ is compact we know it can be covered by a finite subfamily of $\{B_{\delta_x}(x)\}_{x\in\Sigma}$, which we index by $\{x_1,\dots,x_n\}$ for some $n\in\mathbb{N}$. We thus get $\tau = \min_{i\in\{1,\dots,n\}} \bar\tau_{x_i} > 0$. The map $\Psi$ defined by $\Psi(x,t) = \varphi_x(t)$ for all $x\in \Sigma$ and $t\in [0,\tau)$ is then the unique maximal flow map generated by $(X,\Sigma)$.

\subsection{Proof of Theorem \ref{thm:Cauchy-Lip-glob}}
\label{appendix:Cauchy-Lip-glob}

We distinguish two cases: $\M$ is compact, and $\M$ is unbounded. 

The first case is direct by the Escape Lemma (cf. \cite[Exercise 4.10]{Lee2018} for instance), which states that local solutions cannot be contained in any compact subset of $\M$. The local Cauchy--Lipschitz theorem (cf. Theorem \ref{thm:Cauchy-Lip}) together with a contradiction argument suffices then to end this case.

Suppose now that $\M$ is unbounded. First note that the bound assumption on the Lipschitz constants of our vector fields implies that there exists $C>0$ such that for any $U\subset\M$ we have
  \begin{equation}
    \label{eqn:linear-bound}
    \sup_{t\in[0,\infty)} \norm{X_t}_{L^\infty(U)} \leq C(1+\Diam(U)).
  \end{equation}
Let $\Sigma\subset\M$ be compact and $U$ be bounded and open and such that $\Sigma\subset U$. Since $\M$ is convex we know $U$ is a totally normal neighborhood. Denote by $\widetilde X$ the restriction of $X$ to $U\times [0,\infty)$. By the local Cauchy--Lipschitz theorem (cf. Theorem \ref{thm:Cauchy-Lip}), there exists a unique maximal flow map generated by $(\widetilde X,\Sigma)$ associated with some maximal time of existence $\tau>0$. Recalling the proof of Theorem \ref{thm:Cauchy-Lip} and the classical Cauchy--Lipschitz theorem in Euclidean setting, we know there exists a constant $\alpha>0$ such that 
  \begin{equation*}
    \tau\geq \alpha \min\left(\frac{\dist(\Sigma,\p U)}{\sup_{t\in[0,\infty)}\norm{X_t}_{L^\infty(U)}},\frac{1}{\sup_{t\in[0,\infty)} \norm{X_t}_{\mt{Lip}(\M)}}\right),
  \end{equation*}
where $\p U$ is the boundary of $U$. By \eqref{eqn:linear-bound} we then have
\begin{equation*}
    \tau\geq \alpha \min\left(\frac{\dist(\Sigma,\p U)}{C(1+\Diam(U))},\frac{1}{\sup_{t\in[0,\infty)} \norm{X_t}_{\mt{Lip}(\M)}}\right).
  \end{equation*}
Because $\M$ is unbounded, we see that we can always choose $U$ a posteriori so that $\tfrac{\dist(\Sigma,\p U)}{1+\Diam(U)} \geq 1$. Overall, this makes our lower bound on the maximal time of existence independent of $\Sigma$ and $U$. Applying a simple iterative argument extends our restricted local flow map to a unique global one, thus showing the desired result.

\subsection{Proof of Lemma \ref{lemma:d2-Hessian}}
\label{appendix:d2-Hessian}

The proof follows from \eqref{eqn:bHessian}. Indeed, we can apply \eqref{eqn:bHessian} since the bounds on the diameter of $U$ ensure that, given $z\in U$, the geodesic ball $B_r(z)$ contains $U$ and is contained in the set $\{x\in\M \st d(p,x) < 2r \}$, 
where the sectional curvature is bounded below by $\lambda$ and above by $\mu$, so that \eqref{eqn:K-bounds} holds on $B_r(z)$. Thus, \eqref{eqn:bHessian} yields
\begin{equation*}
  \norm{\Hess_v d_z^2(x)}_x \leq 2 \sqrt{-\lambda}\, d(x,z) \coth( \sqrt{-\lambda}\, d(x,z)) \|v\|_x \leq L \|v\|_x , 
\end{equation*}
for all $x,z\in U \subset B_r(z)$, where we used that the function $s\mapsto s \coth(s)$ is nondecreasing and the fact that the left-hand side in \eqref{eqn:bHessian} is nonnegative (when $\mu>0$, use $\sqrt{\mu}\, d(x,z) \leq \sqrt{\mu}\, \Delta \leq \frac{\pi}{2}$). Hence the Hessian $\Hess d_z^2$ is bounded by $L$ on $U$.

If $U$ is now convex, then \eqref{eqn:d2-Lipschitz} follows directly from Lemma \ref{lemma:bHessian} and \eqref{eqn:grad-d2}.

\subsection{Proof of Lemma \ref{lemma:Lipschitz-2}}
\label{appendix:Lipschitz-2}

For completeness, let us give the statement of \cite[Corollary 6.6.1]{Jost2017}, which lies at the core of the proof of Lemma \ref{lemma:Lipschitz-2}. Given a differentiable function $f\: M_1\to M_2$ between two smooth Riemannian manifolds $\M_1$ and $\M_2$, we write $D_x f \: T_x M_1 \to T_{f(x)}\M_2$ for the differential map of $f$ at $x$.

\begin{lem}
  \label{lem:bounds-exp}
  Let $p\in\M$ and $r<\iM$. Let there be a minimizing geodesic connecting two distinct points $x$ and $y$ in $B_r(p)$, with velocity $u$ at the point $x$. Write $\mu = \mu_r(p)$, and suppose that $\norm{u}_x \leq \frac{\pi}{\sqrt{\mu}}$ in case $\mu>0$, and let $w\in T_x\M$ be so that $\ap{w,u}_x = 0$. Then,
  \begin{equation}
    \label{eqn:bounds-exp}
    \norm{w}_x \frac{s_\mu (\norm{u}_x)}{\norm{u}_x} \leq \norm{D_{u}\exp_x(w)}_y,
  \end{equation}
  where $s_\mu$ is defined, for all $a\geq0$, by
  \begin{equation*}
    s_\mu(a) = \begin{cases} \frac{\sin(a\sqrt{\mu})}{\sqrt{\mu}} & \text{if $\mu > 0$,}\\ a & \text{if $\mu=0$.} \end{cases}
  \end{equation*}
\end{lem}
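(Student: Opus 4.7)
The plan is to identify $D_u\exp_x(w)$ with the endpoint value of a Jacobi field along the geodesic $\gamma(t)=\exp_x(tu)$, and then apply a Rauch-style comparison estimate (which is essentially the content of the cited \cite[Corollary~6.6.1]{Jost2017}) to bound that endpoint value from below by the corresponding quantity in a space of constant sectional curvature $\mu$.

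First I would introduce the one-parameter family of geodesics $\alpha(s,t):=\exp_x\bigl(t(u+sw)\bigr)$ for $s$ near $0$ and $t\in[0,1]$. Since each curve $t\mapsto\alpha(s,t)$ is a geodesic starting at $x$, the variational field $J(t):=\partial_s\alpha(0,t)$ is a Jacobi field along $\gamma$, with $J(0)=0$, $\conn_t J(0)=w$, and $J(1)=D_u\exp_x(w)$ by a direct computation. Because $\ap{w,u}_x=0$ and the map $t\mapsto \ap{J(t),\gamma'(t)}_{\gamma(t)}$ is affine (a standard consequence of the Jacobi equation and the symmetries of the curvature tensor), $J$ is everywhere orthogonal to $\gamma'$. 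Next I would rescale to unit speed: setting $v=u/\norm{u}_x$, $\sigma(s)=\exp_x(sv)$ on $[0,\norm{u}_x]$, and $\tilde J(s)=J(s/\norm{u}_x)$ produces a Jacobi field along the unit-speed geodesic $\sigma$, normal to $\sigma'$, with $\tilde J(0)=0$ and $\conn_s \tilde J(0)=w/\norm{u}_x$.

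The core step is then Rauch's comparison applied to $\tilde J$ along $\sigma$. The sectional curvature along $\sigma$ is bounded above by $\mu$, and the hypothesis $\norm{u}_x\leq \pi/\sqrt{\mu}$ (in the case $\mu>0$) guarantees that we are strictly before the first conjugate time of the model space of constant curvature $\mu$, where the comparison Jacobi field $s\mapsto s_\mu(s)$ is still positive. Rauch's inequality therefore yields
\begin{equation*}
\norm{\tilde J(s)}_{\sigma(s)} \;\geq\; \norm{\conn_s \tilde J(0)}_x\, s_\mu(s) \;=\; \frac{\norm{w}_x}{\norm{u}_x}\, s_\mu(s), \qquad s\in[0,\norm{u}_x].
\end{equation*}
Setting $s=\norm{u}_x$ and using $\tilde J(\norm{u}_x)=J(1)=D_u\exp_x(w)\in T_y\M$ gives the desired bound.

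The only subtlety I anticipate is ensuring that the upper bound $\mu=\mu_r(p)$ is actually valid along the entire geodesic $\gamma$, not merely at its endpoints: a minimizing geodesic between two points of $B_r(p)$ need not itself lie in $B_r(p)$. This is handled by the standard observation that a length-minimizing curve between points at distance less than $r$ from $p$ must stay in a ball around $p$ of radius less than $r$ on which $\mu_r(p)$ remains a valid upper bound for $\K$, so that Rauch's comparison can indeed be invoked uniformly along all of $\sigma$.
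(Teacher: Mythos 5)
Your core argument is correct and is the standard proof of this estimate: the paper itself gives no proof at all (the lemma is stated purely as a transcription of \cite[Corollary 6.6.1]{Jost2017} and used as a black box), and what you supply is precisely the Jacobi-field/Rauch argument that underlies that corollary. The identification $J(t)=\partial_s\exp_x\bigl(t(u+sw)\bigr)\vert_{s=0}$, with $J(0)=0$, $\conn_t J(0)=w$ and $J(1)=D_u\exp_x(w)$, the normality of $J$ coming from $\ap{w,u}_x=0$, the unit-speed rescaling, and the comparison with the constant-curvature-$\mu$ model up to the first conjugate time $\pi/\sqrt{\mu}$ are all fine (at $\norm{u}_x=\pi/\sqrt{\mu}$ the inequality is trivial since $s_\mu$ vanishes there).

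The genuine problem is in your final paragraph, where you address the only real subtlety and resolve it with a false claim. A minimizing geodesic between two points of the metric ball $B_r(p)$ need \emph{not} stay in a ball about $p$ of radius less than $r$: metric balls are in general not geodesically convex (this is exactly why the convexity radius is introduced in the paper's Appendix A), and your claim would in particular force the geodesic to remain in $B_r(p)$, which you had just correctly observed may fail. What the triangle inequality actually gives is radius $2r$: any point $z$ on the minimizing segment satisfies $\min\bigl(d(x,z),d(y,z)\bigr)\leq d(x,y)/2<r$, hence $d(p,z)<2r$. Consequently, your argument as written proves the estimate only with an upper curvature bound valid on $\{z\in\M \st d(p,z)<2r\}$, i.e.\ with $\mu_{2r}(p)$ in place of $\mu_r(p)$ (this enlargement is precisely the device the paper itself uses in Lemma \ref{lemma:d2-Hessian}, whose hypotheses invoke $\lambda_{2r}(p)$ and $\mu_{2r}(p)$ for $U\subset B_{r/2}(p)$). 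To obtain the statement literally as written you must instead assume that the curvature bound $\K\leq\mu$ holds along the geodesic $t\mapsto\exp_x(tu)$, $t\in[0,1]$ (which is the setting of Jost's formulation), or add the hypothesis that the minimizing geodesic stays in the region where $\mu_r(p)$ bounds $\K$; without one of these, the Rauch comparison cannot be invoked uniformly along $\sigma$ and the inequality with $\mu_r(p)$ is not justified.
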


Let us now prove Lemma \ref{lemma:Lipschitz-2}. The case $\mu=0$ is stated in \cite[Corollary 6.9.1]{Jost2017}. We give its proof here for completeness. Fix $x,y \in U$ with $x\neq y$. Because $U$ is a totally normal neighborhood there exists only one minimizing geodesic linking $x$ to $y$ with velocity $\log_xy$ at the origin, and the exponential map $\exp_x$ is a diffeomorphism from $T_x\M$ to $U$. Then, we can let $v \in T_y \M$ and apply Lemma \ref{lem:bounds-exp} with $w = D_y \log_x (v)$ and $u = \log_x y$. Note that since $\log_x \:\M \to T_x \M$, we have $D_y \log_x(v) \in T_{\log_x y} T_x \M \cong T_x \M$. Also, up to a rotation, we can assume $\ap{\log_x y,D_y \log_x(v)}_x = 0$. By \eqref{eqn:bounds-exp}, we then get
\begin{equation}
\label{eqn:w-ineq}
\| w \|_x \leq \| D_{\log_x y} \exp_x (w)\|_y = \| D_{\log_x y} \exp_x (D_y \log_x (v))\|_y = \|v\|_y,
\end{equation}
where for the last equality we used $\exp_x \circ \log_x$ is the identity map on $U$. We give the obtained result as:
\begin{equation}
\label{eqn:Dlog-ineq}
{\| D_y \log_x (v) \|}_x \leq {\| v \|}_y, \qquad \text{ for all } x,y \in \M, v \in T_y \M.
\end{equation}
Now fix $x,y,z \in U$, and denote by $\gamma$ the constant-speed geodesic between $y$ and $z$. Then,
\begin{align}
\log_x z - \log_x y &= \int_0^1 \frac{\der}{\der t} \log_x \gamma(t)\d t \nonumber \\
& =  \int_0^1 D_{\gamma(t)} \log_x (\gamma'(t)) \d t. \label{eqn:log-diff-ftc}
\end{align}
By applying \eqref{eqn:Dlog-ineq} in the integral above for $y=\gamma(t)$ and $v = \gamma'(t)$, we get from \eqref{eqn:log-diff-ftc}:
\begin{align}
{\| \log_x z - \log_x y \|}_x &\leq \int_0^1 {\| D_{\gamma(t)} \log_x (\gamma'(t)) \|}_x \d t \nonumber \\[2pt]
&\leq \int_0^1 \| \gamma'(t) \|_{\gamma(t)} \d t, \label{eqn:log-diff}
\end{align}
which leads to \eqref{eqn:log-diff-mu0} given that $\|\gamma'(t)\| = d(y,z)$ for all $t\in[0,1]$.
  
Let us turn to the proof for the case $\mu>0$, which is again based on Lemma \ref{lem:bounds-exp}. Fix $x,y \in U$ with $x\neq y$. Similar to the $\mu=0$ case, there exists only one minimizing geodesic joining $x$ and $y$ with velocity $\log_xy$ at $x$, and the exponential map $\exp_x$ is a diffeomorphism from $T_x\M$ to $U$. By the assumption on the diameter of $U$ we have $d(x,y) \leq \frac{\pi -\e}{\sqrt{\mu}} < \frac{\pi}{\sqrt{\mu}}$, and hence we can then let $v \in T_y \M$ and apply \eqref{eqn:bounds-exp} for $w = D_y \log_x (v)$ and $u = \log_x y$, to get
\begin{equation}
\label{eqn:w-ineq-mu}
\| w \|_x \leq \frac{\sqrt{\mu}\|\log_x y\|_x}{\sin (\sqrt{\mu}\|\log_x y\|_x)} {\| D_{\log_x y} \exp_x (w)\|}_y =  \frac{\sqrt{\mu}\|\log_x y\|_x}{\sin (\sqrt{\mu} \|\log_x y\|_x)} \|v\|_y,
\end{equation}
and hence,
\begin{equation}
\| D_y \log_x (v) \|_x \leq  \frac{\sqrt{\mu}\| \log_x y\|}{\sin (\sqrt{\mu}\|\log_x y\|)} \| v \|_y.
\end{equation}
Since the function $\tau \mapsto\frac{\tau}{\sin \tau}$ is nondecreasing on $[0,\pi - \e]$, we get from above that
\begin{equation}
\label{eqn:Dlog-ineq-mu}
\| D_y \log_x (v) \|_x \leq  \frac{\pi - \e}{\sin(\pi - \e)} \| v \|_y, \qquad \text{ for all } x,y \in U, \text{ and } v \in T_y \M.
\end{equation}
Now fix $x,y,z \in U$ and take $\gamma$ the constant-speed geodesic between $y$ and $z$. Restarting from \eqref{eqn:log-diff-ftc}, equation \eqref{eqn:Dlog-ineq-mu} yields (see also \eqref{eqn:log-diff})
\[
\| \log_x z - \log_x y \|_x \leq  \frac{\pi - \e}{\sin(\pi - \e)} \int_0^1 \| \gamma'(t) \|_{\gamma(t)} \d t.
\]
The Lipschitz estimate \eqref{eqn:log-diff-mu} now follows from above, using that $\| \gamma'(t)\| = d(y,z)$ for all $t \in [0,1]$. Note that the Lipschitz constant approaches $\infty$ as $\e \to 0$. 

\bigskip
\subsubsection*{Acknowledgements}
The authors thank Xavier Pennec for his time and insightful discussions on the topic of bounded curvature.

\bibliographystyle{abbrv}

\def\cprime{$'$}

\end{document}